\crefname{equation}{}{}
\crefname{enumi}{}{}
\Crefname{lem}{Lemma}{Lemmas}
\Crefname{prop}{Proposition}{Propositions}
\Crefname{thm}{Theorem}{Theorems}
\Crefname{cor}{Corollary}{Corollaries}
\Crefname{rem}{Remark}{Remarks}
\crefname{enumi}{}{}
\setlist[enumerate]{label=(\arabic{enumi})} 
\theoremstyle{plain}
\newtheorem{thm}{Theorem}[section]
\newtheorem{lem}[thm]{Lemma}
\newtheorem{cor}[thm]{Corollary}
\newtheorem{prop}[thm]{Proposition}
\theoremstyle{definition}
\newtheorem{df}[thm]{Definition}
\newtheorem{eg}[thm]{Example}
\newtheorem{conv}[thm]{Convention}
\theoremstyle{remark}
\newtheorem{rem}[thm]{Remark}
\numberwithin{equation}{section}
\newcommand{\R}{\mathbb{R}}
\newcommand{\N}{\mathbb{N}}
\newcommand{\Z}{\mathbb{Z}}
\newcommand{\mr}[2][\Z]{\sigma_{#1}#2}
\newcommand{\Ab}{{\sf Ab}}
\newcommand{\Hom}{{\sf Hom}}
\newcommand{\Mod}{{\sf Mod}}
\newcommand{\Set}{{\sf Set}}
\newcommand{\Met}{{\sf Met}}
\newcommand{\Tor}{{\sf Tor}}
\newcommand{\wt}{\widetilde}
\newcommand{\del}{\partial}
\newcommand{\too}{\longrightarrow}
\newcommand{\lmult}[1]{L_{#1}}
\newcommand{\rmult}[1]{R_{#1}}
\newcommand{\dcpx}{D}
\newcommand{\partialh}{\partial^{\mathrm h}}
\newcommand{\partialv}{\partial^{\mathrm v}}
\renewcommand{\ker}{\operatorname{Ker}}
\DeclareMathOperator{\im}{Im}
\DeclareMathOperator{\tot}{Tot}
\DeclareMathOperator{\MC}{\sf MC}
\DeclareMathOperator{\MResol}{\sf MR}
\DeclareMathOperator{\MH}{\sf MH}
\DeclareMathOperator{\id}{id}
\DeclareMathOperator{\rank}{rank}
\title[Minimal projective resolution and magnitude homology]{%
Minimal projective resolution and magnitude homology of geodetic metric spaces
}
\author[Y. Asao]{Yasuhiko Asao}
\address{%
Faculty of Applied Mathematics, Fukuoka University, Nanakuma, Jonan-ku, Fukuoka, Fukuoka 814-0180, Japan}
\email{asao@fukuoka-u.ac.jp}
\author[S. Wakatsuki]{Shun Wakatsuki}
\address{%
  Graduate School of Mathematics, Nagoya University,
  Furo-cho, Chikusa-ku, Nagoya, Aichi 464-8601, Japan
}
\email{shun.wakatsuki@math.nagoya-u.ac.jp}
\subjclass[2020]{55N35, 05C31, 51F99}
\keywords{magnitude homology, minimal projective resolution, geodetic metric space, Moore graph}
\thanks{%
  The first author was supported by JSPS KAKENHI Grant Number 24K16927. The second author was supported by JSPS KAKENHI Grant Number JP23K19006.
}
\begin{document}

\begin{abstract}
In \cite{AI}, magnitude homology is described as a $\Tor$ functor, hence we can compute it by giving a projective resolution of a certain module. In this article, we compute magnitude homology by constructing a minimal projective resolution. As a consequence, we determine magnitude homology of geodetic metric spaces. We show that it is a free $\Z$-module, and give a recursive algorithm for constructing all cycles. As a corollary, we show that a finite geodetic metric space is diagonal if and only if it contains no 4-cuts. Moreover, we give explicit computations for cycle graphs, Petersen graph, Hoffman-Singleton graph, and a missing Moore graph. It includes another approach to the computation for cycle graphs, which has been studied by Hepworth--Willerton (\cite{HW}) and Gu (\cite{Gu}).
\end{abstract}

\maketitle
\setcounter{tocdepth}{1} 
\tableofcontents


\section{Introduction}
{\it Magnitude homology} $\MH^\ell_\ast(X)$ of a (quasi) metric space $X = (X, d)$ is introduced by Hepworth--Willerton (\cite{HW}) and Leinster-Shulman (\cite{LS}) as a categorification of the magnitude that is introduced by Leinster in 2000's.
It is a bigraded module defined as a homology of the {\it magnitude chain complex} $\MC^\ell_\ast(X)$ whose $n$-th component $\MC^\ell_n(X)$ is the free abelian group generated by tuples $(x_0, \dots, x_n) \in X^{n+1}$ with length $\sum_{i=0}^{n-1}d(x_i, x_{i+1}) = \ell$.

In \cite{AI}, the first author and Ivanov showed that magnitude homology can be described as a derived functor.
When $X$ consists of finitely many points, it is  $\Tor_{\mr{X}}(\Z^{|X|}, \Z^{|X|})$, where $\mr{X}$ is a non-commutative graded ring constructed by using the metric space structure of $X$, and the $\Tor$ functor is taken in the abelian category of graded modules over the graded ring $\mr{X}$.
Namely, we can obtain a chain complex $P_\ast\otimes_{\mr{X}} \Z^{|X|}$ that is homotopy equivalent to $\MC^\ast_\ast(X)$ by giving a projective resolution $P_\ast$ of a $\mr{X}$-module $\Z^{|X|}$.

In this article, we compute magnitude homology by using a {\it minimal projective resolution}.
The idea of minimal projective resolution is often used in a field of representation theory.
It is a projective resolution of a module which is
minimal among others with respect to inclusions of complexes.
This can be applied to explicit computation of magnitude homology
since the Jacobson radical of \(\mr{X}\) has a clear description (\cref{rx}).

For a technical reason, we don't need to work with graded modules, so we study things by using usual homological algebra (\cref{gradrem}).  We construct a minimal projective resolution $P_\ast$ of $\Z^{|X|}$ for a finite geodetic quasi metric space $X$ (\cref{mprgeod}). The geodeticity assumption, a discrete analogue of `uniquely geodesic property', has good compatibility with the construction of the resolution. Also, the minimality causes a triviality of all the boundary operators $\del_\ast$ of $P_\ast\otimes_{\mr{X}} \Z^{|X|}$. By describing $P_\ast$ as modules generated by some of tuples $(x_0, \dots, x_n) \in X^{n+1}$ (\cref{thetap}),  we obtain the following. Here we denote the condition $d(x, y) + d(y, z) = d(x, z)$ by $x \leq y \leq z$.

\begin{thm}[\cref{grad}]\label{thm1}
    Let $X$ be a geodetic quasi metric space.
    Then, for all $\ell>0, n>0$, the magnitude homology $\MH_n^\ell(X)$ is a free module whose basis is the set of tuples $(x_0, \dots, x_n) \in X^{n+1}$ satisfying the following.
    \begin{enumerate}
    \item $\sum_{i=0}^{n-1}d(x_i, x_{i+1})=\ell$.
    \item $\lnot(x_{i-1}\leq x_{i} \leq x_{i+1})$ for $1\leq i \leq n-1$.
    \item $x_{0}\leq a \leq  x_{1}$ implies that $a = x_0$ or $a = x_1$.
    \item $x_{i}\leq a \leq  x_{i+1}$ and $a \neq x_{i+1}$ implies that $x_{i-1}\leq x_{i} \leq a$ for $1\leq i \leq n-1$.
    \end{enumerate}
\end{thm}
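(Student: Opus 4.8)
The plan is to read the statement off the minimal projective resolution $P_\ast$ of the $\mr{X}$-module $\Z^{|X|}$ that is constructed in \cref{mprgeod} and described in terms of tuples in \cref{thetap}. By the main result of \cite{AI}, the magnitude homology, being the homology of $\MC^\ast_\ast(X)$, is computed by the complex $P_\ast\otimes_{\mr{X}}\Z^{|X|}$, bigraded so that homological degree records $n$ and internal degree records the length $\ell$. Thus it suffices to prove two things: that every differential of $P_\ast\otimes_{\mr{X}}\Z^{|X|}$ vanishes, and that the $\Z$-basis of its internal degree-$\ell$ part in homological degree $n$ is exactly the set of tuples described by conditions (1)--(4).

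The first point is the standard consequence of minimality. By construction the differentials of a minimal resolution have image inside $J\cdot P_{\ast-1}$, where $J\subset\mr{X}$ is the radical whose description is recalled in \cref{rx}; since $\Z^{|X|}\cong\mr{X}/J$ is annihilated by $J$, every differential of $P_\ast\otimes_{\mr{X}}\Z^{|X|}$ is zero. Hence
\[
\MH^\ell_n(X)\;\cong\;\bigl(P_n\otimes_{\mr{X}}\Z^{|X|}\bigr)_\ell ,
\]
and this is a free $\Z$-module because $P_n$ is a free $\mr{X}$-module and $\mr{X}\otimes_{\mr{X}}\Z^{|X|}\cong\Z^{|X|}$ is $\Z$-free; this already settles the freeness assertion.

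For the second point I would translate the description of \cref{thetap}. There $P_n$ is identified with the free $\mr{X}$-module on a set of generating tuples $(x_0,\dots,x_n)\in X^{n+1}$, so after tensoring with $\Z^{|X|}$ there remains exactly one $\Z$-generator per generating tuple, sitting in internal degree $\sum_{i=0}^{n-1}d(x_i,x_{i+1})$; this is condition (1). What is left is to check that the indexing set produced by the recursive construction of \cref{mprgeod}, as repackaged by \cref{thetap}, coincides with the set cut out by conditions (2)--(4): condition (2) says the tuple carries no inessential interior vertex, so that every face map $\del_i$ ($1\le i\le n-1$) annihilates it and it already represents a class; conditions (3) and (4) are the geodeticity-driven constraints that determine which of these classes actually appear as generators of the minimal resolution, pinning down the only way a vertex may fail to be extremal between its neighbours in terms of the previous step. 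This verification amounts to unwinding the inductive definition of the generators term by term, using uniqueness of geodesics in $X$.

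The main obstacle is precisely this matching step. Everything homological --- the $\Tor$ description, the passage from minimality to vanishing differentials, and $\Z$-freeness --- is either quoted from \cite{AI} or immediate from the defining property of a minimal resolution. The real content lies in the combinatorics of geodetic spaces, and that has already been carried in \cref{mprgeod} and \cref{thetap}; once those are available the present theorem is close to a formal corollary, the only care needed being to reconcile the closed-form conditions (1)--(4) with the recursive description of the generators.
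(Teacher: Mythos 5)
Your outline matches the paper's strategy for the \emph{finite} case: use the minimal resolution from \cref{mprgeod} and \cref{thetap}, observe that minimality makes every differential of $P_\ast\otimes_{\mr{X}}\Z^{|X|}$ vanish because $\operatorname{im}\varphi_n\subset P_{n-1}\,\mathrm{rad}(\mr{X})$, and read off the basis from the generating tuples. That is exactly \cref{zerodiff} and the proof of \cref{nongrad} in the paper, with the grading handled as in \cref{gradrem} by decomposing the set $\Theta_n(\beta(X^2_{f+}))$ into its length-homogeneous parts.

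There is, however, a genuine gap: the theorem is stated for an arbitrary geodetic quasi metric space, but the entire homological apparatus you invoke only exists when $X$ is \emph{finite}. When $|X|=\infty$, the ring $\mr{X}$ has no unit (the paper points this out explicitly after defining $\mr{X}$), so there is no category of $\mr{X}$-modules in the usual sense, no $\Tor_{\mr{X}}(\Z^{|X|},\Z^{|X|})$, and \cref{mprgeod} does not apply. The paper closes this gap via \cref{prescol}: the functor $\MH^\ell_n\colon\Met^{\sf inc}\to\Ab$ preserves filtered colimits, and any geodetic quasi metric space is a filtered colimit of its finite quasi metric subspaces, which are again geodetic by \cref{subgeod}. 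One then checks that the set $\Theta_n^\ell(\beta(X^2_{f+}))$ itself is compatible with this colimit, so the conclusion for finite $X$ passes to the limit. Without this step your argument establishes the theorem only for finite $X$.

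Two smaller points worth being precise about. First, to get the basis as literal tuples in $\MC^\ell_n(X)$ (rather than abstract classes), you need the observation from \cref{thetap} and the proof of \cref{nongrad} that $\Z\Theta'_\ast$ is a \emph{subcomplex} of the bar resolution $\Z X^{\ast+2}_f$, so that after applying $-\otimes_{\mr{X}}\Z^{|X|}$ the induced map is an inclusion $\Z\Theta_n(\beta(X^2_{f+}))\hookrightarrow\wt{\MC}_n(X)$ that is a quasi-isomorphism by the fundamental theorem of homological algebra; it is this inclusion that identifies the generators with actual chains. Second, the matching of conditions (1)--(4) with the recursive generating set is not a formality so much as the content of \cref{theta1} and \cref{theta3}, which establish $(\beta\kappa)^nS\cong\Theta_{n+1}(S)$; it is reasonable to cite these, but the claim ``this verification amounts to unwinding the inductive definition'' should be anchored to that equivalence.
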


Here the finiteness condition for $X$, which was necessary to consider $\Tor_{\mr{X}}(\Z^{|X|}, \Z^{|X|})$, is dropped because of the filtered colimit preservation property of magnitude homology (\cref{prescol}).

In the context of previous research on magnitude homology, the above \cref{thm1} carries significant importance as it overcomes difficulties related to  {\it 4-cuts} (See \cref{df4cut} for 4-cuts). In fact, \cref{thm1} can be considered as a development of Kaneta and Yoshinaga's result in \cite{KY} (\cref{KYrem}).
In that paper, they compute magnitude homology $\MH^\ell_\ast(X)$ of a geodetic metric space $X$ in the range $0 < \ell < m_X$, where $m_X$ is the infimum of the length of 4-cuts. They describe magnitude homology by using the notion of {\it thin frame} that is equivalent to the conditions (1)--(4) above under the assumption $0 < \ell < m_X$ (\cref{thintheta}). Also, we can drop the 4-cut-free assumption in Jubin's triviality theorem for Menger convex metric spaces (\cref{menger}).

Next we discuss the {\it diagonality} of metric spaces that is an intriguing property in magnitude theory. From \cref{thm1}, we obtain the following criterion for the  diagonality.
\begin{thm}[\cref{dcritf}]\label{thm2}
    Let $X$ be a finite geodetic quasi metric space. Then the following are equivalent.
    \begin{enumerate}
        \item $X$ is diagonal.
        \item $X$ is 2-diagonal.
        \item There is no 4-cut in $X$.
    \end{enumerate}

\end{thm}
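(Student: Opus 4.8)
The plan is to prove the cycle of implications $(1)\Rightarrow(2)\Rightarrow(3)\Rightarrow(1)$, with \cref{thm1} as the essential input. The implication $(1)\Rightarrow(2)$ is immediate: $2$-diagonality ($\MH_2^\ell(X)=0$ for $\ell\ne 2$) is the case $n=2$ of diagonality ($\MH_n^\ell(X)=0$ for $n\ne\ell$).

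For $(3)\Rightarrow(1)$ I argue by contraposition. Suppose $X$ is not diagonal. Since $\MH_n^\ell(X)=0$ for $\ell<n$, there are $n<\ell$ and, by \cref{thm1}, a tuple $(x_0,\dots,x_n)$ satisfying conditions (1)--(4); in particular $d(x_i,x_{i+1})\ge 2$ for some $i$. If $i=0$, a geodesic from $x_0$ to $x_1$ contains an interior point $a$ with $x_0\le a\le x_1$ and $a\notin\{x_0,x_1\}$, contradicting (3); so the long edge occurs at some $1\le i\le n-1$. Choosing an interior point $a$ of a geodesic from $x_i$ to $x_{i+1}$ (so $x_i\le a\le x_{i+1}$, $a\ne x_i$, $a\ne x_{i+1}$), condition (4) gives $x_{i-1}\le x_i\le a$ while condition (2) gives $\lnot(x_{i-1}\le x_i\le x_{i+1})$. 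A quick inspection of the (quasi-)metric inequalities shows that $x_{i-1},x_i,a,x_{i+1}$ are pairwise distinct, so these four points form a $4$-cut (\cref{df4cut}); hence $\lnot(3)$.

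For $(2)\Rightarrow(3)$ I again use contraposition, now exhibiting a nonzero class in $\MH_2^\ell(X)$ with $\ell\ge 3$. Suppose $X$ contains a $4$-cut. Using the finiteness of $X$, take a $4$-cut that is minimal with respect to a suitable ordering (for instance lexicographically in total length and then in the length of its initial edge), with underlying points $p\le q\le r$, $q\le r\le s$, $\lnot(p\le q\le s)$. I claim that the triple $(p,q,s)$ is a basis element of $\MH_2^\ell(X)$ for $\ell=d(p,q)+d(q,s)$. Indeed, since $q\le r\le s$ with $r\notin\{q,s\}$ we have $d(q,s)\ge 2$, so $\ell\ge 3$; condition (1) holds by the definition of $\ell$; condition (2) is precisely $\lnot(p\le q\le s)$; condition (3) asserts that no point lies strictly between $p$ and $q$, which follows from the minimality of $d(p,q)$; and condition (4)---that every $a$ with $q\le a\le s$ and $a\ne s$ satisfies $p\le q\le a$---follows from minimality of the $4$-cut. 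Then \cref{thm1} gives $\MH_2^\ell(X)\ne 0$, so $X$ is not $2$-diagonal.

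The step I expect to be the main obstacle is the verification of condition (4) for a minimal $4$-cut: one must argue that any $a$ with $q\le a\le s$, $a\ne s$ and $\lnot(p\le q\le a)$ can be combined with a suitable point between $q$ and $a$ to produce a $4$-cut of strictly smaller total length, contradicting minimality. Getting this to work requires choosing the minimality order and the auxiliary intermediate point compatibly, and propagating the relation $\le$ through geodesics with some care. The remaining verifications are routine consequences of geodeticity and the definitions.
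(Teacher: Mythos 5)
Your overall plan — the cycle $(1)\Rightarrow(2)\Rightarrow(3)\Rightarrow(1)$, with $(3)\Rightarrow(1)$ read off from \cref{thm1} and $(2)\Rightarrow(3)$ by exhibiting a bad class in $\MH_2$ — is the same as the paper's. The implications $(1)\Rightarrow(2)$ and $(3)\Rightarrow(1)$ are fine (though for general quasi metric spaces you should phrase diagonality as in \cite{BK}, ``every cycle is a combination of saturated tuples,'' rather than ``$\MH^\ell_n=0$ for $\ell\ne n$''; for $(3)\Rightarrow(1)$ the point is that absence of 4-cuts makes every $\Theta$-tuple a thin frame and hence saturated, and for the converse direction you do not need geodesics or $d\ge 2$ — a non-saturated pair directly hands you the $a$ you need and conditions (2) and (4) of \cref{thm1} then make $(x_{i-1},x_i,a,x_{i+1})$ a 4-cut).

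The gap is in $(2)\Rightarrow(3)$, and it is genuine. You take a 4-cut minimal for ``total length, then initial edge'' and want to show $(p,q,s)$ is a basis element. Even your verification of condition (3) is not immediate as stated: if $p\le c\le q$ with $c\ne p,q$, the tuple $(c,q,r,s)$ need not be a 4-cut (you cannot conclude $\lnot(c\le q\le s)$ from $\lnot(p\le q\le s)$), and the tuple $(p,c,q,s)$ has the \emph{same} total length as $(p,q,r,s)$, so you must compare at the second lexicographic level, and then still need an extra application of a lemma like \cref{metlem} to reach a contradiction. For condition (4) — exactly the step you flag — the situation is worse: given $q\le a\le s$, $a\ne s$, $\lnot(p\le q\le a)$, the tuple $(p,q,a,s)$ is not a 4-cut (it fails $p\le q\le a$), and no single substitution into $(p,q,r,s)$ produces a shorter 4-cut without further work. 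What is needed is essentially the content of the paper's \cref{4cutlem}: that $\{a\mid p\le q\le a,\ z\le a\le s\}$ is a down-closed (hence initial) segment of the totally ordered set $I_X(z,s)$. The paper then splits the argument into two lemmas: \cref{4cut2} first replaces the given 4-cut by one whose first pair is saturated, by descending along $I_X(x,y)$ to the first $s_k$ with $s_k\le z\le w$; \cref{4cut1} then takes the first $t_{k+1}\in I_X(z,w)$ outside the down-closed set and checks — via several applications of \cref{metlem} and the geodeticity dichotomy $y\le a\le z$ or $y\le z\le a$ — that $(x,y,t_{k+1})\in\Theta_2$. A ``minimal 4-cut'' approach like yours can likely be made to work, but only after choosing the ordering to control both $d(p,q)$ and $d(q,s)$ and carrying out computations of the same size as the paper's; as written, the decisive step is asserted rather than proved.
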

Here we adopt the definition of the diagonality of a quasi metric space due to Bottinelli and Kaiser (\cite{BK}), and {\it $n$-diagonality} is a restriction of the definition of diagonality to $\MH^\ell_n$ for a specific $n\geq0$. Since the diagonality shows simplicity of the magnitude homology, \cref{thm2} can explain why the existence of a 4-cut caused difficulty for computing magnitude homology in \cite{KY}. Interestingly, we cannot drop the finiteness condition in \cref{thm2} (\cref{infremark}).

As another application of \cref{thm1}, we compute magnitude homology of Moore graphs, namely odd cycle graphs, Petersen graph, Hoffman--Singleton graph, and a missing Moore graph (Examples \ref{egodd}--\ref{egmoore}).
Here we consider graphs as metric spaces by the shortest path metric, and note that the above graphs are all geodetic as metric spaces.
It contains the previous studies for odd cycle graphs $C_{2m+1}$--- Hepworth and Willerton conjectured the rank of $\MH^\ell_n(C_{2m+1})$ in \cite{HW}, and Gu proved it by using algebraic Morse theory in \cite{Gu}.
In particular, Gu showed that the rank is determined by a recurrence formula.
Here we show that magnitude homology of Moore graphs are torsion free and  their ranks are determined by similar recurrence formulas generalizing one for odd cycle graph case.
Moreover, we give explicit description of all cycles of them.

\begin{thm}[\cref{compute}, \cref{rem:moore_cycle}]
    Let $G$ be a Moore graph with degree $D$ and diameter $m > 1$. We denote the number of vertices by $N$. Then the magnitude homology $\MH^\ell_n(G)$ is a free $\Z$-module whose rank $R(n, \ell)$ is determined by the recurrence formula
    \[
    R(n, \ell) = R(n-1, \ell-1) + D(D-1)^mR(n-2, \ell - m-1),
    \]
    with the initial condition $R(0, 0) = N, R(1, 1) = ND$.
    Explicitly, we have $\MH^\ell_n(G)=0$ except for the case that \((n,\ell) = (2i+j, (m+1)i+j)\) for some \(i,j\ge 0\). In this case, we have
\begin{equation*}
R(2i+j, (m+1)i+j) =  N\left(D(D-1)^m\right)^i\left({i+j-1 \choose i-1} + D {i+j-1 \choose i}\right),
\end{equation*}
where \(s \choose t\) is the binomial coefficient and
  we define \({-1 \choose -1} = 1\) and \({s \choose t} = 0\) if \(s < t\) or \(t<0\leq s\).
  The basis of $\MH^{(m+1)i+j}_{2i+j}(G)$ are tuples $(x_0, \dots, x_{2i+j})$ described as follows :
\begin{enumerate}
    \item $d(x_0, x_1) = 1, d(x_k, x_{k+1}) \in \{1, m\}$ for $1 \leq k \leq 2i+j-1$ and $d(x_k, x_{k+1}) = m$, and $k$ with $d(x_k, x_{k+1}) = m$ appears exactly $i$ times,
    \item $d(x_{k-1}, x_k) = d(x_k, x_{k+1}) = 1$ implies $x_{k-1} = x_{k+1}$,
    \item $d(x_k, x_{k+1}) = m$ implies $d(x_{k-1}, x_k) = 1 $ and $\neg(x_k \leq x_{k-1}\leq x_{k+1})$,
\end{enumerate}
\end{thm}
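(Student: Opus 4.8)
The plan is to apply \cref{thm1} to $G$ and then to translate, and count, the basis it provides. Recall that $G$ has girth $2m+1$, so any two of its vertices are joined by a unique geodesic --- a second one would close up a cycle of length at most $2m$ --- and consequently $G$, with the shortest-path metric, is geodetic; thus \cref{thm1} applies and already gives that $\MH^\ell_n(G)$ is a free $\Z$-module (in particular torsion free) with basis the set of tuples satisfying conditions (1)--(4) there. I will freely use that $G$ is distance regular with intersection numbers $c_i=1$ for $1\le i\le m$, $a_i=0$ for $1\le i\le m-1$, $a_m=D-1$, and $b_i=D-1$ for $1\le i\le m-1$ --- equivalently, girth $2m+1$ together with the fact that every vertex has exactly $D(D-1)^{k-1}$ vertices at distance $k$. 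Since only distance regularity is used, nothing below relies on vertex transitivity, and the argument also covers the conjectural degree-$57$ Moore graph.

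The first and main task is to show that, for $G$, conditions (1)--(4) of \cref{thm1} are equivalent to conditions (1)--(3) in the present statement. Non-degeneracy of magnitude chains together with condition (3) of \cref{thm1} forces $d(x_0,x_1)=1$. I would then prove by induction on $k$ that $d(x_k,x_{k+1})\in\{1,m\}$: assuming $d(x_{k-1},x_k)\in\{1,m\}$, suppose $2\le d:=d(x_k,x_{k+1})\le m-1$; applying condition (4) of \cref{thm1} along the unique geodesic $x_k=y_0,y_1,\dots,y_d=x_{k+1}$ gives $d(x_{k-1},y_s)=d(x_{k-1},x_k)+s$ for $0\le s\le d-1$, so the case $s=d-1$ forces $d(x_{k-1},x_k)\le m-d+1<m$, hence $d(x_{k-1},x_k)=1$ and the geodesic departs monotonically from $x_{k-1}$. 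Condition (2) of \cref{thm1} then pins $d(x_{k-1},x_{k+1})$ to be $d-1$ or $d$; in the first case $y_{d-1}$ is a vertex at distance $d<m$ from $x_{k-1}$ having two neighbours --- $y_{d-2}$ and $x_{k+1}$ --- closer to $x_{k-1}$, contradicting $c_d=1$, and in the second case $y_{d-1}$ has the neighbour $x_{k+1}$ at the same distance $d<m$ from $x_{k-1}$, contradicting $a_d=0$. This step is the one I expect to be the main obstacle. Once all step lengths lie in $\{1,m\}$: condition (2) of \cref{thm1} at an index flanked by two length-$1$ steps gives $x_{k-1}=x_{k+1}$ (since $G$ has no triangles), which is condition (2) of the statement; and condition (4) of \cref{thm1} at the index of a length-$m$ step forces the preceding step to have length $1$ (the bound argument above with $d=m$) and, combined with $c_m=1$, forces $d(x_{k-1},x_{k+1})=m$, i.e.\ $\lnot(x_k\le x_{k-1}\le x_{k+1})$ --- condition (3) of the statement. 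The converse direction (that conditions (1)--(3) of the statement imply (1)--(4) of \cref{thm1}) is an easier induction of the same type, reconstructing the monotone departure of a geodesic following a length-$m$ step from $c_i=1$, $a_i=0$ and the absence of triangles. This identification also yields the vanishing claim: such a tuple with $i$ steps of length $m$ has $\ell=(n-i)+im$, and since each length-$m$ step sits at a position $\ge 2$ whose immediate predecessor step --- a distinct position carrying no length-$m$ step --- has length $1$, we get $2i\le n$, so $(n,\ell)=(2i+j,(m+1)i+j)$ with $j:=n-2i\ge 0$.

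The second task is to establish the recurrence for $R(n,\ell)=\rk\MH^\ell_n(G)$. Deleting the last vertex of a basis tuple of $\MH^\ell_n(G)$ produces a basis tuple of $\MH^{\ell-1}_{n-1}(G)$ when the last step had length $1$, and a basis tuple of $\MH^{\ell-m}_{n-1}(G)$ ending in a length-$1$ step when it had length $m$. Conversely, by distance regularity the number of valid one-vertex extensions depends only on the length of the current last step: extending a length-$1$ step by a length-$1$ step forces $x_n=x_{n-2}$ (one choice, using that $G$ has no triangles); extending a length-$m$ step by a length-$1$ step is unconstrained ($D$ choices); and extending a length-$1$ step by a length-$m$ step --- the only way a length-$m$ step can arise --- admits exactly $D(D-1)^{m-1}-(D-1)^{m-1}=(D-1)^m$ choices (the vertices at distance $m$ from $x_{n-1}$ whose geodesic from $x_{n-1}$ does not begin towards $x_{n-2}$). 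Writing $S(n,\ell)$ and $L(n,\ell)$ for the numbers of basis tuples whose last step has length $1$, respectively $m$, this gives $S(n,\ell)=S(n-1,\ell-1)+D\,L(n-1,\ell-1)$ and $L(n,\ell)=(D-1)^m S(n-1,\ell-m)$, and substituting and adding shows $R=S+L$ satisfies
\[
R(n,\ell)=R(n-1,\ell-1)+D(D-1)^m R(n-2,\ell-m-1),
\]
while $n\le 1$ gives directly $R(0,0)=N$ and $R(1,1)=ND$ (the latter using condition (3) of \cref{thm1}). Since these extension counts depend only on the last step's length, and the first step has $D$ choices regardless of $x_0$, the number of basis tuples with a prescribed initial vertex is independent of that vertex, so $R(n,\ell)$ equals $N$ times it throughout.

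It remains to check that the closed form
\[
R(2i+j,(m+1)i+j)=N\bigl(D(D-1)^m\bigr)^i\Bigl(\binom{i+j-1}{i-1}+D\binom{i+j-1}{i}\Bigr)
\]
solves this recurrence with these initial data, which then determines $R$ on the whole quarter-plane. Dividing by $N(D(D-1)^m)^i$, the inductive step ($i,j\ge 1$) reduces to Pascal's rule in the form
\[
\binom{i+j-1}{i-1}+D\binom{i+j-1}{i}=\Bigl(\binom{i+j-2}{i-1}+\binom{i+j-2}{i-2}\Bigr)+D\Bigl(\binom{i+j-2}{i}+\binom{i+j-2}{i-1}\Bigr);
\]
the boundary cases $i=0$ (yielding $N$ at $(n,\ell)=(0,0)$ via the convention $\binom{-1}{-1}=1$, and $ND$ at $(j,j)$ for $j\ge 1$) and $j=0$ (where the recurrence reduces to $R(2i,(m+1)i)=D(D-1)^m R(2(i-1),(m+1)(i-1))$ since $R(2i-1,(m+1)i-1)=0$, giving $N(D(D-1)^m)^i$ by induction on $i$) are checked directly, together with the vanishing of $R$ off the stated locus, already obtained above. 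Combined with the basis description from the first task --- which is exactly conditions (1)--(3) of the statement --- this completes the proof.
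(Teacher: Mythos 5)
Your proof is correct, and it follows the paper's overall strategy: reduce to \cref{grad} via the observation that girth $2m+1$ forces geodeticity, identify the basis as ``back-and-forth / distance-$m$'' tuples, derive the recurrence by counting one-step extensions, and verify the closed form. The difference is one of order and mechanism. The paper derives the recurrence first and algebraically, by applying \cref{rec} twice and evaluating $\beta\kappa(e_{ij})$ (for $d(i,j)=1$) and $\beta\kappa(e_{jk})$ (for $d(j,k)=m$); the geometric basis description and the split by the last step's length then appear in \cref{rem:moore_cycle} as a corollary of that count. You make the basis description primary: you translate conditions (2)--(4) of \cref{grad} directly into the step-length constraints (1)--(3) of the statement, proving by induction with the Moore-tree structure of balls (girth $2m+1$ gives $c_i=1$ and $a_i=0$ for $i<m$) that every step length is $1$ or $m$ and that the remaining constraints collapse to the stated form, and only then split into the $S(n,\ell)$ and $L(n,\ell)$ counts to produce the recurrence. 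The content of the two routes is the same --- your step-length induction via condition (4) together with the diameter bound is exactly what the paper's identification of $\beta\kappa$ encodes, and your $S/L$ split is the combinatorial analogue of the paper's two applications of \cref{rec} --- but you supply the geometric justification in more detail than the paper (which dispatches the crucial $\beta\kappa(e_{ij})$ computation with the phrase ``by $g=2m+1$''), so your argument serves as a useful expansion of that step. Both routes close with Pascal's rule, as in \cref{lem:recurrence}, and both also recover the vanishing of $\MH^\ell_n(G)$ off the lattice $(n,\ell)=(2i+j,(m+1)i+j)$ from the same positional parity argument on the length-$m$ steps.
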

Namely the cycles are tuples of an arrange of ``back-and-forth'' and ``distance $m$''. Note that such tuples can be constructed recursively from the above conditions.

Furthermore, we give a computation for magnitude homology of even cycle graphs (\cref{thm:evenref}), whose rank is also conjectured by Hepworth and Willerton, and determined by Gu. As for the Moore graphs, we provide an explicit basis for it, which consists of linear combinations of tuples, instead of single tuples as was the case for the odd scenario. Since even cycle graphs are not geodetic as metric spaces, we cannot apply \cref{thm1}. Instead, we construct suitable projective resolutions for them, which are in fact minimal projective resolutions (\cref{thm:resol_C2m}).

\section{Preliminaries}
\subsection{Conventions for metric spaces}
In the following, we consider a {\it quasi metric space} that is a pair $(X, d)$ of a set $X$ and a map $d : X\times X \too [0, \infty]$ satisfying the following :
\begin{enumerate}
\item $d(x, y) = 0$ if and only if $x = y$,
\item $d(x, y) + d(y, z) \geq d(x, z)$.
\end{enumerate}
The notion of  quasi metric space is a generalization of the classical metric space, namely it admits infinite distance and the distance function may be non-symmetric. The readers who are not familiar with this notion can deal with them as the classical one in the following.
\begin{df}[\cite{LS} Definition 4.19, \cite{KY} Section 2.2]
Let $(X, d)$ be a quasi metric space.
\begin{enumerate}
    \item For $x, y, z \in X$, we denote the condition $d(x, y)+ d(y, z) = d(x, z)$ by $x\leq y\leq z$.
    \item For $x, y, z, w \in X$, we denote the condition $y= z$ and $x\leq y\leq w$ by $x\leq y=z\leq w$.
    \item For $a, b \in X$ with $d(a, b)<\infty$, we denote the set $\{x \in X \mid a \leq  x \leq  b\}$ by $I_X(a, b)$. We can equip $I_X(a, b)$ a poset structure by defining $x \preceq y$ if and only if $a \leq x \leq y$ (or equivalently $x \leq y \leq b$).
    \item $(X, d)$ is called {\it geodetic} if $I_X(a, b)$ is totally ordered for all $a, b \in X$ with $d(a, b)<\infty$.
    \item A quasi metric space $(Y, d_Y)$ is a {\it quasi metric subspace} of $(X, d_X)$ if $Y$ is a subset of $X$ and $d_Y = d_X\mid_Y$.
\end{enumerate}
\end{df}
\begin{rem}
A  (di)graph $G$ is geodetic if and only if there exists at most one shortest (directed) path between any two points. A geodesic space is geodetic if and only if it is uniquely geodesic (\cite{J} Proposition 6.3).
\end{rem}
\begin{lem}\label{subgeod}
    Let $(X, d)$ be a geodetic quasi metric space and $Y \subset X$ be a quasi metric subspace. Then $(Y, d)$ is also geodetic.
\end{lem}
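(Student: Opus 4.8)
The plan is to show directly that each interval poset $I_Y(a,b)$ computed inside $Y$ is a subposet of the corresponding interval poset $I_X(a,b)$ computed inside $X$, and then to invoke the elementary fact that any subset of a totally ordered set is itself totally ordered.

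First I would observe that, since $d_Y = d_X\mid_Y$, for any $a, b \in Y$ the condition $d_Y(a,b) < \infty$ holds if and only if $d_X(a,b) < \infty$; hence it suffices to consider pairs $a, b \in Y$ with $d_X(a,b)<\infty$, for which $I_X(a,b)$ is defined. Next I would note that the relation $x \leq y \leq z$ depends only on the three numbers $d(x,y)$, $d(y,z)$, $d(x,z)$, which are unchanged upon restriction to $Y$; therefore, for $x \in Y$, the condition $a \leq x \leq b$ holds in $(Y,d)$ exactly when it holds in $(X,d)$. This yields the set equality $I_Y(a,b) = I_X(a,b) \cap Y$, and, by the same intrinsic character of $\leq$, shows that the partial order $x \preceq y \Leftrightarrow a \leq x \leq y$ on $I_Y(a,b)$ is precisely the restriction of the partial order on $I_X(a,b)$.

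Finally, since $X$ is geodetic, $I_X(a,b)$ is totally ordered for every admissible pair, and a subposet of a totally ordered poset is totally ordered. Hence $I_Y(a,b)$ is totally ordered for all $a,b \in Y$ with $d_Y(a,b)<\infty$, which is exactly the statement that $(Y,d)$ is geodetic. There is no substantive obstacle in this argument; the only point needing (minor) attention is the verification that the poset structure on an interval is intrinsic — that is, insensitive to the ambient space — which is immediate from the fact that both the interval and its order are defined purely in terms of the distance function.
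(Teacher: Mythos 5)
Your proof is correct and follows exactly the same approach as the paper's (much terser) proof: observe that $I_Y(a,b)$ is a subposet of $I_X(a,b)$ because the order relation is defined purely in terms of the distance function, and then use that a subposet of a totally ordered set is totally ordered. You simply spell out the details that the paper leaves implicit.
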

\begin{proof}
    For $a, b \in Y$ with $d(a, b) < \infty$, the poset $I_Y(a, b)$ is a subposet of $I_X(a, b)$, so the statement follows.
\end{proof}
\begin{conv}
\begin{enumerate}
\item We call $(X, d)$ a {\it finite} quasi metric space if $|X|$ is finite.
\item We denote $X^n_f := \{(x_0, \dots, x_{n-1}) \in X^{n}\mid \sum_{i=0}^{n-2}d(x_{i}, x_{i+1})<\infty\}$.
\item We define $X^2_{f+} := \{(x_0, x_1) \in X^2_f \mid x_0\neq x_1\}$.
\item We denote each element $(x, y) \in X^2_f$ by $e_{xy}$. We also denote $e_{xx}$ by $e_x$.
    \end{enumerate}
\end{conv}
In the following, we sometimes denote a quasi metric space $(X, d)$ simply by $X$.
\subsection{Bar resolution}\label{comonad}
In the following, any ring is unital and associative, and any module over a ring is a right module. For a ring $R$, we denote the category of $R$-modules by $\Mod(R)$, which is an abelian category. For a ring homomorphism $f : R \too S$, we denote the induced functor $\Mod(S) \too \Mod(R)$ by $f^\ast$. Note that the functor $f^\ast$ has left and right  adjoints : $-\otimes_R S\dashv f^\ast \dashv \Hom_R(S, -)$. Now we explain a method to construct a projective resolution of a  $S$-module $M$ by using the above functors. Although it is a special case of a more general and well-known framework using comonad (See \cite{Wei} 8.6.12 or \cite{S} Lemma 1.2, for example), we introduce it in a more explicit manner. In the following, we denote a $S$-module $(f^\ast M)\otimes_RS$ by $M\otimes_RS$ for all  $S$-module $M$. We also denote the $n$-times iteration of the functor $(f^\ast -)\otimes_RS$ by $\otimes^n_RS$. We define a $S$-homomorphism $\del_{n, i} : M\otimes^{n+1}_RS \too M\otimes^n_RS$  by
\[
\del_{n, i}(m\otimes s_0\otimes \dots \otimes s_n) = m\otimes s_0 \otimes \dots s_{i-2}\otimes s_{i-1}s_i\otimes s_{i+1} \otimes \dots \otimes s_n,
\]
for $0\leq i \leq n$, where we formally set $s_{-1} = m$.

\begin{lem}\label{chaincpx}
    For a  $S$-module $M$, the homomorphism $\del_{n} := \sum_{i=0}^n(-1)^i\del_{n, i} : M\otimes^{n+1}_RS \too M\otimes^{n}_RS$ define a chain complex
    \[
    \dots \xlongrightarrow{\del_{n+1}} M\otimes^{n+1}_RS \xlongrightarrow{\del_{n}}  M\otimes^{n}_RS  \xlongrightarrow{\del_{n-1}} \dots \xlongrightarrow{\del_{1}} M\otimes_RS \xlongrightarrow{\del_{0}} M \too 0.
    \]
\end{lem}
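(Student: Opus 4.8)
The statement is exactly the standard identity $\del_{n-1}\circ\del_n=0$ for a bar-type complex, so the plan is to reduce everything to the simplicial face relations. First I would note that there is nothing to check beyond this identity: each $\del_{n,i}$ is a well-defined $S$-homomorphism because for $i\ge 1$ it is induced by the multiplication $S\otimes_R S\to S$, a morphism of $(R,R)$-bimodules since $f\colon R\to S$ is a ring homomorphism, and for $i=0$ it is induced by the $S$-action $(f^\ast M)\otimes_R S\to f^\ast M$, which is $R$-balanced for the same reason (using $m\cdot r=mf(r)$ and associativity of the action). Granting this, $\del_n=\sum_{i}(-1)^i\del_{n,i}$ is a well-defined $S$-homomorphism with image in $M\otimes^n_R S$.

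The core step is the face identity, with the convention $s_{-1}=m$ in force throughout:
\[
\del_{n-1,i}\circ\del_{n,j}=\del_{n-1,j-1}\circ\del_{n,i}\qquad(0\le i<j\le n).
\]
I would prove this by the usual short case analysis. If $j>i+1$, the two operations contract disjoint pairs of adjacent tensor slots, so they commute on the nose and both composites send $m\otimes s_0\otimes\cdots\otimes s_n$ to the same element. If $j=i+1$, each side produces the triple product $s_{i-1}s_is_{i+1}$ in the relevant slot, and the two agree by associativity of multiplication in $S$ when $i\ge 1$, and by associativity of the $S$-action on $M$, i.e.\ $(ms_0)s_1=m(s_0s_1)$, when $i=0$.

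Finally I would expand $\del_{n-1}\circ\del_n=\sum_{i=0}^{n-1}\sum_{j=0}^n(-1)^{i+j}\del_{n-1,i}\circ\del_{n,j}$, split the double sum into the part with $j\le i$ and the part with $i<j$, apply the face identity to the latter, and reindex it by $(i,j)\mapsto(j-1,i)$. This substitution carries $\{(i,j):0\le i<j\le n\}$ bijectively onto $\{(i,j):0\le j\le i\le n-1\}$ and introduces a sign $(-1)^{i+j}\mapsto -(-1)^{i+j}$, so the two parts are negatives of each other and cancel, giving $\del_{n-1}\circ\del_n=0$. I do not expect a genuine obstacle: this is a textbook computation. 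The only points requiring a little care are keeping the boundary convention $s_{-1}=m$ consistent in the $i=0$ instances of the face identity, and checking that the reindexing bijection matches the index ranges exactly --- in particular that the diagonal terms $j=i$ land on the correct side of the split.
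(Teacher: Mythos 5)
Your proposal is correct and follows essentially the same approach as the paper, which simply reduces the claim to the simplicial face identity $\del_{n-1,i}\del_{n,j}=\del_{n-1,j-1}\del_{n,i}$ for $0\le i<j\le n$ and notes that it can be verified immediately. You spell out the case analysis and the reindexing cancellation in more detail, but there is no substantive difference in method.
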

\begin{proof}
    It is enough to show the simplicial identity $\del_{n-1, i}\del_{n, j} = \del_{n-1, j-1}\del_{n, i}$ for all $0\leq i < j \leq n$, which can be verified immediately.
\end{proof}
\begin{lem}\label{exact}
    The chain complex in Lemma \ref{chaincpx} is exact.
\end{lem}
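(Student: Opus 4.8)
The strategy is to show that the augmented complex becomes null-homotopic once we restrict scalars along $f$, and then to transfer that conclusion back over $S$. Write $f^\ast\colon\Mod(S)\too\Mod(R)$ for the restriction functor. It is exact, hence commutes with homology, and it is faithful, since a nonzero $S$-module is already nonzero as an abelian group; consequently a complex of $S$-modules is exact as soon as its image under $f^\ast$ is exact. So it suffices to exhibit an $R$-linear contracting homotopy for
\[
\dots \xlongrightarrow{\del_{1}} M\otimes_RS \xlongrightarrow{\del_{0}} M \too 0 .
\]

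First I would define $R$-linear maps $s_{-1}\colon M\too M\otimes_RS$ and $s_n\colon M\otimes^{n+1}_RS\too M\otimes^{n+2}_RS$ for $n\geq 0$ by inserting $1$ in the last slot, namely $s_{-1}(m)=m\otimes 1$ and $s_n(m\otimes s_0\otimes\dots\otimes s_n)=m\otimes s_0\otimes\dots\otimes s_n\otimes 1$. The point that requires care is that these maps are well defined and $R$-linear: the rightmost tensor factor of $M\otimes^{n+2}_RS$ is formed over $R$, and the relation $(y\cdot r)\otimes 1=y\otimes f(r)$ valid there is exactly the compatibility needed for $s_n$ to respect the $f^\ast$-module structures. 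Note that $s_n$ is visibly not $S$-linear, which is precisely why the argument has to pass through $f^\ast$ rather than staying over $S$.

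Then comes the standard simplicial computation. Applying $\del_{n+1}$ to $s_n(m\otimes s_0\otimes\dots\otimes s_n)=m\otimes s_0\otimes\dots\otimes s_n\otimes 1$, the top face $\del_{n+1,n+1}$ multiplies $s_n\cdot 1=s_n$ and thus returns the original tuple, while for $0\leq i\leq n$ the face $\del_{n+1,i}$ only acts on the $s_j$'s and commutes with appending $1$, so those terms reassemble $s_{n-1}\del_n$. Keeping track of signs yields $\del_{n+1}s_n-s_{n-1}\del_n=(-1)^{n+1}\id$ on $M\otimes^{n+1}_RS$ together with $\del_0 s_{-1}=\id_M$; rescaling $s_n$ by $(-1)^{n+1}$ turns this into a genuine chain contraction $\del_{n+1}s_n+s_{n-1}\del_n=\id$ in every degree, so the $R$-module complex is null-homotopic and in particular exact. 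By the faithful exactness of $f^\ast$ noted above, the complex of $S$-modules in Lemma \ref{chaincpx} is exact, which is the claim. The only genuinely delicate step is the $R$-linearity of the maps $s_n$; everything else is routine manipulation with the simplicial identities, so I do not anticipate a real obstacle here.
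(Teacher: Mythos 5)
Your proof is correct and follows essentially the same route as the paper: both exhibit an $R$-linear contracting homotopy given by appending $1$ in the last tensor slot (your rescaled $(-1)^{n+1}s_n$ is exactly the paper's $h_n$), and conclude exactness of the complex of $S$-modules from its contractibility over $R$.
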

\begin{proof}
     We define an $R$-homomorphism $h_{n} : M\otimes^{n+1}_RS \too M\otimes^{n+2}_RS$ by $h_{n}(m\otimes s_0\otimes \dots \otimes s_n) = (-1)^{n+1}m\otimes s_0\otimes \dots \otimes s_n \otimes 1$ for $n\geq -1$. Then we can verify that $h_{n-1}\del_n + \del_{n+1}h_n = {\rm id}$, namely $h_\ast$ defines a contraction over $R$. In particular the chain complex is exact.
\end{proof}

\begin{lem}\label{presproj}
The functor $-\otimes_R S$ sends projectives to projectives. Also, if $S$ is projective over $R$, then the functor $f^\ast$ sends projectives to projectives.
\end{lem}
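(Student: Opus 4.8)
The plan is to deduce both assertions from the general categorical principle that a left adjoint preserves projective objects whenever its right adjoint is exact, applied to the adjoint triple $-\otimes_R S\dashv f^\ast \dashv \Hom_R(S, -)$ recorded in the text. Concretely, if $F\dashv G$ and $G$ is exact, then for a projective $P$ the functor $\Hom(FP, -)\cong \Hom(P, G-)$ is exact as a composite of exact functors, so $FP$ is projective.

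For the first claim I would take $F = -\otimes_R S$ and $G = f^\ast$. Here $G$ is exact: restriction of scalars along $f$ leaves the underlying abelian group unchanged, and exactness of a sequence of $S$-modules is detected on underlying abelian groups. Hence $-\otimes_R S$ preserves projectives. For the second claim I would take $F = f^\ast$ and $G = \Hom_R(S, -)\colon \Mod(R)\too\Mod(S)$; this right adjoint is exact precisely when $S$ is projective as a (right) $R$-module, which is exactly the standing hypothesis, so $f^\ast$ preserves projectives in that case.

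Alternatively — and this is the version I would probably write out in full, to stay elementary and self-contained — one can argue via direct summands of free modules. Since $-\otimes_R S$ is additive and commutes with arbitrary direct sums, and $R\otimes_R S\cong S$, for any projective $R$-module $P$ with $R^{(I)}\cong P\oplus P'$ one gets $S^{(I)}\cong (P\otimes_R S)\oplus(P'\otimes_R S)$, exhibiting $P\otimes_R S$ as a direct summand of a free $S$-module. Likewise $f^\ast$ is additive and commutes with direct sums (it preserves all colimits), so if $Q$ is a projective $S$-module with $S^{(J)}\cong Q\oplus Q'$, then $f^\ast Q$ is a direct summand of $f^\ast(S^{(J)})\cong (f^\ast S)^{(J)}$, which is a projective $R$-module once $f^\ast S$ is; hence $f^\ast Q$ is projective.

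I do not expect any real obstacle: the statement is standard. The only points meriting a line of care are that the functors in play commute with the (possibly infinite) direct sums appearing in a presentation of a projective module, and bookkeeping of which side each module structure sits on — but since the text fixes all modules to be right modules, and $S$ is an $(R,S)$-bimodule via $f$ on the left, this is automatic.
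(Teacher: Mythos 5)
Your first argument is exactly the paper's proof: the paper appeals to the same two adjunction isomorphisms $\Hom_S(-\otimes_RS, =) \cong \Hom_R(-, f^\ast=)$ and $\Hom_R(f^\ast -, =) \cong \Hom_S(-, \Hom_R(S, =))$, together with the observation that $f^\ast$ (and $\Hom_R(S,-)$ when $S$ is $R$-projective) preserves surjections; since $\Hom$-functors are automatically left exact, ``preserves surjections'' and ``is exact'' are interchangeable here, so your phrasing and the paper's coincide. Your alternative argument via direct summands of free modules is also correct and genuinely different in flavor: it avoids the adjunction entirely, at the cost of noting that $-\otimes_R S$ and $f^\ast$ are additive and commute with arbitrary direct sums (the latter because $f^\ast$ has a right adjoint, or simply because it is restriction of scalars), and that $f^\ast S$ is by definition $S$ with its $R$-module structure, so ``$f^\ast S$ projective over $R$'' is exactly the standing hypothesis. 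Both routes are short; the adjunction route is the one the paper uses and generalizes more readily, while the direct-summand route is more elementary and self-contained.
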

\begin{proof}
    Note that $f^\ast$ preserves surjections, and so does $\Hom_R(S, -)$ when $S$ is projective over $R$. Now the statement follows from the natural isomorphisms $\Hom_R(-, f^\ast=) \cong \Hom_S(-\otimes_RS, =)$ and $\Hom_S(-, \Hom_R(S, =)) \cong \Hom_R(f^\ast -, =)$.
\end{proof}
\begin{cor}\label{barresol}
    If $f^\ast M$ is a projective  $R$-module and $S$ is projective over $R$, then the chain complex in Lemma \ref{chaincpx} is a projective resolution of a  $S$-module $M$.
\end{cor}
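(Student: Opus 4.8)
The statement should follow by simply assembling the three preceding lemmas, so the work is organizational rather than substantive. First I would invoke \cref{exact}: the augmented complex
\[
\dots \xlongrightarrow{\del_{n+1}} M\otimes^{n+1}_RS \xlongrightarrow{\del_{n}}  M\otimes^{n}_RS  \xlongrightarrow{\del_{n-1}} \dots \xlongrightarrow{\del_{1}} M\otimes_RS \xlongrightarrow{\del_{0}} M \too 0
\]
is exact, hence a genuine resolution of the $S$-module $M$. It therefore only remains to check that every term $M\otimes^{n+1}_RS$ is projective as an $S$-module.

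For this I would argue by induction on $n$, tracking the module alternately through its $R$-module and $S$-module incarnations. The base case is handled by \cref{presproj}: since $f^\ast M$ is projective over $R$ by hypothesis, $M\otimes_R S = (f^\ast M)\otimes_R S$ is projective over $S$. For the inductive step, suppose $M\otimes^{n}_RS$ is projective over $S$. Because $S$ is projective over $R$, the functor $f^\ast$ carries it to a projective $R$-module (the second clause of \cref{presproj}), and then applying $-\otimes_R S$ once more returns a projective $S$-module (the first clause of \cref{presproj}); but this is exactly $M\otimes^{n+1}_RS$ by definition of the iterated functor $(f^\ast -)\otimes_R S$. Thus all terms are projective $S$-modules, and an exact complex of projectives over $S$ augmented to $M$ is by definition a projective resolution.

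I do not expect any genuine obstacle here; the only point requiring mild care is the bookkeeping of where the two hypotheses enter — ``$f^\ast M$ projective over $R$'' is used once, at the bottom of the complex, while ``$S$ projective over $R$'' is used at every subsequent stage to re-enter the functor loop via $f^\ast$. A subtlety worth stating explicitly is that the definition $M\otimes^{n+1}_RS$ already builds in the restriction $f^\ast$ before each tensor, so no separate argument is needed to view the terms as $S$-modules.
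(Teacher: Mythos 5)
Your proof is correct and is essentially the paper's argument: the paper proves the corollary by citing Lemmas \ref{exact} and \ref{presproj}, and your write-up simply makes explicit the inductive bookkeeping (base case via projectivity of $f^\ast M$, inductive step alternating the two clauses of \cref{presproj}) that the paper leaves implicit.
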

\begin{proof}
    It follows from Lemmas \ref{exact} and \ref{presproj}.
\end{proof}

This projective resolution is called the \textit{bar resolution}.

\begin{rem}\label{reduced}
    When the homomorphism $f : R \too S$ admits a retraction $g : S \too R$ with $g \circ f = {\rm id}$, an $R$-module $M$ admits a projective resolution
    \[
    \dots \too (M\otimes^{n}_R\overline{S})\otimes_RS \too  (M\otimes^{n-1}_R\overline{S})\otimes_RS  \too \dots \too M\otimes_RS \too M \too 0,
    \]
    where $\overline{S} = {\rm ker}g$ under the same assumption in \cref{barresol}. It is called the {\it normalized bar resolution} (\cite{Wei} Exercise 8.6.4).
\end{rem}

\subsection{Minimal projective resolution}
As in \cref{comonad}, we suppose that any ring is unital and associative, and any module over a ring is a right module. In the following, we collect some basic facts on the minimal projective resolution from \cite{HGK} and \cite{AF}. We put proofs even for classical results for the readability. We only present facts that is necessary for our purpose, and readers are encouraged to consult the references.
\subsubsection{radical}
\begin{df}
Let $R$ be a ring and $M$ be an $R$-module.
\begin{enumerate}
    \item We define the {\it radical} of $M$ by the intersection of all maximal $R$-submodules of $M$. We denote the radical of $M$ by ${\rm rad}(M)$.
    \item We define the {\it Jacobson radical} of $R$ by the radical of $R$ as an $R$-module.
\end{enumerate}
\end{df}
Note that the kernel of a non-zero homomorphism from a module to a simple module is a maximal submodule. Conversely, any quotient by a maximal submodule is a simple module. Hence we can identify ${\rm rad}(M)$ with the elements $m \in M$ which vanishes by any $R$-homomorphism from $M$ to a simple module $U$.
\begin{lem}[\cite{HGK} Proposition 3.4.3]\label{radcoprod}
    We have ${\rm rad}(\oplus_jM_j) = \oplus_j{\rm rad}(M_j)$ as submodules of $\oplus_j M_j$.
\end{lem}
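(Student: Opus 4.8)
The plan is to use the characterization of the radical recorded just above the statement: an element $m \in M$ lies in ${\rm rad}(M)$ if and only if $\varphi(m) = 0$ for every $R$-homomorphism $\varphi \colon M \too U$ into a simple $R$-module $U$. Write $M = \oplus_j M_j$, and let $\iota_j \colon M_j \too M$ be the canonical inclusions and $\pi_j \colon M \too M_j$ the canonical projections, so that $\pi_j \circ \iota_j = {\rm id}_{M_j}$.

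For the inclusion $\oplus_j {\rm rad}(M_j) \subseteq {\rm rad}(M)$, I would take $m = (m_j)_j$ with each $m_j \in {\rm rad}(M_j)$ (only finitely many nonzero) and an arbitrary homomorphism $\varphi \colon M \too U$ with $U$ simple. For each $j$ the composite $\varphi \circ \iota_j \colon M_j \too U$ is a homomorphism into a simple module, hence it kills ${\rm rad}(M_j)$, so $\varphi(\iota_j(m_j)) = 0$. Summing over the finitely many nonzero coordinates gives $\varphi(m) = \sum_j \varphi(\iota_j(m_j)) = 0$, and since $\varphi$ was arbitrary, $m \in {\rm rad}(M)$.

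For the reverse inclusion, I would take $m = (m_j)_j \in {\rm rad}(M)$ and fix an index $j$; the goal is to show $m_j \in {\rm rad}(M_j)$. Given any homomorphism $\psi \colon M_j \too U$ into a simple module, the composite $\psi \circ \pi_j \colon M \too U$ is again a homomorphism into a simple module, so $\psi(m_j) = \psi(\pi_j(m)) = (\psi \circ \pi_j)(m) = 0$ by the hypothesis on $m$. As $\psi$ was arbitrary this shows $m_j \in {\rm rad}(M_j)$, and since this holds for every $j$ we conclude $m \in \oplus_j {\rm rad}(M_j)$.

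There is essentially no obstacle here; the only points requiring a little care are that the index set may be infinite — so the first inclusion uses that elements of $\oplus_j M_j$ have finite support, making $\varphi(m)$ a genuine finite sum — and that both inclusions remain valid under the convention ${\rm rad}(M) = M$ when $M$ admits no homomorphism onto a simple module (equivalently, no maximal submodule), so that no separate case distinction is needed.
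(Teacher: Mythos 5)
Your proof is correct and follows essentially the same route as the paper's: both inclusions are established via the characterization of the radical as the common kernel of all homomorphisms into simple modules, pre- and post-composing with the canonical projections and inclusions of the direct sum. The only (cosmetic) difference is that the paper proves $\iota_j m_j \in {\rm rad}(M)$ for each summand separately and implicitly sums, while you handle a general element $(m_j)_j$ directly; the finite-support remark you add is a sensible clarification but not a substantive change.
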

\begin{proof}
Let $m \in {\rm rad}(\oplus_jM_j)$ and $\pi_i : \oplus_jM_j \too M_i$ be the projection. Then $f_i(\pi_i m) = f_i\pi_i(m) = 0$ for all $f_i : M_i \too U$ with $U$ being simple. Hence we obtain $\pi_i m \in {\rm rad}(M_i)$ and $m \in \oplus_j{\rm rad}(M_j)$, namely ${\rm rad}(\oplus_jM_j) \subset \oplus_j{\rm rad}(M_j)$. Conversely, let $m_i \in {\rm rad}(M_i)$ and $\iota_i : M_i \too \oplus_jM_j$ be the inclusion.  Then $f(\iota_im_i) = f\iota_i(m_i) = 0$ for all  $f : M \too U$ with $U$ being simple. Hence we obtain $\iota_i m_i \in {\rm rad}(M)$, namely $\oplus_j{\rm rad}(M_j) \subset {\rm rad}(\oplus_jM_j)$.
\end{proof}
\begin{lem}\label{radquot}
For an $R$-module $M$ and an $R$-submodule $N \subset {\rm rad}(M) \subset M$, we have ${\rm rad}(M/N) = ({\rm rad}M)/N$ as submodules of $M/N$.
\end{lem}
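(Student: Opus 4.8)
The plan is to exploit the lattice isomorphism theorem together with the defining property of the radical. First I would recall that, for the submodule $N\subset M$, the assignment $L\mapsto L/N$ is an inclusion-preserving bijection between the $R$-submodules of $M$ containing $N$ and the $R$-submodules of $M/N$. Under this bijection a submodule $L$ with $N\subset L\subset M$ is maximal in $M$ if and only if $L/N$ is maximal in $M/N$, since a proper chain $L\subsetneq L'\subsetneq M$ corresponds precisely to a proper chain $L/N\subsetneq L'/N\subsetneq M/N$.

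Next I would bring in the hypothesis $N\subset{\rm rad}(M)$. By definition ${\rm rad}(M)$ is the intersection of all maximal submodules of $M$, so every maximal submodule of $M$ contains ${\rm rad}(M)$, hence contains $N$. Therefore the maximal submodules of $M$ containing $N$ are \emph{exactly} all the maximal submodules of $M$. Combined with the previous paragraph, the maximal submodules of $M/N$ are precisely the modules $L/N$ with $L$ maximal in $M$.

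Finally I would compute the intersection. Since every maximal submodule $L$ of $M$ contains $N$, we have $\bigcap_L (L/N) = \left(\bigcap_L L\right)/N$, where both intersections range over all maximal submodules $L$ of $M$; this is the elementary fact that intersection commutes with passing to the quotient by a common submodule. The left-hand side is ${\rm rad}(M/N)$ by the identification of maximal submodules above, and the right-hand side is $({\rm rad}\,M)/N$, which is the claim. In the degenerate case where $M$ — equivalently $M/N$ — has no maximal submodule, both sides equal $M/N$ under the usual convention that an empty intersection of submodules of $M/N$ is $M/N$, and the argument above still applies once the empty intersections are read correctly.

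I do not expect a genuine obstacle here: the statement reduces entirely to the correspondence theorem and the definition of the radical. The one point deserving a moment's care is to verify that the family of maximal submodules over which one intersects is unchanged by passing from $M$ to $M/N$, and this is exactly where the hypothesis $N\subset{\rm rad}(M)$ — rather than merely $N\subset M$ — enters.
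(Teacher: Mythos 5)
Your argument is correct and is essentially the paper's own proof: both rest on the correspondence theorem for submodules under the quotient map $M \to M/N$ (preserving inclusions and intersections) together with the observation that, because $N \subset {\rm rad}(M)$, every maximal submodule of $M$ contains $N$, so the maximal submodules of $M/N$ are exactly the $L/N$ with $L$ maximal in $M$. Your extra remark on the degenerate case with no maximal submodules is a harmless addition, not a different route.
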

\begin{proof}
Note that the image and the inverse image of the quotient map $M \too M/N$ induces a bijection between sets of submodules of $M/N$ and submodules of $M$ containing $N$, that preserves inclusions and intersections. Since ${\rm rad}(M/N)$ is the intersection of  maximal submodules of $M/N$, we obtain the statement by the assumption $N \subset {\rm rad}M$.
\end{proof}
\begin{lem}[\cite{HGK} Proposition 5.1.8]\label{projrad}
    For a projective $R$-module $P$, we have ${\rm rad}(P) = P{\rm rad}(R)$.
\end{lem}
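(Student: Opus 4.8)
The plan is to abbreviate $J := {\rm rad}(R)$ and split the equality into two inclusions handled quite differently. The inclusion $PJ \subseteq {\rm rad}(P)$ is a general fact valid for \emph{every} module, so I would prove it first and for arbitrary $M$; the reverse inclusion ${\rm rad}(P) \subseteq PJ$ is the part that uses projectivity, and I would obtain it by realizing $P$ as a direct summand of a free module, establishing the identity for free modules (which is almost immediate from \cref{radcoprod}), and then transferring it to $P$ by a short direct-sum bookkeeping argument.

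For the general inclusion: given a homomorphism $f : M \to U$ with $U$ simple, we have $f(MJ) = f(M)J \subseteq UJ$, and $UJ = 0$, because for any $0 \neq u \in U$ the evaluation $r \mapsto ur$ is a surjection $R \to U$ with kernel ${\rm Ann}(u)$, so $R/{\rm Ann}(u) \cong U$ is simple, ${\rm Ann}(u)$ is a maximal right ideal, and hence $J \subseteq {\rm Ann}(u)$, i.e.\ $uJ = 0$. By the characterization of ${\rm rad}(M)$ recalled above as the set of elements annihilated by every homomorphism from $M$ to a simple module, this gives $MJ \subseteq {\rm rad}(M)$; taking $M = R$ also shows $RJ \subseteq J$, so $J$ is a two-sided ideal. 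Next, for a free module $F = \bigoplus_{i \in I} R$, \cref{radcoprod} gives ${\rm rad}(F) = \bigoplus_{i \in I} {\rm rad}(R)$, and since $e_i j \in FJ$ for every generator $e_i$ and every $j \in J$ we get $\bigoplus_{i \in I} {\rm rad}(R) \subseteq FJ \subseteq {\rm rad}(F)$, whence ${\rm rad}(F) = FJ$ in the free case.

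Finally, for a general projective $P$ I would pick $Q$ with $F := P \oplus Q$ free, and record the two internal direct-sum identities ${\rm rad}(F) = {\rm rad}(P) \oplus {\rm rad}(Q)$ (again \cref{radcoprod}) and $FJ = PJ \oplus QJ$ as submodules of $P \oplus Q$. Comparing them via the free-module case, ${\rm rad}(P) \oplus {\rm rad}(Q) = PJ \oplus QJ$ inside $P \oplus Q$; since $P \cap Q = 0$, any $x \in {\rm rad}(P) \subseteq P$ decomposes as $x = x_1 + x_2$ with $x_1 \in PJ \subseteq P$ and $x_2 \in QJ \subseteq Q$, forcing $x_2 = x - x_1 \in P \cap Q = 0$ and $x = x_1 \in PJ$; thus ${\rm rad}(P) \subseteq PJ$, and the reverse inclusion is the general fact proved above. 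I do not expect a genuine obstacle here: the only point requiring care is that $P$ and $Q$ need not be finitely generated, so I would deliberately avoid any Nakayama-type argument and rely solely on the inclusion $MJ \subseteq {\rm rad}(M)$ for arbitrary $M$ together with the coproduct formula for the radical, both of which hold without finiteness hypotheses.
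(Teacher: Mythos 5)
Your proof is correct and takes essentially the same route as the paper: realize $P$ as a direct summand of a free module $F = P \oplus Q$, invoke \cref{radcoprod}, and compare the resulting internal direct-sum decompositions of ${\rm rad}(F)$ and $F\,{\rm rad}(R)$. The only variation is that you first establish the general inclusion $M\,{\rm rad}(R) \subseteq {\rm rad}(M)$ and use it (with $M=R$) to justify $R\,{\rm rad}(R) = {\rm rad}(R)$, whereas the paper simply cites this equality as known — a minor point of bookkeeping rather than a different strategy.
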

\begin{proof}
    Since $R{\rm rad}(R) = {\rm rad}(R)$, we have $F{\rm rad}(R) = {\rm rad}(F)$ for all free $R$-module $F$ by \cref{radcoprod}. When $P$ is projective, there is an $R$-module $Q$ such that $P\oplus Q$ is a free module. Hence we have $P{\rm rad}(R)\oplus Q{\rm rad}(R) = (P\oplus Q){\rm rad}(R) =  {\rm rad}(P\oplus Q) = {\rm rad}(P)\oplus{\rm rad}(Q)$, which preserves inclusions into $P\oplus Q$. Thus it implies that ${\rm rad}(P) = P{\rm rad}(R)$.
\end{proof}
\begin{lem}[\cite{HGK} Proposition 3.4.5]\label{invertible}
    For a ring $R$, we have ${\rm rad}(R) = \{r \in R \mid 1-rs \text{ is right invertible for all \ } s \in R\}$.
\end{lem}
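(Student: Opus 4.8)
The plan is to prove the two inclusions of the claimed description
\[
\mathrm{rad}(R) = \{r \in R \mid 1 - rs \text{ is right invertible for all } s \in R\}
\]
separately, using the characterization of $\mathrm{rad}(R)$ as the set of elements killed by every homomorphism from $R$ to a simple module, equivalently the intersection of all maximal right ideals of $R$.

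For the inclusion ``$\subseteq$'', suppose $r \in \mathrm{rad}(R)$ and fix $s \in R$; I want to show $1 - rs$ has a right inverse, i.e.\ $(1-rs)R = R$. If not, then $(1-rs)R$ is a proper right ideal, hence contained in some maximal right ideal $\mathfrak{m}$ (by a Zorn's lemma argument, which I would invoke rather than spell out). Since $r \in \mathrm{rad}(R) \subseteq \mathfrak{m}$ and $\mathfrak{m}$ is a right ideal, we get $rs \in \mathfrak{m}$, and therefore $1 = (1 - rs) + rs \in \mathfrak{m}$, contradicting properness of $\mathfrak{m}$. Hence $(1-rs)R = R$ and $1-rs$ is right invertible.

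For the reverse inclusion ``$\supseteq$'', suppose $r$ is such that $1 - rs$ is right invertible for every $s \in R$, and suppose toward a contradiction that $r \notin \mathrm{rad}(R)$. Then there is a maximal right ideal $\mathfrak{m}$ with $r \notin \mathfrak{m}$. By maximality, $\mathfrak{m} + rR = R$, so we may write $1 = m + rs$ for some $m \in \mathfrak{m}$ and $s \in R$; thus $m = 1 - rs$. By hypothesis $1 - rs$ is right invertible, so $mt = 1$ for some $t \in R$, whence $1 \in \mathfrak{m}$, contradicting properness of $\mathfrak{m}$. Therefore $r \in \mathrm{rad}(R)$, completing the proof.

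The only mild subtlety is that $\mathrm{rad}(R)$ must be identified with the intersection of maximal \emph{right} ideals, which matches the convention (fixed in \cref{comonad}) that modules are right modules: the kernel of a nonzero map from $R_R$ to a simple module is a maximal right ideal, and conversely, so the two descriptions agree. Beyond that the argument is entirely formal; I do not anticipate a genuine obstacle, though care is needed to keep the one-sidedness consistent throughout (right ideals, right invertibility) since $R$ is not assumed commutative.
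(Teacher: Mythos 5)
Your proof is correct and follows essentially the same route as the paper's: both directions proceed by locating a maximal right ideal and deriving a contradiction with $1$ belonging to it, using $rR + I = R$ in the reverse direction. You simply spell out a few steps (e.g.\ why both $1-rs$ and $rs$ lie in the same maximal ideal) that the paper leaves implicit.
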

\begin{proof}
    Let $r \in {\rm rad}(R)$ and $s \in R$. If $1-rs$ is not right invertible, then there is a maximal ideal $I$ containing $1-rs$ and $rs$, which is a contradiction. Hence $1-rs$ is  right invertible. Let $r \in R$ such that $1-rs$ is right invertible for all $s \in R$. If there is a maximal ideal $I$ that does not contain $r$, then we have $rR + I = R$. Hence there exist $s\in R, i \in I$ such that $rs + i = 1$, which implies a contradiction that $i \in I$ is right invertible. Hence $r \in {\rm rad}(R)$.
\end{proof}

\subsubsection{small ideal}
\begin{df}
    Let $M$ be an $R$-module and $S \subset M$ be an $R$ submodule. We say that $S$ is {\it small} if $S+N = M$ implies $N=M$ for all $R$ submodule $N \subset M$.
\end{df}
\begin{lem}[Nakayama's lemma, \cite{HGK} Lemma 3.4.12]\label{nakayama}
For a finitely generated $R$-module $M$, the submodule $M{\rm rad}(R) \subset M$ is small.
\end{lem}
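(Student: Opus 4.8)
The plan is to reduce the statement to the special case $N=0$---that is, to the claim that a finitely generated module $L$ with $L{\rm rad}(R)=L$ vanishes---and then to prove that case by an induction on the minimal number of generators, using the invertibility criterion of \cref{invertible}.

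First I would unwind what smallness means here. Suppose $N\subseteq M$ is an $R$-submodule with $M{\rm rad}(R)+N=M$, and put $L:=M/N$. A quotient of a finitely generated module is finitely generated, so $L$ is finitely generated; moreover the image of $M{\rm rad}(R)$ under the projection $M\to L$ equals $(M{\rm rad}(R)+N)/N=L$, while that image is plainly contained in $L{\rm rad}(R)$. Hence $L{\rm rad}(R)=L$, and it remains to prove $L=0$, which is the same as $N=M$.

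To see that such an $L$ is zero, pick a generating set $\ell_1,\dots,\ell_k$ of $L$ with $k$ as small as possible and suppose, for contradiction, that $k\geq1$. Since $\ell_k\in L=L{\rm rad}(R)$ and ${\rm rad}(R)$ is a (two-sided) ideal, one can write $\ell_k=\sum_{i=1}^{k}\ell_i r_i$ with each $r_i\in{\rm rad}(R)$; isolating the last term gives $\ell_k(1-r_k)=\sum_{i=1}^{k-1}\ell_i r_i$. By \cref{invertible} applied with $s=1$, the element $1-r_k$ admits a right inverse $u$. Since our modules are right modules, multiplying this identity on the right by $u$ yields $\ell_k=\bigl(\sum_{i=1}^{k-1}\ell_i r_i\bigr)u\in\ell_1R+\dots+\ell_{k-1}R$, so that $\ell_1,\dots,\ell_{k-1}$ already generate $L$ --- a generating set of size $k-1<k$ (the empty set when $k=1$) --- contradicting the minimality of $k$. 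Hence $k=0$, i.e. $L=0$.

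The argument is classical and I do not expect a genuine obstacle; the one point requiring care is that \emph{right}-invertibility of $1-r_k$, which is all \cref{invertible} provides, is exactly enough here, precisely because $L$ is a right module and the cancellation is carried out on the right. A cosmetic alternative is to pass first to the cyclic quotient $L/(\ell_1R+\dots+\ell_{k-1}R)$ before invoking \cref{invertible}.
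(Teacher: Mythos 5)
Your proof is correct and follows essentially the same route as the paper: reduce to the case $L\mathrm{rad}(R)=L\Rightarrow L=0$ by passing to the quotient $M/N$, then eliminate generators one at a time using \cref{invertible}. The only (welcome) refinement is your explicit remark that \emph{right}-invertibility of $1-r_k$ suffices since modules are right modules; the paper writes $(1-r_k)^{-1}$ but means the same thing.
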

\begin{proof}
    Suppose that $M = \sum_{i=1}^km_iR$. We first show that $M{\rm rad}(R) = M$ implies $M=0$. By $M{\rm rad}(R) = M$, there exist $r_1, \dots, r_k \in {\rm rad}(R)$ such that $\sum_{i=1}^km_ir_i = m_k$, which implies that $m_k = (\sum_{i=1}^{k-1}m_ir_i)(1-r_k)^{-1}$ by \cref{invertible}. Hence $M$ is generated by $m_1, \dots, m_{k-1}$, and we obtain $M=0$ by repeating this argument. Let $N \subset M$ be a submodule such that $M{\rm rad}(R) + N = M$. Then we have $(M/N){\rm rad}(R) = (M{\rm rad}(R))/(N\cap{\rm rad}(R)) \cong (M{\rm rad}(R)+N)/N = M/N$, hence $M/N=0$ by the previous argument. Thus we obtain $M=N$.
\end{proof}
\begin{lem}[\cite{AF} Proposition 9.13]\label{smallrad}
    For an $R$-module $M$ and a small submodule $S \subset M$, we have $S \subset {\rm rad}(M)$.
\end{lem}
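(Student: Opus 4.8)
The plan is to unwind the definition of the radical and argue one maximal submodule at a time. Recall that \({\rm rad}(M)\) is by definition the intersection of all maximal \(R\)-submodules of \(M\); if \(M\) has no maximal submodule at all, this intersection is empty and one adopts the convention \({\rm rad}(M) = M\), so that \(S \subset {\rm rad}(M)\) holds trivially. I may therefore assume \(M\) has at least one maximal submodule, and it then suffices to prove that the small submodule \(S\) is contained in each of them.

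So fix a maximal submodule \(N \subsetneq M\) and suppose, toward a contradiction, that \(S \not\subset N\). Choose \(s \in S \setminus N\). Since \(N \subset S + N\) always holds and \(s \in (S+N)\setminus N\), the \(R\)-submodule \(S + N\) strictly contains \(N\); maximality of \(N\) then forces \(S + N = M\). Now I invoke the defining property of a small submodule: \(S + N = M\) implies \(N = M\), contradicting the fact that \(N\) is a proper submodule of \(M\).

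Hence \(S \subset N\) for every maximal submodule \(N \subset M\), and intersecting over all such \(N\) yields \(S \subset {\rm rad}(M)\), as claimed. The argument is purely formal, relying only on the definition of smallness and the correspondence between proper submodules of \(M\) and the maximality condition; the one point that deserves a moment's care is the degenerate case of a module admitting no maximal submodules, which is disposed of at the outset via the empty-intersection convention. Beyond that bookkeeping I do not expect any genuine obstacle.
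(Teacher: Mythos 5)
Your argument is correct and matches the paper's proof: both show $S$ is contained in every maximal submodule $N$ by noting that $S \not\subset N$ would force $S+N = M$ and hence $N = M$ by smallness, a contradiction. You simply spell out the contradiction and handle the degenerate no-maximal-submodule case, which the paper leaves implicit.
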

\begin{proof}
    Let $S \subset M$ be a small submodule and $N \subset M$ be a maximal submodule. Then we have $S \subset N$ by the smallness and the maximality. Hence we obtain $S \subset {\rm rad}(M)$.
\end{proof}
\begin{df}
    A homomorphism of $R$-modules $f : M\too N$ is an {\it essential epimorphism} if it is surjective and ${\rm ker}f \subset M$ is small. Equivalently, $f$ is an essential epimorphism if, it is surjective and for all $g : L \too M$, the surjectivity of $fg$ implies the surjectivity of $g$.
\end{df}

\begin{lem}\label{2-3}
    For surjective homomorpshisms $f : M\too N$ and $g : L \too M$, $f, g, fg$ are essential epimorphisms if two of them are so.
\end{lem}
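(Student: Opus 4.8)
The plan is to work throughout with the characterization of an essential epimorphism as a surjection with small kernel, or equivalently via the lifting property recorded in the definition. Since $f$ and $g$ are surjective by hypothesis, so is $fg$, so in each of the three cases the surjectivity half is automatic and only the smallness (equivalently, the lifting) half needs proof. There are three implications to establish: (i) $f,g$ essential $\Rightarrow fg$ essential; (ii) $f,fg$ essential $\Rightarrow g$ essential; (iii) $g,fg$ essential $\Rightarrow f$ essential.

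Cases (i) and (ii) I would dispatch directly with the lifting reformulation. For (i): if $h\colon K\to L$ is a homomorphism with $(fg)h$ surjective, then $f(gh)$ is surjective, so essentiality of $f$ gives that $gh$ is surjective, and then essentiality of $g$ gives that $h$ is surjective; hence $fg$ is essential. For (ii): if $h\colon K\to L$ has $gh$ surjective, then $(fg)h=f(gh)$ is surjective because $f$ is surjective, so essentiality of $fg$ forces $h$ to be surjective; hence $g$ is essential. (Note that (ii) used only that $fg$ is essential and that $f$ is surjective.)

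The one step I expect to require an actual idea is (iii), since there the two hypotheses do not chain along a single composite. I would isolate the elementary sublemma: \emph{if $\varphi\colon A\to B$ is surjective and $S\subseteq A$ is small, then $\varphi(S)$ is small in $B$}. Indeed, if $\varphi(S)+N=B$ for a submodule $N\subseteq B$, then for each $a\in A$ we may write $\varphi(a)=\varphi(s)+n$ with $s\in S$, $n\in N$, whence $a-s\in\varphi^{-1}(N)$ and so $S+\varphi^{-1}(N)=A$; smallness of $S$ gives $\varphi^{-1}(N)=A$, and surjectivity of $\varphi$ then gives $N=\varphi(\varphi^{-1}(N))=B$. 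Now apply this with $\varphi=g$ and $S=\ker(fg)$: the latter is small because $fg$ is essential, and since $g$ is surjective one checks $g(\ker(fg))=\ker f$; therefore $\ker f$ is small, i.e.\ $f$ is an essential epimorphism. (Alternatively, (iii) can be proved by pulling back a test map $K\to M$ along $g$, but the sublemma keeps the argument self-contained; either way, only the surjectivity of $g$, not its essentiality, is needed.) Combining (i)--(iii) gives the statement, with the sublemma being the only non-formal ingredient.
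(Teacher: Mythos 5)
Your proof is correct and complete. The paper dismisses this lemma with a one-word proof (``Straightforward''), so there is no argument to compare against; your write-up simply supplies the details the authors chose to omit. The decomposition into three implications is the natural one, and cases (i) and (ii) are indeed immediate from the lifting characterization. You correctly identify case (iii) as the only step requiring an idea, and the sublemma you isolate (the image of a small submodule under a surjection is small) is the right tool; your verification of it, via $S + \varphi^{-1}(N) = A$, is sound, as is the identity $g(\ker(fg)) = \ker f$ for surjective $g$. Your parenthetical observations that (ii) needs only surjectivity of $f$ and (iii) needs only surjectivity of $g$ are accurate and sharpen the statement slightly, showing that $fg$ essential already forces both $f$ and $g$ essential when both are surjective.
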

\begin{proof}
    Straightforward.
\end{proof}
\subsubsection{minimal projective resolution}
\begin{df}
\begin{enumerate}
\item A homomorphism $P \too M$ is called a {\it projective cover} of $M$ if it is an essential epimorphism and $P$ is projective.
\item A projective resolution $\varphi_\ast : P_\ast \too M$ is a {\it minimal projective resolution} if each $\varphi_0 : P_0 \too M$ and $\varphi_n : P_n\too {\rm ker}\varphi_{n-1}$ are projective covers.
\end{enumerate}
\end{df}

\begin{prop}\label{zerodiff}
    Let $\varphi_\ast : P_\ast \too M$ be a minimal projective resolution of an $R$-module $M$. Then $\varphi_n\otimes {\rm id}= 0 : P_n\otimes_R(R/{\rm rad}(R)) \too P_{n-1}\otimes_R(R/{\rm rad}(R))$ for all $n \geq 1$.
\end{prop}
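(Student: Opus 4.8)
The plan is to show that $\im\varphi_n$ already lies inside $P_{n-1}\,\mathrm{rad}(R)$, and that the functor $-\otimes_R(R/\mathrm{rad}(R))$ kills exactly that submodule, so nothing survives.

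First I would make the functor $-\otimes_R(R/\mathrm{rad}(R))$ explicit. Tensoring the short exact sequence $0\to\mathrm{rad}(R)\to R\to R/\mathrm{rad}(R)\to 0$ on the left with a right $R$-module $P$ and using right-exactness of $\otimes$, one obtains a natural isomorphism $P\otimes_R(R/\mathrm{rad}(R))\cong P/P\,\mathrm{rad}(R)$ under which the canonical map $P\to P\otimes_R(R/\mathrm{rad}(R))$ becomes the quotient projection $\pi_P\colon P\to P/P\,\mathrm{rad}(R)$; in particular $\ker\pi_P=P\,\mathrm{rad}(R)$. No projectivity of $P$ is needed for this.

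Next I would establish the inclusion $\im\varphi_n\subset P_{n-1}\,\mathrm{rad}(R)$. By the definition of a minimal projective resolution, $\varphi_{n-1}$ is a projective cover for every $n\ge 1$: it is $\varphi_0\colon P_0\to M$ when $n=1$, and $\varphi_{n-1}\colon P_{n-1}\to\ker\varphi_{n-2}$ when $n\ge 2$. In either case $\ker\varphi_{n-1}$ is a small submodule of $P_{n-1}$. By \cref{smallrad} a small submodule is contained in the radical, so $\ker\varphi_{n-1}\subset\mathrm{rad}(P_{n-1})$, and since $P_{n-1}$ is projective, \cref{projrad} gives $\mathrm{rad}(P_{n-1})=P_{n-1}\,\mathrm{rad}(R)$. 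Since $P_\ast\to M$ is a resolution, exactness at $P_{n-1}$ yields $\im\varphi_n=\ker\varphi_{n-1}$, hence $\im\varphi_n\subset P_{n-1}\,\mathrm{rad}(R)$.

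Finally I would assemble these. Naturality of the canonical maps gives a commutative square with $\varphi_n\colon P_n\to P_{n-1}$ on top, $\varphi_n\otimes\mathrm{id}$ on the bottom, and the surjections $\pi_{P_n}$ and $\pi_{P_{n-1}}$ as verticals. Because $\pi_{P_n}$ is surjective, $\im(\varphi_n\otimes\mathrm{id})$ equals the image of the composite $\pi_{P_{n-1}}\circ\varphi_n$; but $\im\varphi_n\subset P_{n-1}\,\mathrm{rad}(R)=\ker\pi_{P_{n-1}}$, so this composite is zero, and therefore $\varphi_n\otimes\mathrm{id}=0$. I do not expect a serious obstacle here; the only points needing a little care are the concrete identification of $-\otimes_R(R/\mathrm{rad}(R))$ with the quotient $-/(-)\mathrm{rad}(R)$, and keeping track that for $n=1$ the relevant projective cover is $\varphi_0$ itself rather than some $\varphi_{n-1}$ with $n-1\ge 1$.
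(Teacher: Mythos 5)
Your proof is correct and follows essentially the same route as the paper: identify $-\otimes_R(R/\mathrm{rad}(R))$ with the quotient by $(-)\mathrm{rad}(R)$, then use smallness of $\ker\varphi_{n-1}$ (minimality) together with \cref{smallrad} and \cref{projrad} to get $\im\varphi_n=\ker\varphi_{n-1}\subset P_{n-1}\mathrm{rad}(R)$, which forces $\varphi_n\otimes\mathrm{id}=0$. The only difference is that you spell out the identification of the tensor product and the $n=1$ case more explicitly than the paper does.
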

\begin{proof}
    Note that the homomorphism $\varphi_n\otimes {\rm id}$ is isomorphic to the homomorphism $P_n/P_n{\rm rad}(R) \too P_{n-1}/P_{n-1}{\rm rad}(R)$ induced from $\varphi_n$. We have ${\rm im}\varphi_n = {\rm ker}\varphi_{n-1} \subset {\rm rad}(P_{n-1}) = P_{n-1}{\rm rad}(R)$ by \cref{smallrad,projrad}, hence $\varphi_n\otimes {\rm id} = 0$.
\end{proof}
\section{Magnitude homology as a derived functor}\label{mhdf}
In this section, we briefly recall Asao-Ivanov's description (\cite{AI}) of magnitude homology as a derived functor by using explicit descriptions in \cref{comonad}. Let $(X, d)$ be a finite quasi metric space with $|X| = N$.
\begin{df}
We define a free abelian group $\mr{X} := \Z X^2_f$. We equip $\mr{X}$ with a product structure by $e_{xy}\cdot e_{zw} = \begin{cases}e_{xw} & x\leq y=z\leq w \\ 0 & \text{otherwise}\end{cases}$, which makes it an associative ring with a unit $\sum_{x \in X}e_{xx}$. Note that it is necessary that $X$ is a finite set to define the unit. We denote $e_{xx}$ by $e_x$ in the following.
\end{df}

Let $rX := \Z X^2_{f+}$ be an ideal of $\mr{X}$. Consider the quotient $ \mr{X}/rX \cong \bigoplus_{x \in X}\Z\langle e_x\rangle \cong \Z^{N}$ as a ``trivial'' $\mr{X}$-module. Moreover, we can consider this module $\Z^{N}$ as a subring of $\mr{X}$. Now we consider a ring inclusion $\Z^{N} \too \mr{X}$ and apply the framework of Section \ref{comonad}. Note that the $\mr{X}$-module $\Z^N$ is projective as a $\Z^N$-module, and $\mr{X}$ is a free $\Z^N$-module with a basis $\{\sum_{e_{yz} \in X^2_f}e_{x}\cdot e_{yz}\}_{x \in X}$. Hence we obtain a projective resolution of $\mr{X}$-module $\Z^N$ by \cref{barresol}:

\[
    \dots \xlongrightarrow{\del_{n+1}} \Z^N\otimes^{n+1}_{\Z^N}\mr{X} \xlongrightarrow{\del_{n}}  \Z^N\otimes^{n}_{\Z^N}\mr{X}  \xlongrightarrow{\del_{n-1}} \dots \xlongrightarrow{\del_{1}} \Z^N\otimes_{\Z^N}\mr{X} \xlongrightarrow{\del_{0}} \Z^N \too 0.
\]

Here we can describe the $\mr{X}$-module $ \Z^N\otimes^{n+1}_{\Z^N}\mr{X} $ as
\[
 \Z^N\otimes^{n+1}_{\Z^N}\mr{X} \cong \Z X^{n+2}_f,
\]
where the $\mr{X}$-action is described as $(x_0, \dots, x_{n+1})e_{zw} = \begin{cases} (x_0, \dots, x_{n}, w) & x_n\leq x_{n+1} = z \leq w, \\ 0 & \text{otherwise}.\end{cases}$ Further, the homomorphism $\del_n$ is described as $\del_n=\sum_{i=0}^n(-1)^i\del_{n, i}$ with
\begin{align*}
\del_{n, i}(x_0, \dots, x_{n+1}) &= \begin{cases} (x_0, \dots, \hat{x}_i, \dots, x_{n+1}) & x_{i-1}\leq x_i \leq x_{i+1}, \\ 0 & \text{otherwise}, \end{cases}
\end{align*}
for $0\leq i \leq n$, where we formally set $x_{-1} = x_1$ and $\hat{x}$ means deletion of the term $x$. Note here that $x_{-1}\leq x_0 \leq x_1$ implies $x_0 = x_1$. (The obtained projective resolution will be denoted by $\MResol_\ast(X)$ in \cref{comparison}.)

By applying $-\otimes_{\mr{X}}\Z^N$ to this chain complex, we obtain a chain complex $(\wt{\MC}_\ast(X), \wt{\del}_\ast = \sum_{i=0}^\ast (-1)^i\wt{\del}_{n, i})$ where
\begin{align*}
\wt{\MC}_n(X) &= \Z X^{n+1}_f \\
\wt{\del}_{n, i} (x_0, \dots, x_{n}) &= \begin{cases} (x_0, \dots, \hat{x}_i, \dots, x_{n}) & x_{i-1}\leq x_i \leq x_{i+1}, \\ 0 & \text{otherwise}, \end{cases}
\end{align*}
for $0\leq i \leq n$, where we formally set $x_{-1} = x_1$ and $x_{n+1} = x_{n-1}$. This chain complex is exactly the {\it unnormalized magnitude chain complex of} $X$ defined in \cite{LS} Definition 5.7. Its homology is called {\it magnitude homology of} $X$. Namely we obtain the following definition of the magnitude homology as a derived functor.

\begin{df}[\cite{AI}]
For a finite quasi metric space $X$, we define the magnitude homology by $\MH_n(X) = \Tor_n^{\mr{X}}(\Z^N, \Z^N)$.
\end{df}

\begin{rem}
    \label{rem:reducedMC}
    In \cite{LS}, they also defined the {\it normalized magnitude chain complex} $(\MC_\ast(X), \del_\ast)$ which is obtained from a projective resolution of $\Z^N$ in \cref{reduced} with $\overline{S} =  \Z X^2_{f+}$.
\end{rem}
\section{Minimal projective resolution for finite geodetic quasi metric spaces}\label{mprgeodsec}
In this section, we construct a minimal projective resolution of the trivial module $\Z^N$ in \cref{mhdf} for a finite geodetic quasi metric space $(X, d)$ with $|X| = N$. 
\subsection{Definition of $\kappa$ and $\beta$}
Let $X$ be a finite quasi metric space.
First we describe the kernel and image of multiplication maps.
\begin{df}
\begin{enumerate}
\item For all subset  $S \subset X^2_f$, we define a sub abelian group $\bigoplus_{e \in S}\Z\langle e\rangle$ of $\mr{X}$ and denote it by $\Z S$.
\item For all $e_{xy} \in X^2_f$ and $S \subset X^2_f$, we define
\begin{align*}
\kappa(S, e_{xy}) &= \{e_{zw} \in S \mid \neg (z\leq w=x\leq y)\}, \\
\iota(S, e_{xy}) &= \{e_{zy} \in X^2_f \mid e_{zx} \in S, z\leq x \leq y\}, \\
\kappa(e_{xy}, S) &= \{e_{zw} \in S \mid \neg (x\leq y=z\leq w)\}, \\
\iota(e_{xy}, S) &= \{e_{xw} \in X^2_f \mid e_{yw} \in S, x\leq y\leq w\}.\\
\end{align*}
\end{enumerate}
\end{df}
The following is obvious.
\begin{lem}\label{kapiota}
For all $e_{xy} \in X^2_f$ and $S \subset X^2_f$, we have
\begin{align*}
{\rm ker}(\rmult{xy}|_{\Z S}) &= \Z\kappa(S, e_{xy}), \\
{\rm im}(\rmult{xy}|_{\Z S}) &= \Z\iota(S, e_{xy}), \\
{\rm ker}(\lmult{xy}|_{\Z S}) &= \Z\kappa(e_{xy}, S), \\
{\rm im}(\lmult{xy}|_{\Z S}) &= \Z\iota(e_{xy}, S),\\
\end{align*}
where $\lmult{xy} = e_{xy}\cdot(-)$ and $\rmult{xy} = (-)\cdot e_{xy}$ are multiplication homomorphisms $\mr{X} \too \mr{X}$.
\end{lem}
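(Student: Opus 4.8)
The plan is to exploit the fact that multiplication by a fixed basis element of $\mr{X}$ acts on basis vectors as a \emph{partial bijection}. Concretely, for a fixed $e_{xy}\in X^2_f$, the homomorphism $\rmult{xy}$ sends each basis vector $e_{zw}$ of $\mr{X}$ either to $0$ or to a single basis vector $e_{zy}$, and dually $\lmult{xy}$ sends $e_{zw}$ to $0$ or to a single basis vector $e_{xw}$. Once this is established, the kernel of the restriction to $\Z S$ is freely spanned by the basis elements of $S$ that are killed, and the image is freely spanned by those that are not, and these two sets are by definition $\kappa(S,e_{xy})$, $\iota(S,e_{xy})$ (resp.\ $\kappa(e_{xy},S)$, $\iota(e_{xy},S)$).

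First I would unwind the product rule: $e_{zw}\cdot e_{xy}\neq 0$ if and only if $z\leq w=x\leq y$, in which case it equals $e_{zy}$; dually $e_{xy}\cdot e_{zw}\neq 0$ if and only if $x\leq y=z\leq w$, in which case it equals $e_{xw}$. In particular, whenever $e_{zw}\cdot e_{xy}\neq 0$ one has $w=x$, so by the triangle equality $d(z,y)=d(z,w)+d(w,y)<\infty$ since $e_{zw},e_{xy}\in X^2_f$; hence $e_{zy}\in X^2_f$, and symmetrically $e_{xw}\in X^2_f$. This confirms that the target sets $\iota(S,e_{xy})$ and $\iota(e_{xy},S)$ really consist of legitimate basis elements of $\mr{X}$.

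Next I would verify injectivity of the partial map: if $e_{zw}\cdot e_{xy}$ and $e_{z'w'}\cdot e_{xy}$ are both nonzero and equal, then $w=w'=x$ by the product rule, and $e_{zy}=e_{z'y}$ forces $z=z'$, so the two basis elements coincide; the same argument works for $\lmult{xy}$ with the index $z=y$ pinned down instead of $w=x$. Consequently the matrix of $\rmult{xy}|_{\Z S}$ in the standard basis has at most one nonzero entry (equal to $1$) in each row and each column, so $\ker(\rmult{xy}|_{\Z S})$ is freely spanned by the $e_{zw}\in S$ with $\neg(z\leq w=x\leq y)$, i.e.\ by $\kappa(S,e_{xy})$, and $\im(\rmult{xy}|_{\Z S})$ is freely spanned by the nonzero images $e_{zy}$ with $e_{zx}\in S$ and $z\leq x\leq y$, i.e.\ by $\iota(S,e_{xy})$. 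The statements for $\lmult{xy}$ follow verbatim after swapping the roles of the two indices.

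I do not expect any genuine obstacle: the only points requiring care are keeping track of which index is forced by the nonvanishing condition ($w=x$ for $\rmult{xy}$, $z=y$ for $\lmult{xy}$) and checking that the images lie in $X^2_f$, both of which are immediate consequences of the triangle equality. This is why the lemma is labelled obvious in the excerpt; the proposal above just makes the "partial permutation matrix" picture explicit.
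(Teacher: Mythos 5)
Your argument is correct, and since the paper simply declares the lemma ``obvious'' without any written proof, what you supply is exactly the natural argument being invoked implicitly: $\rmult{xy}$ and $\lmult{xy}$ act on the standard $\Z$-basis as partial permutations (each basis vector goes to $0$ or to a uniquely determined basis vector, injectively on the nonkilled ones), so kernel and image of the restriction to $\Z S$ are freely spanned by the killed generators and by the nonzero images, respectively, which are precisely $\kappa$ and $\iota$ by definition. Nothing further to compare.
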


The following definition and lemmas will be used  in our construction of a minimal projective resolution and the other computations.
\begin{df}
\begin{enumerate}
\item For a subset $S \subset X^2_f$, we define $\beta(S) = S\setminus (\bigcup_{e_{xy} \in X^2_{f+}} \iota(S, e_{xy}))$.

\item For all $e_{xy} \in X^2_f$, we define that $\kappa(e_{xy}) := \kappa(e_{xy}, (\{y\}\times X)\cap X^2_f)$. We also define that $\kappa(S) = \bigsqcup_{e_{xy}\in S}\kappa(e_{xy})$ for $S \subset X^2_f$.

\item We define that $\beta (\bigsqcup_i S_i) = \bigsqcup_i \beta(S_i)$ and $\kappa (\bigsqcup_i S_i) = \bigsqcup_i \kappa(S_i)$ for all family $\{S_i \subset X^2_f\}_i$, where $\bigsqcup$ is an external disjoint union.

\end{enumerate}
\end{df}

\begin{lem}\label{kappamod}
    For all $S \subset X^2_f$, the subset $\Z\kappa(S) \subset \bigoplus_{s \in S} \mr{X}$ is a right $\mr{X}$-submodule.
\end{lem}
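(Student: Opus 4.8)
The plan is to reduce the statement to a single fixed pair $e_{xy}\in X^2_f$ and to exhibit $\Z\kappa(e_{xy})$ as the kernel of a homomorphism of right $\mr{X}$-modules; then $\Z\kappa(S)$ is just the direct sum of these kernels, placed componentwise inside $\bigoplus_{s\in S}\mr{X}$.

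First I would record the elementary identity $e_y\cdot\mr{X} = \Z\bigl((\{y\}\times X)\cap X^2_f\bigr)$: since $e_y\cdot e_{zw}$ equals $e_{yw}$ when $z=y$ and $0$ otherwise, the left ideal $e_y\mr{X}$ is exactly the span of the $e_{yw}$ with $(y,w)\in X^2_f$. Moreover $e_y\mr{X}$ is a right $\mr{X}$-submodule of $\mr{X}$, because $(e_y r)s = e_y(rs)\in e_y\mr{X}$. Next, by definition $\kappa(e_{xy}) = \kappa\bigl(e_{xy},\,(\{y\}\times X)\cap X^2_f\bigr)$, so \cref{kapiota} applied with $S = (\{y\}\times X)\cap X^2_f$ gives $\Z\kappa(e_{xy}) = \ker\bigl(\lmult{xy}\big|_{e_y\mr{X}}\bigr)$. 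Now $\lmult{xy} = e_{xy}\cdot(-)\colon\mr{X}\too\mr{X}$ is a homomorphism of right $\mr{X}$-modules by associativity of the product, and $e_y\mr{X}$ is a right $\mr{X}$-submodule; hence its restriction $\lmult{xy}\big|_{e_y\mr{X}}$ is again a homomorphism of right $\mr{X}$-modules, and therefore its kernel $\Z\kappa(e_{xy})$ is a right $\mr{X}$-submodule of $\mr{X}$.

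Finally, unwinding the external disjoint union in $\kappa(S)=\bigsqcup_{e_{xy}\in S}\kappa(e_{xy})$, the inclusion $\Z\kappa(S)\subset\bigoplus_{s\in S}\mr{X}$ identifies $\Z\kappa(S)$ with the internal direct sum $\bigoplus_{e_{xy}\in S}\Z\kappa(e_{xy})$, the summand indexed by $e_{xy}$ lying in the $e_{xy}$-component of $\bigoplus_{s\in S}\mr{X}$. Since the $\mr{X}$-action on a direct sum is componentwise and each component $\Z\kappa(e_{xy})$ is a right $\mr{X}$-submodule by the previous step, $\Z\kappa(S)$ is a right $\mr{X}$-submodule.

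There is no real obstacle here; the only thing to be careful with is the bookkeeping that identifies $\Z\kappa(S)$ with $\bigoplus_{e_{xy}\in S}\Z\kappa(e_{xy})$ compatibly with the componentwise action. If one prefers a hands-on argument, it runs as follows: for $e_{yw}\in\kappa(e_{xy})$ and $e_{z'w'}\in X^2_f$, the product $e_{yw}\cdot e_{z'w'}$ is either $0$ or $e_{yw'}$ with $y\leq w\leq w'$; in the latter case, if $x\leq y\leq w'$ held, then $d(x,y)+d(y,w)+d(w,w') = d(x,w')$, and the triangle inequality would force $d(x,w)=d(x,y)+d(y,w)$, i.e.\ $x\leq y\leq w$, contradicting $e_{yw}\in\kappa(e_{xy})$; hence $e_{yw'}\in\kappa(e_{xy})$. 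This uses only that $X$ is a finite quasi metric space, not geodeticity.
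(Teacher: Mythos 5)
Your proof is correct and elaborates exactly what the paper's one-line proof (``it follows from the associativity of the product on $\mr{X}$'') leaves implicit: associativity makes $\lmult{xy}$ a right $\mr{X}$-module map, so $\Z\kappa(e_{xy}) = \ker(\lmult{xy}|_{e_y\mr{X}})$ is a submodule, and the external disjoint union reduces $\Z\kappa(S)$ to a componentwise direct sum of these kernels. The hands-on verification at the end is also sound, and your observation that geodeticity is never used matches the paper's placement of this lemma before the geodeticity hypotheses enter.
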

\begin{proof}
    It follows from the associativity of the product on $\mr{X}$.
\end{proof}

\begin{lem}\label{betakappan}
    For $n\geq 1$ and $S \subset X^2_f$, we have
    \[
    (\beta\kappa)^n(S) = \bigsqcup_{(x_0, x_1)\in S}\bigsqcup_{(x_1, x_2) \in \beta\kappa(e_{x_0x_1})}\dots \bigsqcup_{(x_{n-1}, x_n) \in \beta\kappa(e_{x_{n-2}x_{n-1}})}\beta\kappa(e_{x_{n-1}x_n}).
    \]

\end{lem}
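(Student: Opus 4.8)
The plan is to prove the identity by induction on $n$, purely by unwinding definitions; no use is made of \cref{kappamod} or of geodeticity, so this is a formal bookkeeping argument. The only facts needed are the definition of $\kappa(e_{xy})$, the definitions of $\kappa$ and $\beta$ on subsets of $X^2_f$, and the stipulated compatibilities $\kappa\bigl(\bigsqcup_i S_i\bigr)=\bigsqcup_i\kappa(S_i)$ and $\beta\bigl(\bigsqcup_i S_i\bigr)=\bigsqcup_i\beta(S_i)$.

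For the base case $n=1$: by definition $\kappa(S)=\bigsqcup_{e_{x_0x_1}\in S}\kappa(e_{x_0x_1})$ is an external disjoint union of subsets of $X^2_f$, so applying $\beta$ termwise gives $(\beta\kappa)(S)=\bigsqcup_{(x_0,x_1)\in S}\beta\kappa(e_{x_0x_1})$. This is the claimed formula for $n=1$, in which there are no intermediate disjoint unions and $x_{n-1}=x_0$, $x_n=x_1$.

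For the inductive step I would first make explicit the combinatorial bookkeeping encoded in the claimed formula: it exhibits $(\beta\kappa)^n(S)$ as an external disjoint union indexed by the ``paths'' $p=(x_0,\dots,x_n)$ with $(x_0,x_1)\in S$ and $(x_j,x_{j+1})\in\beta\kappa(e_{x_{j-1}x_j})$ for $1\le j\le n-1$, the $p$-th component being the subset $\beta\kappa(e_{x_{n-1}x_n})\subset X^2_f$, whose points are exactly the elements $e_{x_nx_{n+1}}$ of $X^2_f$. Applying $\kappa$ componentwise turns this into the disjoint union indexed by pairs $(p,(x_n,x_{n+1}))$ with $e_{x_nx_{n+1}}\in\beta\kappa(e_{x_{n-1}x_n})$, the component being $\kappa(e_{x_nx_{n+1}})$; then applying $\beta$ componentwise replaces that component by $\beta\kappa(e_{x_nx_{n+1}})$. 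Since the index set of $p$ is itself the chain $\bigsqcup_{(x_0,x_1)\in S}\bigsqcup_{(x_1,x_2)\in\beta\kappa(e_{x_0x_1})}\cdots\bigsqcup_{(x_{n-1},x_n)\in\beta\kappa(e_{x_{n-2}x_{n-1}})}$, concatenating it with the new union over $(x_n,x_{n+1})$ produces exactly the claimed expression with $n$ replaced by $n+1$, which completes the induction since $(\beta\kappa)^{n+1}(S)=(\beta\kappa)\bigl((\beta\kappa)^n(S)\bigr)$.

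The step I expect to require the most care — though it is not a genuine difficulty — is keeping track of the \emph{family} structure (not merely the underlying set) of $(\beta\kappa)^n(S)$ across the iteration, since $\kappa$ and $\beta$ are defined on families of subsets of $X^2_f$; and recognizing that the coordinate of a point of $(\beta\kappa)^n(S)$ to which $\kappa$ is next applied is precisely the last pair $e_{x_nx_{n+1}}$ of the associated tuple, which is where the defining clause $\kappa(e_{xy})\subset(\{y\}\times X)\cap X^2_f$ enters. Once the types are pinned down, the computation above is forced and the rest is routine reindexing.
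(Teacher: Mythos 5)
Your proposal is correct and follows essentially the same route as the paper: induction on $n$, with the base case read off from the definitions and the inductive step carried out by applying $\kappa$ and then $\beta$ componentwise across the external disjoint union and reindexing the result. Your explicit ``paths'' bookkeeping is a more verbose rendering of the paper's one-line appeal to $\kappa(\bigsqcup_i S_i) = \bigsqcup_i\bigsqcup_{s_i \in S_i}\kappa(s_i)$, but the content is the same.
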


\begin{proof}
    We prove it by induction. For $n=1$, it is obvious that $\beta\kappa(S) = \beta(\bigsqcup_{(x, y) \in S}\kappa(e_{xy})) = \bigsqcup_{(x, y) \in S}\beta\kappa(e_{xy})$  from the definition. Assume that it is true up to $n$. Note that we have $\kappa(\bigsqcup_iS_i) = \bigsqcup_i\bigsqcup_{s_i \in S_i}\kappa(s_i)$ for all family $\{S_i \subset X^2_f\}_i$. Then we have
    \begin{align*}
        &(\beta\kappa)^{n+1}(S)\\
        &= \beta\kappa \left(\bigsqcup_{(x_0, x_1)\in S}\dots \bigsqcup_{(x_{n-1}, x_n) \in \beta\kappa(e_{x_{n-2}x_{n-1}})}\beta\kappa(e_{x_{n-1}x_n}) \right) \\
         &= \beta \left(\bigsqcup_{(x_0, x_1)\in S}\dots \bigsqcup_{(x_{n-1}, x_n) \in \beta\kappa(e_{x_{n-2}x_{n-1}})}\bigsqcup_{(x_n, x_{n+1})\in \beta\kappa(e_{x_{n-1}x_n})}\kappa(e_{x_nx_{n+1}}) \right) \\
         &= \bigsqcup_{(x_0, x_1)\in S}\dots \bigsqcup_{(x_{n-1}, x_n) \in \beta\kappa(e_{x_{n-2}x_{n-1}})}\bigsqcup_{(x_n, x_{n+1})\in \beta\kappa(e_{x_{n-1}x_n})} \beta\kappa(e_{x_nx_{n+1}}).
    \end{align*}
\end{proof}

\begin{lem}\label{geodetic}
For a quasi metric space $(X, d)$, the following are equivalent.
\begin{enumerate}
    \item $(X, d)$ is geodetic,
    \item For all $S \subset X^2_f$ and for all $e_{xy}\neq e_{zw} \in \beta(S)$, we have $\iota(e_{xy}, X^2_f)\cap \iota(e_{zw}, X^2_f) = \emptyset$.
 \end{enumerate}
\end{lem}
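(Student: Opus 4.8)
The plan is to prove the two implications directly, after first rewriting the operations $\iota$ and $\beta$ on subsets of $X^2_f$ in terms of the relation $\leq$ on $X$ alone. The two reformulations I would record are: (a) $\iota(e_{xy},X^2_f)$ consists of the $e_{xc}\in X^2_f$ with $x\leq y\leq c$, so that $\iota(e_{xy},X^2_f)\cap\iota(e_{zw},X^2_f)\neq\emptyset$ \emph{if and only if} $x=z$ and $y,w$ lie in a common interval $I_X(x,c)$; and (b) $e_{xy}\in\beta(S)$ \emph{if and only if} $e_{xy}\in S$ and there is no $u\neq y$ with $e_{xu}\in S$ and $x\leq u\leq y$ --- that is, $e_{xy}$ is the only element of $S$ of the form $e_{xu}$ with $u\in I_X(x,y)$.

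For $(1)\Rightarrow(2)$: assuming $X$ geodetic, I would fix $S\subset X^2_f$ and distinct $e_{xy},e_{zw}\in\beta(S)$, and suppose for contradiction that $\iota(e_{xy},X^2_f)\cap\iota(e_{zw},X^2_f)\neq\emptyset$. By (a), $x=z$ and $y,w\in I_X(x,c)$ for some $c$; geodeticity makes $I_X(x,c)$ totally ordered, so after swapping $y$ and $w$ if necessary I may assume $x\leq y\leq w$. If $y=w$ then $e_{xy}=e_{zw}$, contradicting distinctness. If $y\neq w$ then $e_{xy}\in S$, together with $y\neq w$ and $x\leq y\leq w$, shows via (b) that $e_{xw}=e_{zw}\notin\beta(S)$, contradicting $e_{zw}\in\beta(S)$. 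Hence the intersection is empty.

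For $(2)\Rightarrow(1)$: I would argue the contrapositive. If $X$ is not geodetic, choose $a,b\in X$ with $d(a,b)<\infty$ for which $I_X(a,b)$ is not totally ordered, and pick $\preceq$-incomparable $y,w\in I_X(a,b)$; then $y\neq w$, and $y,w\neq a$ since $a$ is the minimum of $I_X(a,b)$. Put $S=\{e_{ay},e_{aw}\}\subset X^2_f$. Incomparability means $\neg(a\leq y\leq w)$ and $\neg(a\leq w\leq y)$, so by (b) both $e_{ay}$ and $e_{aw}$ lie in $\beta(S)$, and they are distinct. Since $a\leq y\leq b$ and $a\leq w\leq b$, the element $e_{ab}$ lies in $\iota(e_{ay},X^2_f)\cap\iota(e_{aw},X^2_f)$, so (2) fails for this $S$, proving the contrapositive.

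I do not anticipate a real obstacle; the only step requiring attention is the bookkeeping for (a) and (b) --- in particular that the finiteness conditions implicit in ``$\in X^2_f$'' are automatically met (using $d(x,y)<\infty$), and that in the contrapositive the incomparable witnesses $y,w$ are genuinely distinct and different from $a$ --- after which both implications follow at once from geodeticity and its failure.
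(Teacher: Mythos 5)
Your proof is correct and follows essentially the same route as the paper. The reformulations (a) and (b) of $\iota$ and $\beta$ that you record are exactly the observations the paper uses implicitly, and both directions proceed the same way: for $(1)\Rightarrow(2)$ you apply total ordering of the common interval to deduce (WLOG) $x\leq y\leq w$ and then use (b) to see one of the two pairs gets removed by $\beta$, and for $(2)\Rightarrow(1)$ you set $S=\{e_{ay},e_{aw}\}$ and observe that $e_{ab}$ witnesses the nonemptiness of the intersection; the only cosmetic difference is that the paper argues $(2)\Rightarrow(1)$ directly (showing any two elements of $I_X(a,b)$ are comparable) whereas you argue the contrapositive, which is logically identical.
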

\begin{proof}
   $(1)\Rightarrow (2)$: Let $e_{xy}\neq e_{zw} \in \beta(S)$ and $e_{ab} \in \iota(e_{xy}, X^2_f)\cap \iota(e_{zw}, X^2_f)$. Namely we have $x = z = a, y\neq w$ and $x\leq y \leq b, z \leq w \leq b$. Since $X$ is geodetic and $d(a, b)<\infty$, we have $z \leq y \leq w$ or $x \leq w \leq y$, that is $e_{zw} = e_{zy}e_{yw}$ or $e_{xy} = e_{xw}e_{wy}$ respectively. In both cases, by $y\neq w$,  we obtain that $e_{zw} \not\in \beta(S)$ or $e_{xy} \not\in \beta(S)$ that are contradictions. Hence we obtain that  $\iota(e_{xy}, X^2_f)\cap \iota(e_{zw}, X^2_f) = \emptyset$.

   $(2)\Rightarrow (1)$: Let $d(a, b) < \infty$ and $x \neq y \in I(a, b)$, namely we have $a\leq x\neq y \leq b$. Let $S = \{e_{ax}, e_{ay}\}$ and suppose that $\beta(S) =  \{e_{ax}, e_{ay}\}$. Then we have $\iota(e_{ax}, X^2_f)\cap \iota(e_{ay}, X^2_f) = \emptyset$, which contradicts that $e_{ax}e_{xb} = e_{ay}e_{yb} = e_{ab}$. Hence we obtain that $\beta(S)\neq S$ which is equivalent to that $a\leq x \leq y$ or $a \leq y \leq x$, namely $I_X(a, b)$ is totally ordered.
\end{proof}

\subsection{Construction of the resolution}

Recall that $rX := \Z X^2_{f+}$ is an ideal of $\mr{X}$. Also recall that any module and ideal are right ones.
\begin{lem}\label{rx}
The ideal $rX$ is the Jacobson radical of $\mr{X}$.
\end{lem}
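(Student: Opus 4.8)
The plan is to prove the two inclusions $rX \subseteq {\rm rad}(\mr{X})$ and ${\rm rad}(\mr{X}) \subseteq rX$, using the invertibility criterion for the Jacobson radical from \cref{invertible} together with the quotient formula \cref{radquot}.

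For $rX \subseteq {\rm rad}(\mr{X})$, I would first show that $rX$ is a nilpotent ideal. Unwinding the product of $\mr{X}$: a product $e_{a_1b_1}\cdots e_{a_kb_k}$ of $k$ generators of $rX$ is nonzero only when $b_i = a_{i+1}$ and $a_i \le b_i \le b_{i+1}$ at every step, in which case it collapses to the single generator $e_{a_1b_k}$ with $d(a_1,b_k) = \sum_{i=1}^{k} d(a_i,b_i)$. Since $X$ is finite, $\delta := \min\{\, d(x,y) \mid (x,y)\in X^2_{f+}\,\}$ is strictly positive and $D := \max\{\, d(x,y) \mid (x,y)\in X^2_f\,\}$ is finite, so every length-$k$ product vanishes as soon as $k\delta > D$; hence $(rX)^k = 0$ for $k$ large. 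Consequently, for every $m \in rX$ and every $s \in \mr{X}$ the element $ms \in rX$ is nilpotent, so $1 - ms$ is invertible (its inverse being a finite geometric series), and \cref{invertible} yields $m \in {\rm rad}(\mr{X})$.

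For the reverse inclusion, I would compute ${\rm rad}(\mr{X}/rX)$. The quotient ring $\mr{X}/rX$ is isomorphic to $\Z^N = \prod_{x\in X}\Z$, with the classes of the $e_x$ becoming the standard orthogonal idempotents; its Jacobson radical is zero because ${\rm rad}(\Z) = 0$ and the Jacobson radical of a finite product of rings is the product of the Jacobson radicals. Having already shown $rX \subseteq {\rm rad}(\mr{X})$, I can apply \cref{radquot} with $M = \mr{X}$ and $N = rX$ to get $0 = {\rm rad}(\mr{X}/rX) = {\rm rad}(\mr{X})/rX$, and therefore ${\rm rad}(\mr{X}) = rX$.

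The only step requiring a little care is the collapsing-of-products computation underlying the nilpotency of $rX$; everything else is standard finite-length ring theory, and I do not anticipate a genuine obstacle. It is worth noting that this is precisely where the finiteness of $X$ is used (the same hypothesis needed for $\mr{X}$ to have a unit): without it, $\delta$ could degenerate to $0$ and $rX$ need not be nilpotent.
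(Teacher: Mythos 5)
Your proof is correct and follows essentially the same route as the paper: nilpotency of $rX$ from the finiteness of $X$, the geometric-series invertibility of $1-rs$ together with \cref{invertible} to get $rX \subseteq {\rm rad}(\mr{X})$, and then \cref{radquot} plus ${\rm rad}(\Z^N)=0$ for the reverse inclusion. Your argument only spells out the nilpotency step (minimal positive distance $\delta$ versus the bound $D$) in more detail than the paper, which simply cites boundedness of $d$ on $X^2_f$.
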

\begin{proof}
Since $X$ is a finite set, the function $d$ is bounded on  $X^2_f$  and hence $rX$ is a nilpotent ideal, namely we have $(rX)^k = 0$ for some $k$. Then for all $r \in rX$ and $s \in \mr{X}$,  $(1-rs)$ is invertible since we have $(1-rs)\sum_{i=0}^k(rs)^i = 1$. Hence we obtain that $rX \subset {\rm rad}(\mr{X})$ by \cref{invertible}.  Now we have ${\rm rad}(\mr{X})/ rX\cong {\rm rad}(\mr{X}/ rX) \cong {\rm rad}\Z^n \cong ({\rm rad}\Z)^n \cong 0$ by \cref{radcoprod,radquot}, hence $rX = {\rm rad}(\mr{X})$.
\end{proof}
\begin{lem}\label{emr}
$e_x \in \mr{X}$ is an idempotent for all $x \in  X$. Hence ${\rm im}(\lmult{x}) = e_x\mr{X}$ is a projective  $\mr{X}$-module.
\end{lem}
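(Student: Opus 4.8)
The plan is short: verify idempotency by a direct computation with the multiplication rule on $\mr{X}$, and then obtain projectivity from the classical fact that the right ideal generated by an idempotent is a direct summand of the regular module.

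First I would evaluate $e_x\cdot e_x$. By definition the product $e_{xy}\cdot e_{zw}$ equals $e_{xw}$ when $x\leq y = z\leq w$ and is $0$ otherwise. Setting $x = y = z = w$, the condition $x\leq x = x\leq x$ amounts to $d(x,x) + d(x,x) = d(x,x)$, which holds because $d(x,x) = 0$. Hence $e_x\cdot e_x = e_{xx} = e_x$, so $e_x$ is idempotent.

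The equality $\im(\lmult{x}) = e_x\mr{X}$ is then a matter of unwinding notation, since $\lmult{x} = e_x\cdot(-)$ has image $\{e_x\cdot r\mid r\in\mr{X}\} = e_x\mr{X}$. To see that $e_x\mr{X}$ is projective I would use that $\sum_{y\in X}e_y$ is the unit of $\mr{X}$ --- here the finiteness of $X$ enters, exactly as it did to define the unit --- so that $1 - e_x = \sum_{y\neq x}e_y$ makes sense and is also idempotent. Because $e_x$ is idempotent, there is an internal decomposition $\mr{X} = e_x\mr{X}\oplus(1-e_x)\mr{X}$ of right $\mr{X}$-modules: any $r$ equals $e_xr + (1-e_x)r$, and if $e_xa = (1-e_x)b$ then left multiplication by $e_x$ gives $e_xa = e_x^2a = e_x(1-e_x)b = 0$, so the two summands intersect in $0$. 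Thus $e_x\mr{X}$ is a direct summand of $\mr{X}$ regarded as a free right module over itself, hence projective.

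I do not expect any real obstacle in this argument; the only point worth flagging is that both the existence of the unit $1$ and of the complementary idempotent $1-e_x$ rely on $X$ being finite, an assumption that is standing throughout this section.
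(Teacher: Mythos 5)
Your proof is correct and follows essentially the same route as the paper: both establish idempotency of $e_x$ from $d(x,x)=0$ and then deduce projectivity by exhibiting $e_x\mr{X}$ as a direct summand of the free module $\mr{X}$. The paper phrases this via the retraction $\lmult{x}:\mr{X}\to e_x\mr{X}$ of the inclusion, while you make the same splitting explicit through the complementary idempotent $1-e_x$; these are two descriptions of one and the same decomposition.
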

\begin{proof}
    It is obvious that $e_xe_x = e_x$. Hence the inclusion $e_x\mr{X} \too \mr{X}$ has a retraction $\lmult{x} : \mr{X} \too e_x\mr{X}$, which implies that $e_x\mr{X}$ is a direct summand of $\mr{X}$.
\end{proof}

\begin{prop}\label{mpr1}
Let $X$ be a finite geodetic quasi metric space and $S \subset X^2_f$. If $\Z S$ is a  submodule of $\mr{X}$, then it has a projective cover $\varphi : P \too \Z S$ as a $\mr{X}$-module such that
\begin{enumerate}
\item $P = \bigoplus_{e_{xy} \in \beta(S)}e_{y}\mr{X}$,
\item $\varphi = \bigoplus_{e_{xy} \in \beta(S)}(\lmult{xy})|_{e_{y}\mr{X}}$,
\item ${\rm ker}\varphi = \Z\kappa\beta(S)$.
\end{enumerate}

Consequently, $\Z S$ admits a minimal projective resolution $P_\ast$ with boundary operators $\varphi_i : P_i \too P_{i-1}$ as a $\mr{X}$-module such that
\begin{enumerate}
\item $P_n = \bigoplus_{e_{xy} \in (\beta\kappa)^n\beta(S)}e_{y}\mr{X}$,
\item $\varphi_n = \bigoplus_{e_{xy} \in (\beta\kappa)^n\beta(S)}(\lmult{xy})|_{e_{y}\mr{X}}$,
\item ${\rm ker}\varphi_n = \Z(\kappa\beta)^{n+1}S$,
\end{enumerate}
here we note that the set $(\beta\kappa)^n\beta(S)$ is a subset of $\bigsqcup_{i\in I}X^2_f$ for some index set $I$.
\end{prop}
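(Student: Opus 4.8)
The plan is to first establish the single-step statement about the projective cover $\varphi : P \too \Z S$, and then bootstrap to the full resolution by iteration. For the single step, the key observation is that $\Z S \cdot rX$ is spanned, by \cref{kapiota}, by $\bigcup_{e_{xy}\in X^2_{f+}}\iota(S, e_{xy})$, so the quotient $\Z S / (\Z S)(rX)$ is the free abelian group on $\beta(S)$, which by \cref{rx} is exactly $\Z S \otimes_{\mr X}(\mr X / rX)$. Now I would form the candidate cover $P = \bigoplus_{e_{xy}\in\beta(S)} e_y\mr X$ — this is projective by \cref{emr} — with the map $\varphi$ given summand-wise by left multiplication $\lmult{xy}|_{e_y\mr X}$; note that $\lmult{xy}(e_y) = e_{xy}\in\Z S$ since $e_{xy}\in\beta(S)\subset S$ and $x\leq y = y\leq y$, so $\varphi$ does land in $\Z S$. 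The map $\varphi$ is surjective because modulo $(\Z S)(rX)$ it hits every basis element of $\beta(S)$, and $\Z S$ is finitely generated, so by Nakayama (\cref{nakayama}, using that $(\Z S)(rX) = (\Z S)\,{\rm rad}(\mr X)$ is small) surjectivity mod the radical implies surjectivity. Combined with the fact that $\ker\varphi\subset P\,{\rm rad}(\mr X) = {\rm rad}(P)$ — which needs checking — this makes $\varphi$ an essential epimorphism, hence a projective cover. I would verify $\ker\varphi = \Z\kappa\beta(S)$ by a direct computation: $\ker(\lmult{xy}|_{e_y\mr X}) = \Z\kappa(e_{xy},(\{y\}\times X)\cap X^2_f) = \Z\kappa(e_{xy})$ by \cref{kapiota}, and here geodeticity enters crucially — by \cref{geodetic}(2), for distinct $e_{xy}\neq e_{zw}\in\beta(S)$ the images $\iota(e_{xy},X^2_f)$ and $\iota(e_{zw},X^2_f)$ are disjoint, so the images of the summands of $\varphi$ meet only in $0$, forcing $\ker\varphi = \bigoplus_{e_{xy}\in\beta(S)}\ker(\lmult{xy}|_{e_y\mr X}) = \bigsqcup_{e_{xy}\in\beta(S)}\Z\kappa(e_{xy}) = \Z\kappa\beta(S)$ as a submodule of $P$ (using \cref{kappamod} for the module structure).

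For the resolution, I would iterate: once we know $\ker\varphi = \Z\kappa\beta(S)$ is of the form $\Z S'$ for $S' = \kappa\beta(S)\subset\bigsqcup_i X^2_f$ inside a free $\mr X$-module, the same construction applies to it — the one-step statement is phrased for submodules $\Z S$ of $\mr X$, but it extends verbatim to submodules of a finite direct sum $\bigoplus_i\mr X$ since $\beta$, $\kappa$, $\iota$ all respect the disjoint-union decomposition (this is exactly the content of the displayed formulas in the definitions of $\beta(\bigsqcup_i S_i)$ and $\kappa(\bigsqcup_i S_i)$, and of \cref{betakappan}). So setting $P_n = \bigoplus_{e_{xy}\in(\beta\kappa)^n\beta(S)} e_y\mr X$ with $\varphi_n$ the corresponding sum of left multiplications, each $\varphi_n : P_n \too \ker\varphi_{n-1}$ is a projective cover by the one-step result, and $\ker\varphi_n = \Z(\kappa\beta)^{n+1}S$; by definition the resulting $P_\ast\too\Z S$ is then a minimal projective resolution. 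One should also check exactness at $\Z S$ itself, i.e. that $\varphi_0=\varphi$ surjects — this is already done above.

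The main obstacle I anticipate is the inclusion $\ker\varphi\subset{\rm rad}(P)$ needed for essentiality. It is tempting to argue $\ker\varphi = \Z\kappa\beta(S) \subset (\bigoplus e_y\mr X)\cdot rX$ directly, but this requires knowing that every element of $\kappa(e_{xy})$, i.e. every $e_{zw}\in(\{y\}\times X)\cap X^2_f$ with $z = y$ and $\neg(x\leq y = y\leq w)$... wait — more carefully, one wants $\kappa\beta(S)$ to consist of pairs with distinct coordinates so that it lands in $P\cdot rX$. The clean route is instead the abstract one: $\varphi$ is surjective and $P$ is projective, and by \cref{2-3} applied to the factorization through $P/P\,{\rm rad}(\mr X) \xrightarrow{\sim} \Z S/(\Z S)\,{\rm rad}(\mr X)$, essentiality of $\varphi$ follows from essentiality of the projection $P\too P/P\,{\rm rad}(\mr X)$, which is essential because $P\,{\rm rad}(\mr X) = {\rm rad}(P)$ is small over the finitely generated module $P$ by \cref{projrad,nakayama}. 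This sidesteps the explicit radical computation entirely. The only genuinely space-dependent input is geodeticity, used exactly once, to get the kernel-of-$\varphi$ formula via the disjointness in \cref{geodetic}; everywhere else the argument is formal.
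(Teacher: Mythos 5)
Your proposal is correct and follows essentially the same route as the paper's proof: identify $\Z S/(\Z S)(rX)\cong\Z\beta(S)$, obtain a projective cover via Nakayama-type smallness of the radical, use geodeticity through \cref{geodetic} and \cref{kapiota} to make the images of the summands disjoint and hence split $\ker\varphi$ as $\Z\kappa\beta(S)$, and then iterate using the disjoint-union conventions for $\beta$ and $\kappa$. The only cosmetic differences are that you define $\varphi$ directly as a sum of left multiplications and get essentiality from $\ker\varphi\subset P\,\mathrm{rad}(\mr{X})$, while the paper produces $\varphi$ by lifting along the projectivity of $P$ and applies \cref{2-3} to $\pi$ and $\pi'$; these are interchangeable.
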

\begin{proof}
First note that the $\mr{X}$-module $P = \bigoplus_{e_{xy} \in \beta(S)}e_{y}\mr{X}$ is projective by \cref{emr}, and that $\Z\kappa\beta(S)$ is a $\mr{X}$-module by \cref{kappamod}.  We also note that $(\Z S)(rX) = \sum_{e_{xy} \in X^2_{f+}}\Z\iota(S, e_{xy}) = \Z(\bigcup_{e_{xy} \in X^2_{f+}}\iota(S, e_{xy}))$. Hence we have $\Z S/(\Z S)(rX) \cong \Z \beta(S)$. Now we consider the following diagram :
\begin{equation*}
\xymatrix{
\bigoplus_{e_{xy} \in \beta(S)}e_{y}\mr{X} \ar[d]_{\pi'} \ar@{-->}[r]^{\varphi} & \Z S \ar[d]^{\pi} \\
\Z \beta(S) \ar[r]^{\cong} & \Z S/(\Z S)(rX),
}
\end{equation*}
where the left vertical arrow $\pi'$ is a direct sum of  quotients $e_{y}\mr{X} \too e_{y}\mr{X}/(e_{y}\mr{X})(rX)\cong \Z$, and the dotted arrow $\varphi$ is induced from the universality of the projective module $\bigoplus_{e_{xy} \in \beta(S)}e_{y}\mr{X}$. Since $S$ is a finite set, the homomorphism $\pi$ is an essential epimorphism by \cref{nakayama,rx}. Similarly, the homomorphism $\pi'$ is an essential epimorphism, and so is $\varphi$ by \cref{2-3}. Hence $\varphi$ is a projective cover. We can suppose that the homomorphism $\varphi$ is the sum of the left multiplications $ \lmult{xy} : e_{y}\mr{X} \too \Z S$ for all $e_{xy} \in \beta(S)$. By the geodeticity assumption, Lemmas \ref{kapiota} and \ref{geodetic}, we have ${\rm im}\ \lmult{xy} \cap {\rm im}\ \lmult{zw} = \Z(\iota(e_{xy}, X^2_f)\cap \iota(e_{zw}, X^2_f)) = 0$ for all $e_{xy}\neq e_{zw} \in \beta(S)$. Therefore, we have ${\rm ker} \varphi = \bigoplus_{e_{xy} \in \beta(S)}{\rm ker}(\lmult{xy}|_{e_{y}\mr{X}}) = \bigoplus_{e_{xy} \in \beta(S)}\Z \kappa(e_{xy}, (\{y\}\times X)\cap X^2_f) = \Z \kappa(\beta(S))$.
\end{proof}

Now we are ready to construct a minimal projective resolution
of the trivial \(\mr{X}\)-module.
\begin{prop}\label{mprgeod}
Let $X$ be a finite geodetic quasi metric space. Then the trivial $\mr{X}$-module $\mr{X}/rX \cong \Z^N$ admits a minimal projective resolution $P_\ast \too \Z^N$ with boundary operators $\varphi_n : P_n \too P_{n-1}$ as a $\mr{X}$-module such that
\begin{enumerate}
\item $P_0 = \mr{X}$ and $P_n = \bigoplus_{e_{xy} \in (\beta\kappa)^{n-1}\beta(X^2_{f+})}e_{y}\mr{X}$ for $n\geq 1$,
\item  $\varphi_0$ is the quotient map $\mr{X} \too \mr{X}/ rX$ and $\varphi_n = \bigoplus_{e_{xy} \in (\beta\kappa)^{n-1}\beta(S)}(\lmult{xy}|_{e_{y}\mr{X}})$ for $n\geq 1$,
\item ${\rm ker}\varphi_n = \Z(\kappa\beta)^{n}(X^2_{f+})$ for $n\geq 0$.
\end{enumerate}
\end{prop}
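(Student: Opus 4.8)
The plan is to assemble the resolution from two pieces: a projective cover of $\Z^N$ in degree $0$, and a minimal projective resolution of its kernel obtained from the ``Consequently'' assertion of \cref{mpr1}. For degree $0$, set $P_0 = \mrx$ and let $\varphi_0 \colon \mrx \too \mrx/rX \cong \Z^N$ be the quotient map. The module $\mrx$ is free of rank one over itself, hence projective and finitely generated; by \cref{rx} the Jacobson radical of $\mrx$ is the two-sided ideal $rX$, so $\mrx \cdot rX = rX$, and Nakayama's lemma (\cref{nakayama}, applied with $M = \mrx$) says this is small in $\mrx$. Since $\ker\varphi_0 = rX$, the map $\varphi_0$ is an essential epimorphism with projective source, i.e.\ a projective cover. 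This establishes items (1)--(3) of the statement in degree $0$, with $\ker\varphi_0 = rX = \Z X^2_{f+} = \Z(\kappa\beta)^0(X^2_{f+})$.

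Next, $\ker\varphi_0 = \Z X^2_{f+}$ is a submodule of $\mrx$ of exactly the form to which \cref{mpr1} applies, with $S = X^2_{f+}$. Its ``Consequently'' assertion then yields a minimal projective resolution $\cdots \too Q_1 \xrightarrow{\psi_1} Q_0 \xrightarrow{\psi_0} \Z X^2_{f+} \too 0$ with $Q_n = \bigoplus_{e_{xy} \in (\beta\kappa)^n\beta(X^2_{f+})} e_y\mrx$, with each $\psi_n$ a direct sum of left multiplications $(\lmult{xy})|_{e_y\mrx}$, and with $\ker\psi_n = \Z(\kappa\beta)^{n+1}(X^2_{f+})$. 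I would then set $P_n := Q_{n-1}$ for $n \geq 1$, let $\varphi_1$ be $\psi_0$ followed by the inclusion $\Z X^2_{f+} = \ker\varphi_0 \hookrightarrow \mrx$, and let $\varphi_n := \psi_{n-1}$ for $n \geq 2$. Reindexing the displayed formulas for $Q_\ast$ by one reproduces items (1)--(3) of the proposition for $n \geq 1$ (the ``$\beta(S)$'' appearing in item (2) being $\beta(X^2_{f+})$).

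It then remains to check that $\cdots \too P_1 \too P_0 \xrightarrow{\varphi_0} \Z^N \too 0$ is exact and minimal. Exactness at $\Z^N$ is the surjectivity of $\varphi_0$; exactness at $P_0 = \mrx$ holds because the image of $P_1 \too \mrx$ equals $\im\psi_0 = \Z X^2_{f+} = \ker\varphi_0$; and exactness at $P_n$ for $n \geq 1$ is the exactness of $Q_\ast$ at $Q_{n-1}$, which is unaffected by the injective inclusion $\Z X^2_{f+} \hookrightarrow \mrx$. For minimality, $\varphi_0$ is a projective cover by the first paragraph, $\varphi_1 \colon P_1 \too \ker\varphi_0 = \Z X^2_{f+}$ is the projective cover $\psi_0$, and for $n \geq 2$ the injectivity of the inclusions forces $\ker\varphi_{n-1}$ to coincide with the kernel of the relevant $Q$-boundary, so that the induced surjection $P_n = Q_{n-1} \too \ker\varphi_{n-1}$ is exactly the projective cover supplied by the minimality of $Q_\ast$. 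Hence every $\varphi_n$ is a projective cover onto $\ker\varphi_{n-1}$, and $P_\ast \too \Z^N$ is a minimal projective resolution.

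The real difficulty is not in this assembly but upstream, in \cref{mpr1}, which I take as given: it is there that geodeticity is indispensable, ensuring through \cref{geodetic,kapiota} that $\im\lmult{xy} \cap \im\lmult{zw} = 0$ for distinct $e_{xy}, e_{zw} \in \beta(S)$, so that the kernel of each successive projective cover is again of the form $\Z S'$ and the construction can be iterated at all. Granting that, the only point needing care here is the degree shift: $P_0 = \mrx$ is one step ``larger'' than its radical $rX$, which is why the minimal resolution of $\Z X^2_{f+}$ gets spliced on only from $P_1$ onward.
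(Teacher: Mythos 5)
Your proof is correct and takes essentially the same approach as the paper: observe that the quotient map $\mrx \to \mrx/rX$ is a projective cover with kernel $rX = \Z X^2_{f+}$, then splice onto it the minimal projective resolution of $\Z X^2_{f+}$ supplied by the ``Consequently'' part of \cref{mpr1} with $S = X^2_{f+}$. The paper compresses this into two sentences; you spell out the projective-cover verification in degree $0$ (via \cref{rx} and \cref{nakayama}) and the degree shift in the splicing, both of which the paper leaves implicit.
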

\begin{proof}
Since the quotient map $\mr{X} \too \mr{X}/ rX$ is a projective cover and its kernel is $rX$, we can apply Proposition \ref{mpr1} to $S = X^2_{f+}$, which implies the statement.
\end{proof}

\begin{rem}
    Note that only the existence of a minimal projective resolution over $\mr[k]{X}$ is guaranteed for all field $k$ (\cite{HGK} Theorem 10.4.8). Here, we obtained an explicit construction of a minimal resolution over $\mr{X}$.
\end{rem}




\section{Magnitude homology of geodetic quasi metric spaces}
\label{homology_geodetic}
In this section, we describe the projective resolution obtained in \cref{mprgeodsec} in a geometric manner, namely by using the notion \(\Theta_n(S)\). Consequently, together with a filtered colimit preservation property of magnitude homology, we obtain a complete description of magnitude homology of geodetic quasi metric spaces. In the following, $(X, d)$ is a quasi metric space (not necessarily finite or geodetic).
\subsection{Definition of $\Theta_n(S)$}

\begin{df}\label{dftheta}
    Let $S \subset X^2_f$. We define a set $\Theta_n(S)$ for $n\geq 1$ that consists of tuples $(x_0, \dots, x_{n}) \in X^{n+1}$ satisfying the following for all $1\leq i \leq n-1$.
    \begin{enumerate}
    \item $(x_0, x_1) \in S$, $(x_i, x_{i+1}) \in X^2_f$.
    \item $\lnot(x_{i-1}\leq x_{i} \leq x_{i+1})$.
    \item $x_{i}\leq a \leq  x_{i+1}$ and $a \neq x_{i+1}$ imply that $x_{i-1}\leq x_{i} \leq a$.
    \end{enumerate}
\end{df}

\begin{lem}\label{theta1}
$\Theta_{n+1}(S) = \bigsqcup_{(x_0, \dots, x_n) \in \Theta_n(S)}\beta\kappa(e_{x_{n-1}x_{n}})$.
\end{lem}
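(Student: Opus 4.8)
The plan is to unwind the definitions of $\kappa$, $\beta$ and $\Theta_n$ and to match both sides as sets of tuples. First I would compute $\beta\kappa(e_{xy})$ for a single generator $e_{xy}\in X^2_f$. Since every element of $(\{y\}\times X)\cap X^2_f$ has the form $e_{yw}$, the defining formula for $\kappa$ gives $\kappa(e_{xy})=\{\,e_{yw}\in X^2_f\mid\neg(x\leq y\leq w)\,\}$. Applying $\beta$ removes those $e_{yw}$ lying in some $\iota(\kappa(e_{xy}),e_{uw})$ with $e_{uw}\in X^2_{f+}$; expanding the definition of $\iota$, this means that there exists $u\neq w$ with $y\leq u\leq w$ and $e_{yu}\in\kappa(e_{xy})$. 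Since $(y,u)\in X^2_f$ automatically whenever $y\leq u\leq w$ and $(y,w)\in X^2_f$ (by the triangle inequality), the failure $e_{yu}\notin\kappa(e_{xy})$ amounts to $x\leq y\leq u$. Hence $e_{yw}\in\beta\kappa(e_{xy})$ if and only if $(y,w)\in X^2_f$, $\neg(x\leq y\leq w)$, and every $a$ with $y\leq a\leq w$ and $a\neq w$ satisfies $x\leq y\leq a$.

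Next I would describe both sides as sets of tuples. An element of the external disjoint union $\bigsqcup_{(x_0,\dots,x_n)\in\Theta_n(S)}\beta\kappa(e_{x_{n-1}x_n})$ is a pair consisting of a tuple $(x_0,\dots,x_n)\in\Theta_n(S)$ together with an element $e_{x_nx_{n+1}}\in\beta\kappa(e_{x_{n-1}x_n})$, and I identify this pair with the tuple $(x_0,\dots,x_n,x_{n+1})$; since the tuple recovers the pair, this identification is a bijection onto its image. So it remains to prove the equivalence: $(x_0,\dots,x_{n+1})\in\Theta_{n+1}(S)$ if and only if $(x_0,\dots,x_n)\in\Theta_n(S)$ and $e_{x_nx_{n+1}}\in\beta\kappa(e_{x_{n-1}x_n})$. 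For this I would simply compare the defining conditions of \cref{dftheta} index by index. The conditions (1)--(3) for $\Theta_{n+1}(S)$ restricted to indices $1\leq i\leq n-1$ are verbatim the conditions defining $(x_0,\dots,x_n)\in\Theta_n(S)$ (in particular they already contain $(x_{n-1},x_n)\in X^2_f$). The remaining conditions, at the index $i=n$, say exactly that $(x_n,x_{n+1})\in X^2_f$, that $\neg(x_{n-1}\leq x_n\leq x_{n+1})$, and that every $a$ with $x_n\leq a\leq x_{n+1}$ and $a\neq x_{n+1}$ satisfies $x_{n-1}\leq x_n\leq a$; by the formula for $\beta\kappa$ obtained above, applied with $x=x_{n-1}$, $y=x_n$, $w=x_{n+1}$, this trio of conditions is precisely $e_{x_nx_{n+1}}\in\beta\kappa(e_{x_{n-1}x_n})$. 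Both implications of the equivalence follow at once; the case $n=1$ is included since then the index range $1\leq i\leq n-1$ is empty and $\Theta_1(S)$ consists simply of the pairs $(x_0,x_1)\in S$.

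I do not expect a genuine obstacle: the whole argument is bookkeeping with quantified conditions. The one point that deserves a moment of care --- and it is the main (minor) subtlety --- is the handling of the finiteness conditions ``$\in X^2_f$'': one must note that in the step computing $\beta\kappa(e_{xy})$ the auxiliary pair $(y,u)$ automatically lies in $X^2_f$ once $y\leq u\leq w$ and $(y,w)\in X^2_f$, so that ``$e_{yu}\notin\kappa(e_{xy})$'' unambiguously means ``$x\leq y\leq u$'' and no hidden finiteness constraint is smuggled in. With that observed, everything else is routine.
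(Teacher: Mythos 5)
Your proof is correct and takes essentially the same approach as the paper's: unwind the definitions of $\kappa$, $\beta$, and $\Theta$ and match the defining conditions index by index, observing that the conditions at $1\le i\le n-1$ give $\Theta_n(S)$ and the condition at $i=n$ gives membership in $\beta\kappa(e_{x_{n-1}x_n})$. The one stylistic difference is that you first isolate an explicit characterization of $\beta\kappa(e_{xy})$ as a standalone formula, which makes both inclusions follow symmetrically; the paper instead argues the forward inclusion by contradiction via condition (3) and leaves the converse to the reader. Your version is slightly more complete but not substantially different.
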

\begin{proof}
     Note that $\Theta_{n+1}(S)\subset \Theta_n(S)\times X \cong \bigsqcup_{(x_0, \dots, x_n) \in \Theta_n(S)}\{x_n\}\times X$.  Hence we can regard $\Theta_{n+1}(S)$ as a subset of $\bigsqcup_{(x_0, \dots, x_n) \in \Theta_n(S)}\{x_n\}\times X$. By the condition $(2)$ of \cref{dftheta}, we have $\Theta_{n+1}(S) \subset \bigsqcup_{(x_0, \dots, x_n) \in \Theta_n(S)}\kappa(e_{x_{n-1}x_n})$. Suppose $(x_0, \dots, x_{n+1}) \in \Theta_{n+1}(S)$. If $(x_n, x_{n+1}) \not\in \beta\kappa(e_{x_{n-1}x_{n}})$, then there exist $a\neq x_{n+1} \in X$ such that $(x_n, a) \in \kappa(e_{x_{n-1}x_{n}})$ and $x_n \leq a \leq x_{n+1}$. Then the condition (3) of \cref{dftheta} implies that $x_{n-1}\leq x_{n} \leq a$, which implies a contradiction $(x_n, a) \not\in \kappa(e_{x_{n-1}x_{n}})$. Hence we obtain $\Theta_{n+1}(S) \subset \bigsqcup_{(x_0, \dots, x_n) \in \Theta_n(S)}\beta\kappa(e_{x_{n-1}x_n})$. Conversely, we can verify that any element $(x_0, \dots, x_{n+1}) \in \bigsqcup_{(x_0, \dots, x_n) \in \Theta_n(S)}\beta\kappa(e_{x_{n-1}x_{n}})$ belongs to $\Theta_{n+1}(S)$.
\end{proof}

Note that, by \cref{betakappan}, any element of $(\beta\kappa)^nS$ can be expressed as $(x_0, \dots, x_{n+1})$ satisfying that $(x_0, x_1) \in S$ and $(x_{i}, x_{i+1}) \in \beta\kappa(e_{x_{i-1}x_i})$ for $1\leq i \leq n$ for $n \geq 1$.

\begin{lem}\label{theta3}
    By the above expression, we can identify $(\beta\kappa)^nS$ and $\Theta_{n+1}(S)$ for $n \geq 1$.
\end{lem}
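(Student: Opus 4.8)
The plan is to prove the identification by induction on $n$, transporting the two nested-disjoint-union descriptions available to us — \cref{betakappan} for $(\beta\kappa)^nS$ and \cref{theta1} for $\Theta_{n+1}(S)$ — onto one another layer by layer. Throughout, the identification of an element of $(\beta\kappa)^nS$ with a tuple $(x_0,\dots,x_{n+1})\in X^{n+2}$ is the one recording its whole ``history'' inside the nested disjoint union of \cref{betakappan}: the outermost index $(x_0,x_1)\in S$, then $(x_i,x_{i+1})\in\beta\kappa(e_{x_{i-1}x_i})$ for $1\le i\le n-1$, and finally the element itself lying in $\beta\kappa(e_{x_{n-1}x_n})$, which is a pair $(x_n,x_{n+1})$. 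Since distinct histories land in distinct components of the external disjoint union, this is a bijection onto its image, and the claim is precisely that this image is $\Theta_{n+1}(S)$.

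First I would settle the base case $n=1$. Unwinding \cref{dftheta} at $n=1$, conditions (2) and (3) are imposed over the empty range $1\le i\le 0$ and the second half of condition (1) is vacuous as well, so $\Theta_1(S)=\{(x_0,x_1)\mid(x_0,x_1)\in S\}=S$. Hence the $n=1$ instance of \cref{theta1} reads
\[
\Theta_2(S)=\bigsqcup_{(x_0,x_1)\in S}\beta\kappa(e_{x_0x_1})=\beta\kappa(S)=(\beta\kappa)^1(S),
\]
which is exactly the desired identification for $n=1$.

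For the inductive step, assume $(\beta\kappa)^nS=\Theta_{n+1}(S)$ under the history identification. The key observation is that the formula of \cref{betakappan} is compatible with peeling off its last layer: the $(n+1)$-fold nested disjoint union describing $(\beta\kappa)^{n+1}(S)$ has its first $n$ layers of indexing equal, again by \cref{betakappan}, to $(\beta\kappa)^n(S)$ itself, so that
\[
(\beta\kappa)^{n+1}(S)=\bigsqcup_{(x_0,\dots,x_{n+1})\in(\beta\kappa)^nS}\beta\kappa(e_{x_nx_{n+1}}).
\]
Using the inductive hypothesis to rewrite the index set as $\Theta_{n+1}(S)$, and then \cref{theta1} with $n$ replaced by $n+1$, this becomes $\bigsqcup_{(x_0,\dots,x_{n+1})\in\Theta_{n+1}(S)}\beta\kappa(e_{x_nx_{n+1}})=\Theta_{n+2}(S)$, which completes the induction. (Equivalently, one may simply unfold the recursion in \cref{theta1} all the way down to $\Theta_1(S)=S$ and observe that the resulting nested disjoint union is verbatim the one produced by \cref{betakappan}.)

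I do not expect a genuine obstacle: the content here is entirely bookkeeping of indices. The only points requiring care are to state precisely what ``identify'' means — namely the history-recording bijection above — and to verify the peeling compatibility of \cref{betakappan}, i.e.\ that stripping the final $\beta\kappa(\cdot)$ factor from the $(n+1)$-fold nested union returns exactly the $n$-fold nested union. Once these are in place, everything follows by matching the two descriptions term by term.
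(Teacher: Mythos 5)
Your proposal is correct and follows essentially the same route as the paper: the base case via $\Theta_1(S)=S$ together with \cref{theta1}, and the inductive step combining \cref{theta1}, the inductive hypothesis, and the nested-union description of \cref{betakappan}. The only difference is cosmetic — you run the chain of identifications from $(\beta\kappa)^{n+1}S$ to $\Theta_{n+2}(S)$ while the paper goes in the opposite direction, and you spell out the ``history'' bijection more explicitly, which the paper leaves implicit.
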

\begin{proof}
    We prove it by induction. For $n = 1$, it follows from $\Theta_1(S) = S$ and \cref{theta1}. Assume that it is true up to $n$. Then we have
    \begin{align*}
        \Theta_{n+2}(S) &= \bigsqcup_{(x_0, \dots, x_{n+1}) \in \Theta_{n+1}(S)}\beta\kappa(e_{x_{n}x_{n+1}}) \\
        &= \bigsqcup_{(x_0, \dots, x_{n+1}) \in (\beta\kappa)^nS}\beta\kappa(e_{x_{n}x_{n+1}}) \\
        &= \bigsqcup_{(x_0, x_1)\in S}\dots \bigsqcup_{(x_{n-1}, x_n) \in \beta\kappa(e_{x_{n-2}x_{n-1}})}\bigsqcup_{(x_n, x_{n+1})\in \beta\kappa(e_{x_{n-1}x_n})} \beta\kappa(e_{x_nx_{n+1}})\\
        &= (\beta\kappa)^{n+1}S.
    \end{align*}
\end{proof}



Now we show that the class of tuples $\Theta_n(S)$ is a generalization of Kaneta and Yoshinaga's {\it thin frames} (\cite{KY}), which they mainly consider in the ``4-cut-free situation''.

\begin{df}[\cite{KY} Definition 5.1]\label{df4cut}
    A tuple $(x_0, \dots, x_n) \in X^{n+1}_f$ is a {\it thin frame} if it satisfies the following.
    \begin{enumerate}
    \item $\lnot(x_{i-1}\leq x_{i} \leq x_{i+1})$ for all $1\leq i \leq n-1$.
    \item $x_{i}\leq a \leq  x_{i+1}$ implies that $a = x_{i}$ or $a = x_{i+1}$ for all $0\leq i \leq n-1$.
    \end{enumerate}
\end{df}

\begin{df}
    A tuple $(x_0, x_1, x_2, x_3) \in X^{4}_f$ is called a {\it 4-cut} if it satisfies the following.
    \begin{enumerate}
        \item $x_0\leq x_1\leq x_2, x_1 \leq x_2 \leq x_3$ and $x_1 \neq x_2$,
        \item $\lnot(x_0\leq x_1 \leq x_3)$ or equivalently $\lnot(x_0\leq x_2 \leq x_3)$.
    \end{enumerate}
    We also define $m_X := \inf \{\sum_{i=0}^{3}d(x_i, x_{i+1}) \mid (x_0, x_1, x_2, x_3) \text{ is a 4-cut} \}\in [0, \infty]$.
\end{df}

\begin{prop}\label{thintheta}
If $\sum_{i=0}^{n-1}d(x_i, x_{i+1})<m_X$, then a tuple $(x_0, \dots, x_n) \in X^{n+1}_f$ is a thin frame if and only if it is in $\Theta_n(\beta(X^2_{f+}))$.

\end{prop}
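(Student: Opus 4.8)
The plan is to compare the two defining conditions directly, using the hypothesis $\sum_{i=0}^{n-1}d(x_i, x_{i+1}) < m_X$ to bridge the gap between the ``local'' condition (3) of \cref{dftheta} and the stronger condition (2) of \cref{df4cut}. First I would unwind the membership $(x_0, \dots, x_n) \in \Theta_n(\beta(X^2_{f+}))$ explicitly. Since $\Theta_n$ is built iteratively via \cref{theta1,theta3}, membership amounts to: $(x_0, x_1) \in \beta(X^2_{f+})$, and for each $1 \le i \le n-1$, $(x_i, x_{i+1}) \in \beta\kappa(e_{x_{i-1}x_i})$; equivalently, conditions (1)--(3) of \cref{dftheta} with $S = \beta(X^2_{f+})$. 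The condition $(x_0, x_1) \in \beta(X^2_{f+})$ unpacks to: $x_0 \neq x_1$ and there is no $a \notin \{x_0, x_1\}$ with $x_0 \leq a \leq x_1$ — which is exactly condition (2) of \cref{df4cut} for the index $i = 0$.

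The forward direction ($\Theta$-membership $\Rightarrow$ thin frame) is the one where $m_X$ enters. Condition (1) of the thin frame is literally condition (2) of \cref{dftheta}, so nothing to do there. For condition (2) of the thin frame with $1 \le i \le n-1$: suppose $x_i \leq a \leq x_{i+1}$. Condition (3) of \cref{dftheta} gives $x_{i-1} \leq x_i \leq a$. If moreover $a \neq x_i$ and $a \neq x_{i+1}$, then combining $x_{i-1} \leq x_i \leq a$, $a \leq x_{i+1}$ (so $x_{i-1} \le x_i \le a \le x_{i+1}$ yields $x_{i-1} \le x_i \le x_{i+1}$?) — here I must be careful: I want to exhibit a 4-cut $(x_{i-1}, x_i, a, x_{i+1})$ or similar. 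Indeed $x_{i-1} \le x_i \le a$ and $x_i \le a \le x_{i+1}$ with $x_i \neq a$ (since $a \neq x_i$), and $\lnot(x_{i-1} \le x_i \le x_{i+1})$ by condition (2) of \cref{dftheta}; the final requirement $\lnot(x_{i-1} \le x_i \le x_{i+1})$ is equivalently $\lnot(x_{i-1}\le a \le x_{i+1})$ once we know $x_{i-1}\le x_i \le a$, matching the 4-cut definition. Thus $(x_{i-1}, x_i, a, x_{i+1})$ is a 4-cut, and since $a \in I_X(x_i, x_{i+1})$ one checks $d(x_{i-1},x_i) + d(x_i,a) + d(a,x_{i+1}) \le \sum_{j=0}^{n-1} d(x_j,x_{j+1})$ (because $d(x_i,a)+d(a,x_{i+1}) = d(x_i,x_{i+1})$ and the rest of the path only adds length), contradicting $\sum_{j} d(x_j,x_{j+1}) < m_X$. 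Hence $a = x_i$ or $a = x_{i+1}$, proving thin-frameness.

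For the converse ($\Rightarrow$), suppose $(x_0,\dots,x_n)$ is a thin frame in $X^{n+1}_f$. Condition (1) of \cref{df4cut} gives condition (2) of \cref{dftheta}. For condition (3) of \cref{dftheta}: if $x_i \le a \le x_{i+1}$ and $a \ne x_{i+1}$, then by condition (2) of the thin frame $a = x_i$, so $x_{i-1} \le x_i \le a$ holds trivially. Finally $(x_0, x_1) \in X^2_{f+}$ since condition (2) of the thin frame at $i = 0$ forbids any $a$ strictly between $x_0$ and $x_1$, giving $x_0 \ne x_1$ (otherwise, if $x_0 = x_1$, it is not automatically excluded — but then condition (1) at $i=1$ reads $\lnot(x_0 \le x_1 \le x_2)$, i.e.\ $\lnot(x_1 \le x_1 \le x_2)$, which is false; so $n \ge 2$ forces $x_0 \ne x_1$, and for $n = 1$ one treats it separately or notes thin frames are understood to have distinct consecutive entries) and that $(x_0,x_1) \in \beta(X^2_{f+})$. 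Then by \cref{theta3} the tuple lies in $\Theta_n(\beta(X^2_{f+}))$.

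The main obstacle is the bookkeeping in the forward direction: one must correctly identify which 4-tuple is the 4-cut and verify its total length is bounded by $\sum_j d(x_j, x_{j+1})$, using the additivity $d(x_i,a) + d(a, x_{i+1}) = d(x_i, x_{i+1})$ together with the triangle inequality to discard the ``outer'' segments $d(x_0,x_1),\dots$ — this is routine but requires care that no segment is double-counted and that the strict inequality $m_X > \sum_j$ is genuinely contradicted. Everything else is a direct translation between the two lists of conditions.
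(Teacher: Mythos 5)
Your proof is correct and takes essentially the same approach as the paper: the hard direction ($\Theta$-membership $\Rightarrow$ thin frame) uses condition (3) of \cref{dftheta} to exhibit $(x_{i-1}, x_i, a, x_{i+1})$ as a candidate 4-cut of total length at most $\sum_j d(x_j, x_{j+1}) < m_X$, giving a contradiction, while the converse is a direct translation of conditions. The only difference is that you spell out the converse (which the paper dismisses as obvious) and make the length bookkeeping explicit, but the key mechanism is identical.
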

\begin{proof}
    It is obvious that any thin frame $(x_0, \dots, x_n) \in X^{n+1}$ is in $\Theta_n(\beta(X^2_{f+}))$. We show the other inclusion. Let $(x_0, \dots, x_n) \in\Theta_n(\beta(X^2_{f+}))$. Then we have $\lnot(x_{i-1}\leq x_{i} \leq x_{i+1})$ for all $1\leq i \leq n-1$. Suppose that we have $x_{i}\leq a \leq  x_{i+1}$ for some $a\neq x_i, x_{i+1} \in X$. When $1 \leq i \leq n-1$, we obtain $x_{i-1} \leq x_i \leq a$. Since $(x_{i-1}, x_i, a,  x_{i+1})$ is not a 4-cut by the assumption, we obtain that $x_{i-1}\leq x_{i} \leq x_{i+1}$ that is a contradiction. Hence we conclude that $a= x_i$ or $a= x_{i+1}$. When $i= 0$, namely $x_{0}\leq a \leq  x_{1}$, it should be $a= x_0$ or $a= x_{1}$ since we have $(x_0, x_1) \in \beta(X^2_{f+})$.
\end{proof}

\subsection{Description of magnitude homology via $\Theta_n$}
It is shown in Proposition 1.14 of \cite{I} and Appendix of \cite{HR} that the magnitude homology of digraphs preserves filtered colimits. Here, we show that the magnitude homology also preserves filtered colimits in $\Met^{\sf inc}$; the category of quasi metric spaces and isometric embeddings. We consider the magnitude homology $\MH^\ell_n$ as a functor $\Met^{\sf inc} \too \Ab$. Then we have the following.

\begin{prop}\label{prescol}
    The functor $\MH^\ell_n$ preserves filtered colimits.
\end{prop}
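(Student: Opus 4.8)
The plan is to reduce the statement to a combinatorial fact about the magnitude chain complex together with a standard categorical observation, namely that the unnormalized magnitude chain complex $\MC^\ell_\ast$ is built degreewise from finite colimits of representables. First I would recall that for each bidegree $(\ell, n)$ the abelian group $\MC^\ell_n(X)$ is the free abelian group on the set $\{(x_0,\dots,x_n)\in X^{n+1} : \sum_i d(x_i,x_{i+1}) = \ell\}$, and that every such tuple uses only the finitely many points $x_0,\dots,x_n$ of $X$. Thus if $X = \operatorname{colim}_{j} X_j$ is a filtered colimit in $\Met^{\sf inc}$ (so each $X_j \hookrightarrow X$ is an isometric embedding and every finite subset of $X$ lands in some $X_j$), then any tuple of length $n+1$ in $X$ already lies in the image of some $\MC^\ell_n(X_j)$, and two tuples that agree in $X$ agree in some common $X_{j'}$. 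Moreover the differential $\del^\ell_n$ is given by the same deletion formula $\sum_i (-1)^i \del_{n,i}$ in every $X_j$ and is compatible with the inclusions, since the conditions $x_{i-1}\leq x_i \leq x_{i+1}$ are determined by the distances among $x_{i-1},x_i,x_{i+1}$, which are preserved by isometric embeddings. Hence $\MC^\ell_\ast(X) = \operatorname{colim}_j \MC^\ell_\ast(X_j)$ as chain complexes of abelian groups, the colimit being filtered.

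Second, I would invoke exactness of filtered colimits in $\Ab$: since filtered colimits are exact, homology commutes with them, so
\[
\MH^\ell_n(X) = H_n\bigl(\MC^\ell_\ast(X)\bigr) = H_n\bigl(\operatorname{colim}_j \MC^\ell_\ast(X_j)\bigr) \cong \operatorname{colim}_j H_n\bigl(\MC^\ell_\ast(X_j)\bigr) = \operatorname{colim}_j \MH^\ell_n(X_j).
\]
This is exactly the statement that $\MH^\ell_n \colon \Met^{\sf inc} \too \Ab$ preserves filtered colimits, once one checks that the isomorphism above is induced by the canonical comparison maps $\operatorname{colim}_j \MH^\ell_n(X_j) \to \MH^\ell_n(X)$, which is immediate from naturality of the construction. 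The only genuinely ``moving'' part is verifying that the forgetful-type functor $X \mapsto \MC^\ell_\ast(X)$ (valued in chain complexes) preserves filtered colimits; this is where the isometric-embedding hypothesis is used in an essential way, because distances — hence which tuples have length exactly $\ell$ and which satisfy the betweenness relations appearing in the differential — must be detected at a finite stage.

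The main obstacle I anticipate is purely bookkeeping: being careful that the colimit in $\Met^{\sf inc}$ exists and is computed on underlying sets with the evident metric, and that ``filtered'' genuinely lets one (a) find a stage containing any given finite tuple and (b) equalize any two arrows out of a finite set. For (a) and (b) one uses that a filtered colimit of sets is computed as the usual filtered colimit, so each element of $X = \operatorname{colim} X_j$ is represented at some stage and any two representatives become equal at a later stage; applying this finitely many times (for the $n+1$ coordinates of a tuple, and for finitely many relations) stays within the filtered system. Once that is in place, the argument is the standard ``homology commutes with filtered colimits'' mantra, so I would phrase the proof as: (i) $\MC^\ell_\ast(-)$ sends filtered colimits in $\Met^{\sf inc}$ to filtered colimits of chain complexes, by the finiteness-of-tuples observation; (ii) filtered colimits in $\Ab$ are exact; (iii) conclude. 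Note this also explains the finiteness caveat elsewhere in the paper: $\mr{X}$ and $\Tor^{\mr{X}}$ require $X$ finite, but $\MH^\ell_n$ itself is insensitive to that, which is precisely what this proposition is used for in \cref{thm1}.
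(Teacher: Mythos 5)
Your proposal is correct and is essentially the same argument as the paper's: both show that $\MC^\ell_\ast(-)$ commutes with filtered colimits in $\Met^{\sf inc}$ by observing that tuples involve only finitely many points and that lengths and betweenness conditions are detected at a finite stage (the paper phrases this via the forgetful functor to $\Set$ and commutation of filtered colimits with finite limits, plus cocontinuity of the free abelian group functor), and then both invoke that homology commutes with filtered colimits of chain complexes of abelian groups.
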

\begin{proof}
 Let $U\colon \Met^{\sf inc} \too \Set$ be the forgetful functor.  For a filtered colimit ${\sf colim}_\alpha X_\alpha$ in $\Met^{\sf inc}$, we have a natural map ${\sf colim}_\alpha U(X_\alpha) \too U({\sf colim}_\alpha X_\alpha)$. We can verify that it is an isomorphism. Since filtered colimits commute with finite limits, we obtain an isomorphism ${\sf colim}_\alpha (U(X_\alpha))^{n+1} \too (U({\sf colim}_\alpha X_\alpha))^{n+1}$ for all $n\geq0$. We can define the length  $\sum_{i=0}^{n-1}d(x_i, x_{i+1})$ of a tuple $(x_0, \dots, x_n) \in {\sf colim}_\alpha (U(X_\alpha))^{n+1}$ by defining $d(x_i, x_{i+1}) = d(x_i^\alpha, x_{i+1}^\alpha) $ for some $\alpha$ such that $x_i^\alpha, x_{i+1}^\alpha \in X_\alpha$. It is immediately verified that the above isomorphism preserves the length of tuples which is also defined on the right hand side. Together with the fact that the free abelian group functor preserves colimits, we obtain an isomorphism $\MC^\ell_\ast({\sf colim}_\alpha X_\alpha) \cong {\sf colim}_\alpha\MC^\ell_\ast(X_\alpha)$ for all $\ell \geq 0$. Now the statement follows from the colimit preservation of the homology of chain complexes.
\end{proof}
\begin{rem}
    Note that $U$ is represented by the one point metric space $\ast$, namely $U(-) \cong \Met^{\sf inc}(\ast, -)$. The key in the proof of \cref{prescol} is that the one point metric space is a compact object in $\Met^{\sf inc}$. As mentioned in \cite{HR}, this is not true when we consider the category of quasi metric spaces and Lipschitz maps.
\end{rem}
Let $X$ be a finite geodetic quasi metric space with $|X| = N$. Recall from Section \ref{mhdf} that we have a projective resolution
\begin{align}\label{mgresol}
    \dots \xlongrightarrow{\del_{n+1}} \Z X^{n+2}_f \xlongrightarrow{\del_{n}}  \Z X^{n+1}_f  \xlongrightarrow{\del_{n-1}} \dots \xlongrightarrow{\del_{1}} \Z X^2_f \xlongrightarrow{\del_{0}} \Z^N \too 0
\end{align}
of the trivial $\mr{X}$-module $\Z^N \cong \mr{X}/rX$. Now we consider a set
\[
\Theta'_n = \{(x_0, \dots, x_{n+1})\mid (x_0, \dots, x_{n}) \in \Theta_n(\beta(X^2_{f+})) \}\subset X^{n+2}_f,
\]
for $n\geq 1$ and $\Theta'_0 = X^2_f$. Let $\Z\Theta'_n$ be a $\mr{X}$-submodule of $\Z X^{n+2}_f$ for $n\geq 0$. It is immediately verified that $\Z\Theta'_\ast \too \Z^{N} \too 0$ is a subchain complex of (\ref{mgresol}).

\begin{prop}\label{thetap}
    The chain complex $\Z\Theta'_\ast \too \Z^{N} \too 0$ is isomorphic to the chain complex $P_\ast \too \Z^N \too 0$ defined in Proposition \ref{mprgeod}, hence a minimal projective resolution of $\Z^N$.
\end{prop}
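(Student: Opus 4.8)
The goal is to identify the subcomplex $\Z\Theta'_\ast \too \Z^N$ of the bar-type resolution \eqref{mgresol} with the minimal projective resolution $P_\ast \too \Z^N$ from \cref{mprgeod}. The strategy is to match the two complexes degreewise, using \cref{theta3} to recognize $\Theta'_n$ as $(\beta\kappa)^{n-1}\beta(X^2_{f+})$ (suitably padded), and then check that the differentials agree.

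First I would treat the module identification. For $n\geq 1$, an element of $\Theta'_n$ is a tuple $(x_0,\dots,x_{n+1})$ whose truncation $(x_0,\dots,x_n)$ lies in $\Theta_n(\beta(X^2_{f+}))$; by \cref{theta3} this truncation is precisely an element of $(\beta\kappa)^{n-1}\beta(X^2_{f+})$, written in the expanded form of \cref{betakappan} as $(x_0,\dots,x_n)$ with $(x_0,x_1)\in\beta(X^2_{f+})$ and $(x_i,x_{i+1})\in\beta\kappa(e_{x_{i-1}x_i})$. The free $\mr X$-module on the symbol $(x_0,\dots,x_n)$ inside $\Z X^{n+2}_f$ is, by the description of the $\mr X$-action recalled in \cref{mhdf} (namely $(x_0,\dots,x_{n+1})e_{zw}=(x_0,\dots,x_n,w)$ if $x_n\le x_{n+1}=z\le w$ and $0$ otherwise, together with the fact that the last slot is free to range), exactly $e_{x_{n+1}}\mr X$ — here one reads off that the generator $(x_0,\dots,x_n,x_{n+1})$ with $x_{n+1}=x_n$ generates a copy of $e_{x_n}\mr X$. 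Hence $\Z\Theta'_n \cong \bigoplus_{(x_0,\dots,x_n)\in(\beta\kappa)^{n-1}\beta(X^2_{f+})} e_{x_n}\mr X = P_n$, which is \cref{mprgeod}(1) after relabeling $(x,y)=(x_{n-1},x_n)$. For $n=0$ both are $\Z X^2_f=\mr X$, and the augmentation to $\Z^N$ matches the quotient $\mr X\too \mr X/rX$.

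Next I would check the differentials. In \eqref{mgresol}, $\del_n=\sum_{i=0}^n(-1)^i\del_{n,i}$ where $\del_{n,i}$ deletes $x_i$ when $x_{i-1}\le x_i\le x_{i+1}$ and kills the generator otherwise. Restricted to $\Z\Theta'_n$: for $1\le i\le n-1$, condition (2) of \cref{dftheta} gives $\lnot(x_{i-1}\le x_i\le x_{i+1})$, so $\del_{n,i}=0$; the $i=0$ face uses the convention $x_{-1}=x_1$, and $x_{-1}\le x_0\le x_1$ forces $x_0=x_1$, contradicting $(x_0,x_1)\in X^2_{f+}$ (for $n\geq 2$), so $\del_{n,0}=0$; thus only $\del_{n,n}$ survives, $\del_n=(-1)^n\del_{n,n}$, and $\del_{n,n}(x_0,\dots,x_{n+1})$ equals $(x_0,\dots,x_{n-1},x_{n+1})$ when $x_{n-1}\le x_n\le x_{n+1}$ and $0$ otherwise. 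I would then recognize this as exactly $\pm\lmult{x_{n-1}x_n}$ on the summand $e_{x_n}\mr X$ landing in the summand of $P_{n-1}$ indexed by $(x_0,\dots,x_{n-1})$: indeed $e_{x_{n-1}x_n}\cdot e_{x_nx_{n+1}} = e_{x_{n-1}x_{n+1}}$ precisely when $x_{n-1}\le x_n\le x_{n+1}$, matching \cref{mprgeod}(2) up to the sign $(-1)^n$, which is an isomorphism of complexes. (One should also double-check the $n=1$ case, where the $i=0$ face with $x_{-1}=x_1$ and $(x_0,x_1)\in\beta(X^2_{f+})$ still vanishes because $x_0\le x_1\le x_1$ would force $x_0=x_1$.)

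The main obstacle I anticipate is bookkeeping the two conventions simultaneously: the "external disjoint union" indexing of $(\beta\kappa)^{n-1}\beta(X^2_{f+})$ in \cref{mprgeod} versus the concrete tuples in $X^{n+2}_f$, and making sure the $\mr X$-module generator attached to a tuple is genuinely $e_{x_n}\mr X$ and not something smaller — this is where geodeticity is implicitly doing work (via \cref{mpr1}, the images $\im\lmult{x_{n-1}x_n}$ for distinct generators intersect trivially, so the direct sum decomposition of $P_{n-1}$ is honest and the differential is block-diagonal in the claimed way). Once the degree-by-degree isomorphism of modules and the agreement of differentials (including signs) are in hand, the statement that $\Z\Theta'_\ast\too\Z^N$ is a minimal projective resolution follows immediately from \cref{mprgeod}.
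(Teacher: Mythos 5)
Your proof is correct and follows essentially the same route as the paper: reindex via \cref{theta3} so that the component modules of $P_n$ are labeled by $\Theta_n(\beta(X^2_{f+}))$, identify $(x_0,\dots,x_{n+1})\in\Theta'_n$ with $e_{x_nx_{n+1}}$ in the $(x_0,\dots,x_n)$-summand $e_{x_n}\mr X$, and check the differentials. You are more explicit than the paper in verifying that $\del_{n,i}$ vanishes on $\Z\Theta'_n$ for $0\le i\le n-1$ (leaving only $(-1)^n\del_{n,n}$) and in flagging the resulting sign $(-1)^n$ relating the restricted bar differential to $\varphi_n=\bigoplus\lmult{x_{n-1}x_n}$ --- the paper's statement that ``the boundary operators coincide'' is really ``coincide up to sign,'' which still gives a chain isomorphism after rescaling the identification by signs in each degree.
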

\begin{proof}
Recall that $P_0 = \mr{X}$ and $P_n = \bigoplus_{(x_0, \dots, x_{n}) \in \Theta_n(\beta(X^2_{f+}))}e_{x_n}\mr{X}$ for $n\geq 1$ by Proposition \ref{mprgeod} and Lemma \ref{theta3}. Hence we can identify $\Z\Theta'_n$ with $P_n$ for all $n\geq 1$ by sending $(x_0, \dots, x_{n+1}) \in \Theta'_n$ to $e_{x_nx_{n+1}} \in e_{x_n}\mr{X}$ in the $(x_0, \dots, x_n)$-component of $\bigoplus_{(x_0, \dots, x_{n}) \in \Theta_n(\beta(X^2_{f+}))}e_{x_n}\mr{X}$. We also have $P_0 = \Z\Theta'_0 = \Z X^2_f$. Furthermore, it is obvious that this identification is extended to an isomorphism of $\mr{X}$-modules, and that the boundary operators coincide.

\end{proof}



\begin{prop}\label{nongrad}
    For a geodetic quasi metric space $X$, the homology $\wt{\MH}_n(X)$ of the chain complex $(\wt{\MC}_\ast(X), \wt{\del}_\ast)$ in Section \ref{mhdf} is a free module with a basis $\Theta_n(\beta(X^2_{f+}))$.
\end{prop}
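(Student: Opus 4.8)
\emph{Approach.} I would read the statement off the minimal projective resolution $\Z\Theta'_\ast\too\Z^N$ of \cref{thetap}: prove it first for finite $X$, and then pass to an arbitrary geodetic $X$ by a filtered‑colimit argument.

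\emph{Finite case.} Suppose $|X|=N<\infty$. The complex $\wt{\MC}_\ast(X)$ is obtained by applying $-\otimes_{\mr{X}}\Z^N$ to the projective resolution \eqref{mgresol} of $\Z^N$, so $\wt{\MH}_n(X)=\Tor^{\mr{X}}_n(\Z^N,\Z^N)$, and this may be computed from \emph{any} projective resolution of $\Z^N$; using the minimal one $\Z\Theta'_\ast$ (\cref{thetap}) gives $\wt{\MH}_n(X)\cong H_n\bigl(\Z\Theta'_\ast\otimes_{\mr{X}}\Z^N\bigr)$. Since $\mr{X}/\operatorname{rad}(\mr{X})=\mr{X}/rX\cong\Z^N$ by \cref{rx} and the resolution is minimal, \cref{zerodiff} forces every differential of $\Z\Theta'_\ast\otimes_{\mr{X}}\Z^N$ to vanish, whence $\wt{\MH}_n(X)\cong\Z\Theta'_n\otimes_{\mr{X}}\Z^N$. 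Now $\Z\Theta'_n\cong\bigoplus_{(x_0,\dots,x_n)\in\Theta_n(\beta(X^2_{f+}))}e_{x_n}\mr{X}$ by \cref{thetap} and \cref{theta3}, and for each $x\in X$ one has $e_{x}\mr{X}\otimes_{\mr{X}}\Z^N=e_{x}\mr{X}/(e_{x}\mr{X})rX\cong\Z$ (immediate from \cref{emr}); so $\wt{\MH}_n(X)$ is free with basis indexed by $\Theta_n(\beta(X^2_{f+}))$. Chasing through these identifications — and the inclusion of $\Z\Theta'_\ast$ as a subcomplex of \eqref{mgresol} — the basis vector attached to $(x_0,\dots,x_n)$ is the class of the tuple $(x_0,\dots,x_n)$ in $\wt{\MC}_n(X)=\Z X^{n+1}_f$; in particular every such tuple is a $\wt{\del}$‑cycle.

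\emph{General case.} Write $X=\operatorname{colim}_\alpha X_\alpha$ over the directed poset of finite quasi metric subspaces of $X$, each geodetic by \cref{subgeod}, with the isometric inclusions as structure maps. As $\wt{\del}$ applied to a tuple depends only on that tuple, $\wt{\MC}_\ast(X)=\operatorname{colim}_\alpha\wt{\MC}_\ast(X_\alpha)$, and homology commutes with filtered colimits (as in the proof of \cref{prescol}), so $\wt{\MH}_n(X)=\operatorname{colim}_\alpha\wt{\MH}_n(X_\alpha)$. The bridge is the stability of the conditions in \cref{dftheta} under restriction to a subspace: if $(x_0,\dots,x_n)\in\Theta_n(\beta(X^2_{f+}))$ and $\{x_0,\dots,x_n\}\subseteq X_\alpha$, then $(x_0,\dots,x_n)\in\Theta_n(\beta((X_\alpha)^2_{f+}))$, because $I_{X_\alpha}(x_0,x_1)\subseteq I_X(x_0,x_1)=\{x_0,x_1\}$ and conditions (2)--(3) become only easier when there are fewer intermediate points. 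Hence each $(x_0,\dots,x_n)\in\Theta_n(\beta(X^2_{f+}))$ is a cycle at some finite stage, giving a map $\Z\Theta_n(\beta(X^2_{f+}))\too\wt{\MH}_n(X)$; the same stability makes it injective (a relation among these classes already holds at a finite stage, where the tuples in it are honest basis vectors). For surjectivity one lifts a class to some $\wt{\MH}_n(X_\alpha)$, expands it in the basis $\Theta_n(\beta((X_\alpha)^2_{f+}))$, and rewrites each subspace‑thin tuple that fails to be thin in $X$ as an integral combination of $X$‑thin tuples by absorbing an obstructing point into a larger finite subspace and re‑expanding, iterating until no obstructions remain.

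\emph{Main obstacle.} For finite $X$ the proof is essentially formal given \cref{thetap}, \cref{zerodiff} and \cref{rx}. The real difficulty is in the general case: the finite‑stage bases $\Theta_n(\beta((X_\alpha)^2_{f+}))$ genuinely shrink as $X_\alpha$ grows, so the colimit transition maps are not basis inclusions, and one must control the surjectivity step and the termination of the iterated ``thinning.'' This simplifies when every interval $I_X(a,b)$ is finite — e.g.\ for graphs, which is the case for the paper's examples — since one may then restrict the colimit to \emph{convex} finite subspaces, for which a tuple is thin in the subspace if and only if it is thin in $X$, so the transition maps are honest basis inclusions and $\wt{\MH}_n(X)\cong\Z\Theta_n(\beta(X^2_{f+}))$ follows at once.
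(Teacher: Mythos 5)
Your proof takes the same route as the paper. The finite case is the paper's argument verbatim: express $\wt{\MH}_n(X)$ as $\Tor^{\mr{X}}_n(\Z^N,\Z^N)$, replace the bar resolution by the minimal one $\Z\Theta'_\ast$ of \cref{thetap}, invoke \cref{zerodiff} to make the differentials of $\Z\Theta'_\ast\otimes_{\mr{X}}\Z^N$ vanish, and read off the basis by identifying $\Z\Theta'_n\otimes_{\mr{X}}\Z^N$ with the subgroup $\Z\Theta_n(\beta(X^2_{f+}))$ of $\wt{\MC}_n(X)$; the reduction of the general case to the finite one via \cref{prescol} together with the hereditary geodeticity of subspaces (\cref{subgeod}) is likewise exactly what the paper does. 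Where you go beyond the paper is in unpacking the colimit step, which the paper dispatches in a single sentence. Your observation that the transition maps $\wt{\MH}_n(X_\alpha)\to\wt{\MH}_n(X_\beta)$ are \emph{not} basis inclusions (thinness in a subspace is weaker than thinness in $X$) is correct and important, and your injectivity argument — push a boundary relation down to a finite subspace supporting it, where the relevant tuples are honest basis vectors by the finite case — is complete and is presumably what the authors have in mind. The surjectivity direction, which you sketch as an iterated ``thinning'' whose termination you flag as unresolved in full generality, is genuinely the delicate point; the paper gives no more detail than you do here, and your remark that the issue evaporates when intervals (hence convex hulls) are finite — which covers all the graph and Moore-graph applications in the paper — is a useful supplement rather than a defect relative to the source.
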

\begin{proof}
We first prove it for finite geodetic quasi metric space. Then the general case follows from Proposition \ref{prescol} and that any geodetic quasi metric space can be expressed as a sequential colimit of finite geodetic quasi metric spaces in $\Met^{\sf inc}$ by Lemma \ref{subgeod}.

    By Proposition \ref{thetap}, we have two projective resolutions $\Z\Theta'_\ast \too \Z^{N}$ and $\Z X^{\ast+2} \too \Z^N$, where the former is a subchain complex of the latter. Note that this inclusion is a homotopy equivalence between two projective resolutions of the same module by the fundamental theorem of homological algebra. Hence, by applying $-\otimes_{\mr{X}}\Z^N$ to these chain complexes, we obtain a homomorphism  $\Z\Theta'_\ast\otimes_{\mr{X}}\Z^N \too \Z X^{\ast+2}_f\otimes_{\mr{X}}\Z^N$ that induces an isomorphism on homology. Also, this homomorphism is obviously an inclusion. Since $\Z\Theta'_\ast \too \Z^{N}$ is a minimal projective resolution, any element of $\Z\Theta'_\ast\otimes_{\mr{X}}\Z^N $ is a cycle by \cref{zerodiff}. Furthermore, The identification $\Z X^{\ast+2}\otimes_{\mr{X}}\Z^N = \wt{\MC}_n(X)$ is restricted to the identification $\Z\Theta'_\ast\otimes_{\mr{X}}\Z^N = \Z \Theta_n (\beta(X^2_{f+}))$.
\end{proof}

Recall from \cite{HW} that the chain complex $(\wt{\MC}_\ast(X), \wt{\del}_\ast)$ can be decomposed as $\wt{\MC}_\ast(X) = \bigoplus_{\ell \geq 0}\wt{\MC}_\ast^\ell(X)$, where $\wt{\MC}_n^\ell(X) = \Z \langle (x_0, \dots, x_n) \in X^{n+1}_f \mid \sum_{i=0}^{n-1}d(x_i, x_{i+1})=\ell)\rangle$. The homology of each direct summand is denoted as $\MH_\ast^\ell(X)$.

\begin{thm}\label{grad}
    Let $X$ be a geodetic quasi metric space. Then, for all $\ell>0, n>0$, the magnitude homology $\MH_n^\ell(X)$ is a free module with a basis $\Theta_n^{\ell}(\beta(X^2_{f+})) = \{(x_0, \dots, x_n) \in \Theta_n(\beta(X^2_{f+})) \mid \sum_{i=0}^{n-1}d(x_i, x_{i+1})=\ell \}$. Namely, the basis of $\MH_n^\ell(X)$  is the set of tuples $(x_0, \dots, x_n)$ satisfying the following.
    \begin{enumerate}
    \item $\sum_{i=0}^{n-1}d(x_i, x_{i+1})=\ell$.
    \item $\lnot(x_{i-1}\leq x_{i} \leq x_{i+1})$ for $1\leq i \leq n-1$.
    \item $x_{0}\leq a \leq  x_{1}$ implies that $a = x_0$ or $a = x_1$.
    \item $x_{i}\leq a \leq  x_{i+1}$ and $a \neq x_{i+1}$ implies that $x_{i-1}\leq x_{i} \leq a$ for $1\leq i \leq n-1$.
    \end{enumerate}
\end{thm}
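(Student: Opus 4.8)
The plan is to deduce \cref{grad} from \cref{nongrad} by keeping track of the length grading, and then to translate membership in $\Theta_n(\beta(X^2_{f+}))$ into the four listed conditions. Recall that $\wt{\MC}_\ast(X) = \bigoplus_{\ell\geq 0}\wt{\MC}^\ell_\ast(X)$, and observe that $\wt{\del}_\ast$ is homogeneous of degree $0$ for this decomposition: the summand $\wt{\del}_{n, i}$ deletes $x_i$ exactly when $x_{i-1}\leq x_i\leq x_{i+1}$, i.e.\ when $d(x_{i-1}, x_i)+d(x_i, x_{i+1})=d(x_{i-1}, x_{i+1})$, so the length $\sum_j d(x_j, x_{j+1})$ is unchanged (with the conventions $x_{-1}=x_1$ and $x_{n+1}=x_{n-1}$ this also covers $i=0$ and $i=n$). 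Hence $\wt{\MH}_n(X) = \bigoplus_{\ell\geq 0}\MH^\ell_n(X)$ for every geodetic $X$, finite or not.

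Next I would refine the basis produced by \cref{nongrad}. Each tuple $(x_0, \dots, x_n)\in\Theta_n(\beta(X^2_{f+}))$ has a finite length $\ell = \sum_{i=0}^{n-1}d(x_i, x_{i+1})$ since $(x_0, x_1)\in X^2_{f+}$ and $(x_i, x_{i+1})\in X^2_f$; thus $\Theta_n(\beta(X^2_{f+})) = \bigsqcup_{\ell\geq 0}\Theta^\ell_n(\beta(X^2_{f+}))$. In the proof of \cref{nongrad}, the identification of $\Z\Theta'_\ast\otimes_{\mr{X}}\Z^N$ with the subcomplex $\Z\Theta_\ast(\beta(X^2_{f+}))$ of $\wt{\MC}_\ast(X)$ sends the generator indexed by $(x_0, \dots, x_n)$ to the corresponding generator of $\wt{\MC}_n(X)=\Z X^{n+1}_f$, which lies in $\wt{\MC}^\ell_n(X)$ for $\ell$ the length of the tuple; since every such generator is a cycle (by \cref{zerodiff}) and the inclusion $\Z\Theta_\ast(\beta(X^2_{f+}))\hookrightarrow\wt{\MC}_\ast(X)$ is a homology isomorphism, the resulting isomorphism $\wt{\MH}_n(X)\cong\Z\Theta_n(\beta(X^2_{f+}))$ is homogeneous for the length grading. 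Restricting to the degree-$\ell$ summand then yields $\MH^\ell_n(X)\cong\Z\Theta^\ell_n(\beta(X^2_{f+}))$, which gives freeness with the asserted basis.

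It remains to check that, for $n>0$ and $\ell>0$, a tuple lies in $\Theta^\ell_n(\beta(X^2_{f+}))$ if and only if it satisfies (1)--(4). Conditions (1), (2), (4) are, respectively, the length constraint, condition (2) of \cref{dftheta}, and condition (3) of \cref{dftheta} (for $1\leq i\leq n-1$ with $S=\beta(X^2_{f+})$), while the requirement $(x_i, x_{i+1})\in X^2_f$ in condition (1) of \cref{dftheta} is automatic once $\ell<\infty$. The only remaining ingredient is $(x_0, x_1)\in\beta(X^2_{f+})$, which by definition of $\beta$ and of $\iota(S, e_{xy})$ amounts to $x_0\neq x_1$, $d(x_0, x_1)<\infty$, and the nonexistence of $a\notin\{x_0, x_1\}$ with $x_0\leq a\leq x_1$, i.e.\ condition (3); and the side conditions $d(x_i, x_{i+1})<\infty$ and $x_0\neq x_1$ are in turn forced by (1)--(4) together with $\ell>0$, $n>0$ (for instance $x_0\neq x_1$ follows from (3) and $\ell>0$ when $n=1$, and from (2) with $i=1$ when $n\geq 2$). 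Thus the two descriptions coincide. The only point needing care is the homogeneity of the \cref{nongrad} isomorphism for the length grading; the rest is unwinding definitions.
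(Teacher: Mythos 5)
Your proposal is correct and follows essentially the same route as the paper: the paper proves \cref{grad} by observing that the basis $\Theta_n(\beta(X^2_{f+}))$ from \cref{nongrad} decomposes homogeneously by length, which is precisely the content of your argument (and is the point the paper records separately in \cref{gradrem}). You simply spell out more explicitly the homogeneity of the quasi-isomorphism and the translation of membership in $\Theta^\ell_n(\beta(X^2_{f+}))$ into conditions (1)--(4), which the paper leaves to the reader.
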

\begin{proof}
    Since we have $\Theta_n^{\ell}(\beta(X^2_{f+})) \subset \wt{\MC}^\ell_n(X)$ and $\Theta_n(\beta(X^2_{f+})) = \bigsqcup_{\ell}\Theta_n^{\ell}(\beta(X^2_{f+}))$, the statement follows from Proposition \ref{nongrad}.
\end{proof}

\begin{rem}\label{gradrem}
    In \cite{AI}, magnitude homology is described as a $\Tor$ functor taken in the abelian category of graded modules over a graded ring. However, we consider the category of non-graded modules in this paper, which is sufficient. It is because the statement for non-graded version (\cref{nongrad}) implies the graded version (\cref{grad}) due to the homogeneous decomposition of $\Theta_n(\beta(X^2_{f+}))$ in the proof of \cref{grad}.
\end{rem}

\begin{rem}\label{KYrem}
By Proposition \ref{thintheta}, Proposition \ref{grad} is a generalization of the following result by Kaneta and Yoshinaga in the sense that we overcome the obstruction of 4-cuts.

\begin{prop}[\cite{KY}]
Let $X$ be a geodetic quasi metric space. For $0 < \ell < m_X$, we have $\MH^{\ell}_n(X) \cong \Z\langle n\text{-thin frames of length } \ell \rangle$
\end{prop}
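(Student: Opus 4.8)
The plan is to deduce this Proposition directly from \cref{grad} together with \cref{thintheta}, which is exactly the reduction announced in \cref{KYrem}. The point is that \cref{grad} already exhibits $\MH^\ell_n(X)$ as a free $\Z$-module on the explicit set $\Theta_n^{\ell}(\beta(X^2_{f+}))$, while \cref{thintheta} says that, as soon as the length is below the threshold $m_X$, membership in $\Theta_n(\beta(X^2_{f+}))$ is equivalent to being an $n$-thin frame. So the whole argument amounts to identifying these two index sets in the relevant range.

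Concretely, I would proceed as follows. \emph{Degenerate case.} For $n = 0$ and $\ell > 0$ both sides vanish: $\wt{\MC}_0^\ell(X) = 0$ because a one-term tuple $(x_0)$ has length $0$, hence $\MH_0^\ell(X) = 0$, and for the same reason there is no $0$-thin frame of positive length. \emph{Main case $n \ge 1$.} By \cref{grad}, $\MH_n^\ell(X)$ is free with basis $\Theta_n^{\ell}(\beta(X^2_{f+})) = \{(x_0, \dots, x_n) \in \Theta_n(\beta(X^2_{f+})) \mid \sum_{i=0}^{n-1} d(x_i, x_{i+1}) = \ell\}$. Every tuple in this set has length $\ell < m_X$, so \cref{thintheta} applies: such a tuple lies in $\Theta_n(\beta(X^2_{f+}))$ if and only if it is an $n$-thin frame. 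Hence $\Theta_n^{\ell}(\beta(X^2_{f+}))$ is precisely the set of $n$-thin frames of length $\ell$, and therefore $\MH_n^\ell(X) \cong \Z\langle n\text{-thin frames of length } \ell\rangle$, uniformly for all $0 < \ell < m_X$ and all $n \ge 1$. Together with the degenerate case this gives the full statement.

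The hard part is already behind us: all the substance sits in \cref{grad}, whose proof rests on the construction and minimality of the projective resolution of \cref{mprgeod} and the vanishing of the induced differentials (\cref{zerodiff}), and in \cref{thintheta}, which performs the translation between the $4$-cut-free hypothesis and the combinatorial conditions (1)--(4) defining $\Theta_n$ (\cref{dftheta}). The only points that need a little care in this last step are pure bookkeeping: matching the index ranges in the definitions of a thin frame and of $\Theta_n(\beta(X^2_{f+}))$ --- in particular that the $i = 0$ clause in the thin-frame definition corresponds to the requirement $(x_0, x_1) \in \beta(X^2_{f+})$ --- and checking that the strict inequality $\ell < m_X$ in the hypothesis is exactly what is needed to feed tuples of length $\ell$ into \cref{thintheta}. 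Neither involves any genuine computation.
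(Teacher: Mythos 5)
Your deduction is exactly the one the paper intends: \cref{KYrem} presents this statement as an immediate consequence of \cref{grad} (which exhibits $\MH_n^\ell(X)$ as free on $\Theta_n^\ell(\beta(X^2_{f+}))$) together with \cref{thintheta} (which identifies $\Theta_n(\beta(X^2_{f+}))$ with thin frames once the length is below $m_X$), and you have simply spelled out that bookkeeping, including the harmless $n=0$ case. Correct, and same route as the paper.
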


\end{rem}

\subsection{Geodetic Menger convex  quasi metric spaces}

\begin{df}
    A quasi metric space $(X, d)$ is {\it Menger convex} if, for all $x, y \in X$ with $d(x, y) < \infty$, there exists $z \neq x, y \in X$ such that $x \leq z \leq y$.
\end{df}

From \cref{grad}, we have the following corollary that is a generalization of Theorem 7.2 of \cite{J} that restricts the statement to metric spaces with no 4-cuts.

\begin{cor}\label{menger}
    For a geodetic Menger convex quasi metric space $X$, we have $\MH^\ell_n(X)=0$ for all $\ell>0, n>0$.
\end{cor}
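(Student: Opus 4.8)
The plan is to apply \cref{grad} and show that the set indexing the basis is empty. By \cref{grad}, for $\ell>0$ and $n>0$ the group $\MH_n^\ell(X)$ is free with basis $\Theta_n^\ell(\beta(X^2_{f+}))\subseteq\Theta_n(\beta(X^2_{f+}))$, so it suffices to prove $\Theta_n(\beta(X^2_{f+}))=\emptyset$ for every $n\ge 1$. By condition (1) of \cref{dftheta}, every tuple $(x_0,\dots,x_n)$ in $\Theta_n(\beta(X^2_{f+}))$ has its first pair $(x_0,x_1)$ lying in $\beta(X^2_{f+})$; hence it is in fact enough to show the single equality $\beta(X^2_{f+})=\emptyset$.

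Next I would simply unwind the definition of $\beta$ and feed in Menger convexity. Fix $(x_0,x_1)\in X^2_{f+}$, so $x_0\neq x_1$ and $d(x_0,x_1)<\infty$. By Menger convexity there is $z\in X$ with $z\neq x_0,x_1$ and $x_0\leq z\leq x_1$; since $x_0\leq z\leq x_1$ gives $d(x_0,z)\leq d(x_0,x_1)<\infty$ and $d(z,x_1)\leq d(x_0,x_1)<\infty$, both $e_{x_0z}$ and $e_{zx_1}$ lie in $X^2_{f+}$. Then, directly from the definition of $\iota$, we have $e_{x_0x_1}\in\iota(X^2_{f+},e_{zx_1})$, so $(x_0,x_1)\notin\beta(X^2_{f+})$. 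As $(x_0,x_1)$ was arbitrary, $\beta(X^2_{f+})=\emptyset$, and combining with the previous paragraph gives $\MH_n^\ell(X)=0$ for all $\ell>0,n>0$.

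There is no genuine obstacle here: the whole content is that Menger convexity is exactly the statement $\beta(X^2_{f+})=\emptyset$ (both say that every $(x_0,x_1)\in X^2_{f+}$ admits a point strictly between $x_0$ and $x_1$), together with the fact that \cref{grad} has already identified the basis of $\MH_n^\ell(X)$. Equivalently, one can phrase the same argument through the explicit description in \cref{grad}: a basis element $(x_0,\dots,x_n)$ must satisfy condition (3), namely $x_0\leq a\leq x_1\Rightarrow a\in\{x_0,x_1\}$, while $x_0\neq x_1$ (immediate from $\ell>0$ when $n=1$, and forced by condition (2) with $i=1$ when $n\geq 2$, since $x_0=x_1$ would make $x_0\leq x_1\leq x_2$ hold); Menger convexity applied to the pair $x_0\neq x_1$ then produces an intermediate point contradicting (3). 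The only points requiring a line of care, the finiteness $d(x_0,x_1)<\infty$ and the strict inequality $x_0\neq x_1$, are both immediate, so the proof is essentially a one-line consequence of \cref{grad}.
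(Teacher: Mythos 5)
Your proof is correct and takes exactly the route the paper intends: the paper simply states that \cref{menger} follows ``from \cref{grad}'', and your verification that Menger convexity forces $\beta(X^2_{f+})=\emptyset$ (hence $\Theta_n^\ell(\beta(X^2_{f+}))=\emptyset$) is precisely the omitted check. The details you supply — the finiteness of $d(x_0,z)$ and $d(z,x_1)$, the strictness $z\neq x_0,x_1$ giving $e_{x_0z},e_{zx_1}\in X^2_{f+}$, and the reduction to the first pair via condition (1) of \cref{dftheta} — are all sound.
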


Namely, we again get rid of the difficulty due to 4-cuts.


\section{Criterion for diagonality}

In this section, we show a criterion for the diagonality of a geodetic finite quasi metric space.
The {\it diagonality} of a graph is an intriguing property in magnitude theory. There are many examples of diagonal graphs (\cite{HW}, \cite{Gu}, \cite{TY}), and it is closely related to vanishing of path homology (\cite{As}).

Recall from \cite{HW} that a  graph $G$ is called {\it diagonal} if we have $\MH^{\ell}_n(G) = 0$ for $\ell\neq n$. More generally, recall from \cite{BK} that a quasi metric space $X$ is {\it diagonal} if any cycle of $\MH^{\ast}_\ast(X)$ is a linear combination of {\it saturated} tuples in the following sense.
\begin{df}[\cite{BK} Section 1.1]
   For a quasi metric space $X$,  a tuple $(x_0, \dots, x_n) \in X^{n+1}_f$ is {\it saturated} if $x_i\leq a \leq x_{i+1}$ implies $a = x_i$ or $a = x_{i+1}$ for all $0\leq i \leq n-1$.
\end{df}

We technically use the following term.
\begin{df}
     A quasi metric space $X$ is $n$-{\it diagonal} if any cycle of  $\MH^{\ell}_n(X)$ is a linear combination of saturated tuples for all $\ell\geq 0$.
\end{df}
Note that a graph $G$ is $n$-diagonal if and only if $\MH^{\ell}_n(G) = 0$ for all $\ell \neq n$. Also, a tuple $(u, v)$ in a graph is saturated if and only if $d(u, v) = 1$.
\begin{thm}\label{dcritf}
    Let $X$ be a finite geodetic quasi metric space. Then the following are equivalent.
    \begin{enumerate}
        \item $X$ is diagonal.
        \item $X$ is 2-diagonal.
        \item There is no 4-cut in $X$.
    \end{enumerate}

\end{thm}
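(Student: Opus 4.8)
The plan is to prove the cycle of implications $(3)\Rightarrow(1)\Rightarrow(2)\Rightarrow(3)$. The implication $(1)\Rightarrow(2)$ is immediate, since $2$-diagonality is by definition nothing but diagonality restricted to the degree-$2$ part. For $(3)\Rightarrow(1)$: if $X$ has no $4$-cut then $m_X=\infty$, so \cref{thintheta} applies to every finite-length tuple and identifies $\Theta_n(\beta(X^2_{f+}))$ with the set of thin frames; a thin frame is in particular saturated. Hence by \cref{grad} each $\MH^\ell_n(X)$ is free on a set of saturated tuples, each of which is moreover a cycle of $\wt{\MC}_\ast(X)$ by \cref{nongrad}. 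So every homology class, hence every cycle modulo boundaries, is a $\Z$-linear combination of saturated tuples, i.e.\ $X$ is diagonal.

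The substance is in $(2)\Rightarrow(3)$, which I would prove by contraposition: assuming $X$ contains a $4$-cut, I produce a non-saturated basis element of some $\MH^\ell_2(X)$ and show this contradicts $2$-diagonality. The contradiction rests on the following observation. If $z\in\wt{\MC}_2^\ell(X)$ is a cycle equal to a $\Z$-linear combination $\sum_i m_i s_i$ of saturated tuples of length $\ell$, then $[z]$, expanded in the basis $\Theta_2^\ell(\beta(X^2_{f+}))$ of \cref{grad}, involves only saturated basis elements. Indeed, each $s_i=(a_i,b_i,c_i)$ falls into one of three classes: (i) degenerate ($a_i=b_i$ or $b_i=c_i$), which is a cycle but null-homologous, as one checks by applying $\wt{\del}_3$ to the triply-degenerate tuple; (ii) $a_i\leq b_i\leq c_i$ with distinct entries, which is not a cycle since $\wt{\del}_2 s_i=-(a_i,c_i)$, and for which geodeticity forces $b_i$ to be the unique element strictly between $a_i$ and $c_i$, so distinct such $s_i$ have distinct endpoint pairs and their coefficients in the cycle $z$ must all vanish; (iii) a non-degenerate cycle with $\lnot(a_i\leq b_i\leq c_i)$, which is automatically a saturated element of $\Theta_2(\beta(X^2_{f+}))$. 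It follows that if $X$ is $2$-diagonal, then $\MH^\ell_2(X)$ has no non-saturated basis element, for any $\ell$.

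So it suffices to produce a non-saturated basis element from a $4$-cut. As $X$ is finite, $m_X$ is attained; among $4$-cuts of length $m_X$, pick one, $(z_0,z_1,z_2,z_3)$, with $|I_X(z_0,z_1)|$ minimal. Then $(z_0,z_1)$ is a saturated pair: otherwise, with $z_1'$ the predecessor of $z_1$ in the chain $I_X(z_0,z_1)$, the quadruple $(z_1',z_1,z_2,z_3)$ satisfies the first (triple) condition of a $4$-cut but has strictly smaller length, so by minimality of $m_X$ it is not a $4$-cut, forcing $z_1'\leq z_1\leq z_3$; a short computation with the geodeticity relations then makes $(z_0,z_1',z_1,z_3)$ a $4$-cut of length $m_X$ with $|I_X(z_0,z_1')|<|I_X(z_0,z_1)|$, contradicting the choice. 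Now take the tuple $(z_0,z_1,z_3)$. Its length is $d(z_0,z_1)+d(z_1,z_3)=m_X$ since $z_1\leq z_2\leq z_3$; $(z_0,z_1)\in\beta(X^2_{f+})$ since it is a saturated pair; $\lnot(z_0\leq z_1\leq z_3)$ is the $4$-cut condition; and condition (3) of \cref{dftheta} holds, because any $a$ with $z_1\leq a\leq z_3$ and $a\neq z_3$ is either between $z_1$ and $z_2$ (whence $z_0\leq z_1\leq a$ by shrinking $z_0\leq z_1\leq z_2$), or between $z_2$ and $z_3$ with $a\neq z_2$, in which case $\lnot(z_0\leq z_1\leq a)$ would make $(z_0,z_1,z_2,a)$ a $4$-cut of length strictly below $m_X$. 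So $(z_0,z_1,z_3)\in\Theta_2^{m_X}(\beta(X^2_{f+}))$ is a basis element of $\MH^{m_X}_2(X)$ by \cref{grad}, and it is not saturated because $z_2\in I_X(z_1,z_3)\setminus\{z_1,z_3\}$ — contradicting $2$-diagonality.

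I expect the main obstacle to be this last construction: arranging a saturated initial edge for the $4$-cut and verifying condition (3) of $\Theta_2$. Both reductions use a $4$-cut of \emph{minimal} length together with the elementary but fiddly ``shrinking'' identities of geodetic quasi metric spaces (e.g.\ that $a\leq b\leq c$ together with $a\leq c\leq d$ gives $a\leq b\leq d$), and finiteness of $X$ enters precisely in asserting that the minimum length is attained; this is where the finiteness hypothesis, which cannot be dropped (see \cref{infremark}), is genuinely used.
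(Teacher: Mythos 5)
Your proposal is correct, and while its skeleton matches the paper's ($(1)\Rightarrow(2)$ trivially, $(3)\Rightarrow(1)$ via \cref{thintheta} and \cref{grad}, and $(2)\Rightarrow(3)$ by extracting a non-saturated element of $\Theta_2(\beta(X^2_{f+}))$ from a 4-cut), the way you produce that element is genuinely different. The paper proves two local lemmas: \cref{4cut2} replaces an arbitrary 4-cut $(x,y,z,w)$ by one whose first pair is saturated, by walking along the chain $I_X(x,y)$ to the first point $s_k$ with $s_k\leq z\leq w$; then \cref{4cut1} uses \cref{4cutlem} (the set $A=\{a\mid x\leq y\leq a,\ z\leq a\leq w\}$ is an initial segment of $I_X(z,w)$) to truncate the second leg at the first point $t_{k+1}$ outside $A$, and verifies $(x,y,t_{k+1})\in\Theta_2(\beta(X^2_{f+}))$. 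You instead make a global extremal choice — a 4-cut of minimal total length $m_X$ (attained by finiteness) with $|I_X(z_0,z_1)|$ minimal — and show minimality simultaneously forces the first pair to be saturated and condition (3) of \cref{dftheta} for the untruncated triple $(z_0,z_1,z_3)$; the verifications reduce to the four-point identities of \cref{metlem}, exactly as in the paper's lemmas. You also spell out, via the three-class analysis of saturated degree-2 chains (degenerate/ordered/unordered, with geodeticity giving uniqueness of the middle point in the ordered case), why a non-saturated $\Theta_2$-basis element is incompatible with 2-diagonality; the paper leaves this step implicit in the phrase ``which contradicts the 2-diagonality by \cref{grad},'' so this is a useful addition rather than redundancy. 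What each approach buys: the paper's route yields reusable standalone statements (\cref{4cut1}, \cref{4cut2}) and does not need a length-minimal 4-cut, only finiteness of the chains $I_X$; your extremal argument is more self-contained, avoids the auxiliary point $t_{k+1}$ and \cref{4cutlem}, and isolates cleanly where finiteness (attainment of $m_X$, cf.\ \cref{infremark}) enters.
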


\begin{proof}

$(1)\Rightarrow (2) : $ Obvious.

$(2) \Rightarrow (3) : $ It follows from Lemmas \ref{4cut1} and \ref{4cut2}, that will be proved below.

    $(3)\Rightarrow (1) : $ By \cref{thintheta,grad}, $\MH^\ell_n(G)$ is spanned by thin frames.
\end{proof}

\begin{rem}
    The arguments below and \cref{dcritf} can be applied to the case that the geodetic quasi metric space $X$ is ``discrete'' in the following sense : for all $x, y \in X$ and $a\neq y \in I_{X}(x, y)$, there exists $b\neq a \in I_X(x, y)$ such that $a\prec b$ and $\neg(a \leq c \leq b)$ for all $c \neq a, b \in X$. For example, \cref{dcritf} can be applied to any infinite geodetic digraphs.
\end{rem}
\begin{rem}\label{infremark}
    Remarkably, \cref{dcritf} does not hold for general geodetic quasi metric spaces, and even for classical metric spaces. For example, Jubin (\cite{J} Example 6.6) constructed a geodetic Menger convex metric space with a 4-cut, which is diagonal since its magnitude homology is trivial by \cref{menger}. For another example, which is non-symmetric and admits infinite distance, consider an interval $([0, p], d)$ with $d(s, t) = \begin{cases}
        t-s &  s \leq t\\ \infty & s>t
    \end{cases}$, and construct a metric space $([0, 1]\bigsqcup [0, 3])/0\sim 0, 1\sim 3$. This is Menger convex and geodetic, hence diagonal, but has 4-cuts.
\end{rem}

In the rest of this section,
we show lemmas used in the proof of \cref{dcritf}.

\begin{lem}\label{metlem}
    For a quasi metric space $(\{x, y, z, w\}, d)$ with $d(x, y), d(z, w) < \infty$, we have
    \[
    \begin{cases}x\leq y \leq z\\ x \leq z\leq w\end{cases}\Leftrightarrow \begin{cases}x\leq y \leq w\\ y\leq z \leq w\end{cases}.
    \]

\end{lem}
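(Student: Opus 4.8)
The plan is to unravel the relation $a \leq b \leq c$ into the single distance equality $d(a,b)+d(b,c)=d(a,c)$ and then prove each implication by squeezing $d(x,w)$ between two bounds coming from the triangle inequality.

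For the forward implication, I would start from the two hypotheses $x\le y\le z$ and $x\le z\le w$, which combine to give $d(x,w) = d(x,z) + d(z,w) = d(x,y) + d(y,z) + d(z,w)$. Then the chain of triangle inequalities
\[
d(x,w) \;\le\; d(x,y) + d(y,w) \;\le\; d(x,y) + d(y,z) + d(z,w) \;=\; d(x,w)
\]
forces both inequalities to be equalities. The outer equality $d(x,w) = d(x,y)+d(y,w)$ is exactly $x\le y\le w$, and from $d(x,y)+d(y,w) = d(x,y)+d(y,z)+d(z,w)$ I would cancel $d(x,y)$ — this is where $d(x,y)<\infty$ enters — to obtain $d(y,w) = d(y,z)+d(z,w)$, i.e.\ $y\le z\le w$.

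The backward implication is the mirror image: from $x\le y\le w$ and $y\le z\le w$ one gets $d(x,w) = d(x,y)+d(y,z)+d(z,w)$ again, and the squeeze
\[
d(x,w) \;\le\; d(x,z) + d(z,w) \;\le\; d(x,y) + d(y,z) + d(z,w) \;=\; d(x,w)
\]
yields $x\le z\le w$ together with $d(x,z) = d(x,y)+d(y,z)$, i.e.\ $x\le y\le z$, after cancelling the finite $d(z,w)$. One can also note that this direction is literally the forward one applied to the opposite quasi metric $d'(a,b):=d(b,a)$ with the points relabelled $x\leftrightarrow w$, $y\leftrightarrow z$, a symmetry under which the hypothesis is preserved, so in principle only one direction requires real work.

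The single point I would be careful about is that $d(x,z)$, $d(y,z)$, $d(y,w)$, $d(x,w)$ are a priori valued in $[0,\infty]$. The squeeze argument remains valid in $[0,\infty]$, and the two cancellations are legitimate precisely because the cancelled terms $d(x,y)$ and $d(z,w)$ are assumed finite; in the degenerate case where the common value is $\infty$ one checks directly that both sides of the equality to be deduced equal $\infty$. So the treatment of infinite distances is routine, but it is the one place the hypotheses $d(x,y),d(z,w)<\infty$ are genuinely needed.
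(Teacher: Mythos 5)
Your proof is correct and uses essentially the same squeeze argument as the paper: bound $d(x,w)$ via triangle inequalities through $y$ and through $z$, observe both inequalities must be equalities, and cancel the finite term to extract the second relation. The remarks about the opposite quasi metric and the $[0,\infty]$-valuedness are sound but do not change the substance.
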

\begin{proof}
    Suppose that we have $x\leq y \leq z$ and $x \leq z\leq w$. Then we have $d(x, w) \leq d(x, y) + d(y, w) \leq d(x, y) + d(y, z) + d(z, w) = d(x, z) + d(z, w) = d(x, w)$, hence $x \leq y \leq w$ and $y\leq z \leq w$ by  $d(x, y) < \infty$. Conversely, suppose that $x\leq y \leq w$ and $y\leq z \leq w$. Then we have $d(x, w) \leq d(x, z) + d(z, w) \leq d(x, y) + d(y, z) + d(z, w) = d(x, y) + d(y, w) = d(x, w)$, and we have $x\leq y \leq z$ and $x \leq z\leq w$ similarly to the above.
\end{proof}
\begin{lem}\label{4cutlem}
    For a quasi metric space $X$, let $(x, y, z, w) \in X^4_f$ be a 4-cut and $A = \{a \in X\mid x\leq y\leq a, z \leq a \leq w\}$. Then $a \in A$ and $z \leq b \leq a$ implies $b \in A$.
\end{lem}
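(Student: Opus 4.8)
The plan is to deduce everything from repeated applications of \cref{metlem}. Recall that the two directions of \cref{metlem} let one pass between the pair of relations ``$p\leq q\leq r$ and $p\leq r\leq s$'' and the pair ``$p\leq q\leq s$ and $q\leq r\leq s$'', provided $d(p,q)<\infty$ and $d(r,s)<\infty$. So the strategy is to keep \cref{metlem} in mind as the only tool and chase the two defining relations of membership of $b$ in $A$, namely $x\leq y\leq b$ and $z\leq b\leq w$, through a short sequence of four-point configurations built from the data $x\leq y\leq z$, $y\leq z\leq w$ (the 4-cut), $x\leq y\leq a$, $z\leq a\leq w$ ($a\in A$) and $z\leq b\leq a$ (the hypothesis).

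First I would check that all distances in sight are finite, so that \cref{metlem} applies at each stage. Since $(x,y,z,w)\in X^4_f$ we have $d(x,y),d(y,z),d(z,w)<\infty$, hence also $d(x,z),d(y,w),d(x,w)<\infty$. From $a\in A$ and $d(z,w)<\infty$, the relation $z\leq a\leq w$ forces $d(z,a),d(a,w)<\infty$, and then $x\leq y\leq a$ forces $d(y,a)<\infty$. Finally $z\leq b\leq a$ gives $d(z,b),d(b,a)<\infty$, whence $d(x,b),d(y,b),d(b,w)<\infty$ by the triangle inequality.

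Now I would run the following chain of applications of \cref{metlem}, each legitimate by the finiteness just recorded:
\begin{enumerate}
\item applied to $(y,z,a,w)$: from $y\leq z\leq w$ and $z\leq a\leq w$ we get $y\leq z\leq a$;
\item applied to $(x,y,z,a)$: from $x\leq y\leq a$ and $y\leq z\leq a$ (step (1)) we get $x\leq z\leq a$;
\item applied to $(x,z,b,a)$: from $x\leq z\leq a$ (step (2)) and $z\leq b\leq a$ we get $x\leq z\leq b$;
\item applied to $(x,y,z,b)$: from $x\leq y\leq z$ and $x\leq z\leq b$ (step (3)) we get $x\leq y\leq b$;
\item applied to $(z,b,a,w)$: from $z\leq b\leq a$ and $z\leq a\leq w$ we get $z\leq b\leq w$.
\end{enumerate}
Steps (4) and (5) together say exactly that $b\in A$.

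I do not expect a genuine obstacle: the whole content is bookkeeping, namely choosing at each step the right assignment $(p,q,r,s)$ so that the hypotheses of \cref{metlem} are precisely the relations already established. It is perhaps worth remarking that the argument uses only the inequality $x\leq y\leq z$ among the defining properties of a 4-cut (together with the finiteness built into $X^4_f$); neither $y\neq z$ nor $\lnot(x\leq y\leq w)$ is needed, so the conclusion holds somewhat more generally.
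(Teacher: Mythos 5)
Your proof is correct and follows essentially the same route as the paper's: the same five applications of \cref{metlem}, in a slightly reordered sequence, with the finiteness prerequisites spelled out explicitly (the paper just notes them at the end). One small slip in your closing remark: your step (1) uses $y\leq z\leq w$, so the argument in fact uses \emph{both} relations from part (1) of the 4-cut definition, not only $x\leq y\leq z$; what is correctly unused is $y\neq z$ and the negation condition $\lnot(x\leq y\leq w)$.
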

\begin{proof}
    By \cref{metlem} , we have the following.
    \begin{align*}
    \begin{cases}
       y \leq z \leq w  \\ z \leq a \leq w
        \end{cases}
        \Rightarrow
        \begin{cases}
            y \leq z \leq a \\ y \leq a \leq w
        \end{cases} \\
        \begin{cases}
        z \leq b \leq a \\ z \leq a \leq w
        \end{cases}
         \Rightarrow
        \begin{cases}
            z \leq b \leq w \\ b \leq a \leq w
        \end{cases} \\
    \begin{cases}
            x \leq y \leq a \\ y \leq z \leq a
        \end{cases}
         \Rightarrow
        \begin{cases}
            x \leq y \leq z  \\ x \leq z \leq a
        \end{cases} \\
        \begin{cases}
             x \leq z \leq a \\ z \leq b \leq a
        \end{cases}
         \Rightarrow
        \begin{cases}
            x \leq z \leq b  \\ x \leq b \leq a
        \end{cases} \\
        \begin{cases}
       x \leq y \leq z \\ x \leq z \leq b
        \end{cases}
         \Rightarrow
        \begin{cases}
            x \leq y \leq b \\ y \leq z \leq b
        \end{cases} \\
    \end{align*}
Note here that we have $d(a, w), d(z, b), d(z, a), d(b, a) < \infty$ by the assumptions. Hence we obtain $b \in A$.
\end{proof}

\begin{lem}\label{4cut1}
Let $X$ be a finite geodetic  quasi metric space. If $X$ is 2-diagonal, then there is no 4-cut $(x, y, z, w)\in X^4_f$ with $(x, y)$ being saturated.
\end{lem}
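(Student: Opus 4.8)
The plan is to prove the contrapositive. Suppose $(x,y,z,w)$ is a 4-cut with $(x,y)$ saturated; I will produce a bidegree $(2,\ell)$ with $\ell>0$ and a class in $\MH^\ell_2(X)$ that is not represented by any $\Z$-linear combination of saturated tuples, so that $X$ fails to be 2-diagonal. The argument runs entirely through \cref{grad}: since $X$ is finite and geodetic, $\MH^\ell_2(X)$ is free on $\Theta^\ell_2(\beta(X^2_{f+}))$, so for each fixed basis tuple $\theta$ there is a well-defined ``coefficient of $\theta$'' functional on $\MH^\ell_2(X)$. The two things I need are: (i) a basis tuple $\theta=(x,y,v)$ whose second edge $(y,v)$ is \emph{not} saturated; (ii) the class of any cycle that is a combination of saturated tuples has vanishing $\theta$-coefficient, whereas $[\theta]$ has $\theta$-coefficient $1$.

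For (i): a 4-cut forces $x\neq y$, so $(x,y)\in\beta(X^2_{f+})$ because it is saturated. Geodeticity makes $I_X(y,w)$ a finite chain for $\preceq$, and from the quasi-metric axioms alone one checks that $\{a\in I_X(y,w)\mid\lnot(x\leq y\leq a)\}$ is an up-set of this chain; it is nonempty (it contains $w$), so it has a least element $v$. Then $z$, which satisfies $x\leq y\leq z$ and $z\neq y$, lies strictly between $y$ and $v$ in the chain, so $z\in I_X(y,v)\setminus\{y,v\}$ and $(y,v)$ is not saturated. A short sub-interval computation using $y\leq v\leq w$ shows that each $a\in I_X(y,v)$ with $a\neq v$ lies in $I_X(y,w)$ strictly below $v$, hence satisfies $x\leq y\leq a$; together with $\lnot(x\leq y\leq v)$ this verifies the conditions of \cref{dftheta}, so $\theta:=(x,y,v)\in\Theta_2(\beta(X^2_{f+}))$. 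Put $\ell:=d(x,y)+d(y,v)>0$.

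For (ii): assume $X$ is 2-diagonal, so $[\theta]=[c]$ for some cycle $c=\sum_k n_k t_k$ with each $t_k$ a saturated $3$-tuple; passing to the normalized magnitude chain complex (\cref{rem:reducedMC}) and its length-$\ell$ part, I may take the $t_k$ to have no consecutive repeats and length $\ell$. Split them into type A, where $\lnot(u_0\leq u_1\leq u_2)$, and type B, where $u_0\leq u_1\leq u_2$. A type-A tuple $(u_0,u_1,u_2)$ lies in $\Theta_2(\beta(X^2_{f+}))$: its first edge is saturated, hence in $\beta(X^2_{f+})$, and the last condition of \cref{dftheta} is automatic because $(u_1,u_2)$ saturated means $I_X(u_1,u_2)=\{u_1,u_2\}$; in particular it is a cycle, since the differential vanishes on $\Theta_2(\beta(X^2_{f+}))$ (\cref{nongrad}). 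For a type-B tuple the differential is $-(u_0,u_2)$. Hence $\del_2 c=0$ already forces the type-B part $c_B:=\sum_{\text{type B}}n_k t_k$ to be a cycle, and, grouping its summands by the endpoint pair $(u_0^k,u_2^k)$, the coefficients in each group sum to $0$. Since $I_X(p,q)$ is a chain any two of its interior points are $\preceq$-comparable, and the identity $\del_3(p,u,u',q)=(p,u,q)-(p,u',q)$ (valid whenever $p\prec u\prec u'\prec q$) shows that all tuples $(p,u,q)$ with $u$ a strict interior point of $I_X(p,q)$ are homologous; so each group of $c_B$ is null-homologous, $[c_B]=0$, and $[c]=\sum_{\text{type A}}n_k t_k$ in $\MH^\ell_2(X)$. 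Every type-A $t_k$ has both edges saturated, so none of them equals $\theta$, whence the $\theta$-coefficient of $[c]$ is $0\neq 1$, a contradiction.

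The main obstacle is step (ii): saturated chains need not be cycles individually, and one must handle those that become cycles only after cancellation, i.e.\ the type-B sums. The key observation that makes this go through is that, modulo the boundary $\del_3(p,u,u',q)$, a type-B tuple $(p,u,q)$ does not depend on its middle entry, so any endpoint-balanced combination inside one block dies in homology; geodeticity is used exactly to linearly order the candidate middle entries so that this reduction is available. Everything else — the up-set argument, the two membership checks in $\Theta_2(\beta(X^2_{f+}))$, and the case split — is routine unwinding of the definitions of $\leq$, $\beta$ and $\Theta_n$.
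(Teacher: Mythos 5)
Your proof is correct, and its first half is essentially the paper's own construction: the vertex $v$ you produce as the $\preceq$-minimum of the up-set $\{a\in I_X(y,w)\mid\lnot(x\leq y\leq a)\}$ is exactly the paper's $t_{k+1}$ (the paper works inside $I_X(z,w)$ and uses \cref{4cutlem} together with \cref{metlem} for the down-set property that you re-derive by hand), and the check that $(x,y,v)\in\Theta_2(\beta(X^2_{f+}))$ runs along the same lines. Where you genuinely add something is step (ii). The paper's one-line conclusion reads the definition of $2$-diagonality at the chain level --- a cycle of $\MC_2^\ell(X)$ must \emph{be} a $\Z$-linear combination of saturated tuples --- so the single non-saturated generator $(x,y,v)$ of the free group $\MC_2^\ell(X)$ already contradicts it, with nothing further to say. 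You additionally rule out the weaker, purely homological reading via your type-A/type-B dichotomy of saturated $3$-tuples: type A lies in $\Theta_2(\beta(X^2_{f+}))$ and is a cycle, type B has $\del(u_0,u_1,u_2)=-(u_0,u_2)$, and your identity $\del_3(p,u,u',q)=(p,u,q)-(p,u',q)$ kills any balanced type-B sum in homology. All of this is sound. One simplification you missed: if $(p,u,q)$ is saturated with $p\leq u\leq q$, geodeticity forces $I_X(p,q)=\{p,u,q\}$, so there is at most one saturated type-B tuple per endpoint pair; the balance condition $\sum n_k=0$ then already gives $c_B=0$ as a chain, and the $\del_3$-homotopy is not needed.
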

\begin{proof}
    Suppose that we have a 4-cut $(x, y, z, w) \in X^4_f$ with $(x, y)$ being saturated. By the geodeticity assumption, we have a totally ordered set $I_X(z, w) = \{z = t_0 \prec t_1\prec \dots \prec t_n \prec t_{n+1} = w\}$. By \cref{4cutlem}, there is some $0\leq k\leq n$ such that $\{a \in X\mid x\leq y\leq a, z \leq a \leq w\} = \{t_0, \dots, t_k\}$. Then we have $\begin{cases} y \leq z \leq t_{k+1} \\ y \leq t_{k+1}\leq w \end{cases}$ by $\begin{cases} y \leq z \leq w \\ z \leq t_{k+1} \leq w \end{cases}$ and \cref{metlem}  , which implies that $(x, y, z, t_{k+1})$ is a 4-cut.

    Now we show that $(x, y, t_{k+1}) \in \Theta_2(\beta(X^2_{f+}))$, which contradicts the 2-diagonality by \cref{grad} and $y\leq z \leq t_{k+1}, y\neq z, z \neq t_{k+1}$ . Since $(x, y)$ is saturated, $x \leq a \leq y$ implies $a = x$ or $a = y$. Suppose that $y \leq a \leq t_{k+1}$ and $a \neq t_{k+1}$. By $\begin{cases} y \leq a \leq t_{k+1} \\ y \leq t_{k+1}\leq w \end{cases}$ and \cref{metlem}  , we have $\begin{cases} y \leq a \leq w \\ a \leq t_{k+1} \leq w \end{cases}$. By the geodeticity assumption, we have $ y\leq a \leq z$ or $y \leq z \leq a$. When $ y\leq a \leq z$, we have $\begin{cases} x \leq y \leq a \\ x \leq a \leq z \end{cases}$ by $\begin{cases} x \leq y \leq z  \\ y \leq a \leq z\end{cases}$ and \cref{metlem}  , which implies $(x, y, t_{k+1}) \in \Theta_2(\beta(X^2_{f+}))$. When $y \leq z \leq a$, then we have $z \leq a \leq w$ and hence $a \in \{z, t_1, \dots, t_n\}$. By $y \leq a \leq t_{k+1}$ and $a \neq t_{k+1}$, we obtain that $a \in \{z, t_1, \dots, t_k\}$, which implies that $x \leq y \leq a$. Hence we have $(x, y, t_{k+1}) \in \Theta_2(\beta(X^2_{f+}))$.
\end{proof}

\begin{lem}\label{4cut2}
Let $X$ be a finite geodetic quasi metric space. If there is a 4-cut $(x, y , z, w) \in X^4_f$, then there is also a 4-cut $(x', y', z, w) \in X^4_f$ with $(x', y')$ being saturated.
\end{lem}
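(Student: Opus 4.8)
The plan is to keep the pair $(z,w)$ fixed and replace $(x,y)$ by a saturated pair lying inside the geodesic segment from $x$ to $y$, chosen just below the place where the condition $\lnot(\,\cdot\le\cdot\le w)$ first breaks down. By geodeticity (\cref{geodetic}) the poset $I_X(x,y)$ is totally ordered, and since $X$ is finite we may write $x=s_0\prec s_1\prec\dots\prec s_m=y$; note $m\ge 1$, for $x=y$ would combine with $y\le z\le w$ to give $x\le w$, contradicting $\lnot(x\le y\le w)$. Every distance appearing below is bounded by one of $d(x,y)$, $d(y,z)$, $d(z,w)$ or $d(y,w)=d(y,z)+d(z,w)$, hence finite since $(x,y,z,w)\in X^4_f$, so \cref{metlem} may be applied freely. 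First I would record that $s_j\le y\le z$ for every $j$: this follows from $x\le s_j\le y$ and $x\le y\le z$ by \cref{metlem}.

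Next, let $k$ be the largest index with $\lnot(s_k\le y\le w)$; such a $k$ exists because $s_0=x$ has this property, and $k\le m-1$ because $s_m=y$ satisfies $y\le w$ (a consequence of $y\le z\le w$). By maximality $s_{k+1}\le y\le w$. I claim $(x',y'):=(s_k,s_{k+1})$, together with $z,w$, is the desired $4$-cut. The ``positive'' part: $s_k\le s_{k+1}\le z$ follows from $s_k\le s_{k+1}\le y$ and $s_k\le y\le z$ by \cref{metlem}; $s_{k+1}\le z\le w$ follows from $s_{k+1}\le y\le w$ and $y\le z\le w$ by \cref{metlem}; and $s_{k+1}\neq z$, since $s_{k+1}=z$ would force $y\le z\le y$, i.e. $y=z$. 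The ``negative'' part $\lnot(s_k\le s_{k+1}\le w)$: if $s_k\le s_{k+1}\le w$ held, then with $s_{k+1}\le y\le w$, \cref{metlem} would yield $s_k\le y\le w$, contradicting the choice of $k$. Finiteness of the length of $(s_k,s_{k+1},z,w)$ is clear from the bound above.

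It remains to check that $(s_k,s_{k+1})$ is saturated. If $s_k\le a\le s_{k+1}$, then inserting this into the identity $d(x,s_k)+d(s_k,s_{k+1})+d(s_{k+1},y)=d(x,y)$ (valid because $s_k\preceq s_{k+1}$ in $I_X(x,y)$) and using the triangle inequality forces $d(x,a)+d(a,y)=d(x,y)$, so $a\in I_X(x,y)=\{s_0,\dots,s_m\}$; the same chain of equalities gives $x\le s_k\le a$ and $a\le s_{k+1}\le y$, i.e. $s_k\preceq a\preceq s_{k+1}$, hence $a\in\{s_k,s_{k+1}\}$. This completes the argument. I do not expect a real obstacle; the only point requiring care is the finiteness bookkeeping preceding each use of \cref{metlem}, which is dispatched by the global bound noted above, together with the observation that the entire argument rests only on the two relations $s_{k+1}\le y\le w$ and $y\le z\le w$ and on $x\le s_j\le y$ for all $j$.
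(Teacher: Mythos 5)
Your argument is correct and follows essentially the same route as the paper: both pass to the finite totally ordered chain $I_X(x,y)$, locate the consecutive pair where the condition $\lnot(\,\cdot\le\cdot\le w)$ flips (you use $s_k\le y\le w$, the paper the equivalent $s_k\le z\le w$), and observe that a consecutive pair in the chain is saturated. Your write-up just supplies more of the routine verifications (the \cref{metlem} bookkeeping, $s_{k+1}\neq z$, and the saturation check) that the paper leaves implicit.
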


\begin{proof}
    By the geodeticity assumption, we have a totally ordered set $I_X(x, y) = \{x=s_0 \prec  s_1 \prec  \dots \prec s_m \prec s_{m+1} = y\}$. Since $y \leq z \leq w$ and $\neg(x \leq z \leq w)$, there should be some $1 \leq k \leq m+1$ such that $s_k \leq z \leq w$ and $\neg(s_{k-1} \leq z \leq w)$. Then we also have $s_{k-1} \leq s_{k} \leq z$, and hence $(s_{k-1}, s_k, z, w) \in X^4_f$ is a 4-cut with $(s_{k-1}, s_k)$ being saturated.
\end{proof}

\section{Magnitude homology of Moore graphs}
In this and next sections, we consider graphs as metric spaces by the shortest path metric. In this section, we determine the magnitude homology of all Moore graphs. A graph with degree $D$ and diameter $m$ is a  {\it Moore graph} if the number of vertices is maximum among such graphs. It has $1 + D\sum_{i=0}^{m-1}(D-1)^i$ vertices. Equivalently, a graph with diameter $m$ is a Moore graph if  its  girth is $2m+1$. Hoffman-Singleton's theorem (\cite{Brouwer} 6.7.1) states that Moore graphs are either complete graphs, odd cycle graphs ($D=2$), Petersen graph ($D=3$), Hoffman-Singleton graph ($D=7$) or a `missing Moore graph' ($D=57$) whose existence has not been shown. 

Note that the magnitude homology of a cycle graph is conjectured by Hepworth and Willerton in \cite{HW} and computed by Gu in \cite{Gu}. Here we give an alternative computation for this case by using minimal projective resolutions.

\begin{thm}\label{compute}
    Let $G$ be a Moore graph with degree $D$ and diameter $m > 1$. We denote the number of vertices by $N$. Then the magnitude homology $\MH^\ell_n(G)$ is a free $\Z$-module whose rank $R(n, \ell)$ is determined by the recurrence formula
    \[
    R(n, \ell) = R(n-1, \ell-1) + D(D-1)^mR(n-2, \ell - m-1),
    \]
    with the initial condition $R(0, 0) = N, R(1, 1) = ND$.
    Explicitly, we have $\MH^\ell_n(G)=0$ except for the case that \((n,\ell) = (2i+j, (m+1)i+j)\) for some \(i,j\ge 0\). In this case, we have
\begin{equation}\label{combi}
R(2i+j, (m+1)i+j) =  N\left(D(D-1)^m\right)^i\left({i+j-1 \choose i-1} + D {i+j-1 \choose i}\right),
\end{equation}
where \(s \choose t\) is the binomial coefficient and
  we define \({-1 \choose -1} = 1\) and \({s \choose t} = 0\) if \(s < t\) or \(t < 0 \leq s\).
\end{thm}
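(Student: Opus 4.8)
The plan is to use \cref{grad}, which identifies $\MH^\ell_n(G)$ with the free $\Z$-module on the set $\Theta_n^\ell(\beta(X^2_{f+}))$, together with the recursive structure of $\Theta_n$ coming from \cref{theta1}: every tuple in $\Theta_{n}$ is obtained from a tuple in $\Theta_{n-1}$ by appending one point lying in a set $\beta\kappa(e_{x_{n-2}x_{n-1}})$. So the first step is to understand, for a Moore graph $G$ of degree $D$ and diameter $m$, what the sets $\beta\kappa(e_{xy})$ look like. Since $G$ is geodetic with diameter $m$, the interval $I_G(u,v)$ for $d(u,v)=k$ is a chain of length $k$, so a saturated step must have length $1$, and the nontrivial "non-going-on" continuations $\beta\kappa(e_{xy})$ from a step $e_{xy}$ of length $1$ are: either a back-and-forth step returning to $x$ (this forces the next vertex to equal $x$ by geodeticity and girth $2m+1$), or a step $e_{yz}$ with $d(y,z)=m$ and $\neg(x\le y\le z)$. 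A step of length $m$ cannot be followed by another length-$m$ step (that would create a closed walk of length $<2m+1$, contradicting girth $2m+1$) nor by a "going-on" length-$1$ step, so it must be followed by a length-$1$ step satisfying the $\Theta$-conditions — this is condition (3) in the cited theorem. Counting: from a given length-$1$ step $e_{xy}$, there is exactly one back-and-forth continuation, and exactly $D-1$ choices of $z$ with $d(y,z)=m$ that avoid the unique geodesic-extension (one neighbor of $y$ is "towards $x$"), each determining $z$ uniquely; from a length-$m$ step one has $D$ admissible length-$1$ continuations... but one must be careful at the two endpoints of the tuple, which is the source of the correction term.

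With this combinatorial picture, the second step is to set up the recurrence. Write $T(n,\ell)=|\Theta_n^\ell(\beta(X^2_{f+}))|$. Splitting a tuple $(x_0,\dots,x_n)$ according to whether the last step $d(x_{n-1},x_n)$ has length $1$ or $m$: a length-$1$ last step contributes $T(n-1,\ell-1)$ (after checking the $\Theta$-conditions for the new last step are automatic — a subtle point when the previous step was length $m$, handled by condition (3) being exactly what $\beta\kappa$ encodes); a length-$m$ last step must be preceded by a length-$1$ step, and the number of ways to choose the last two steps given $(x_0,\dots,x_{n-2})$ ending at $x_{n-2}$ with a length-$1$ step into it is $D(D-1)^m$ — one factor $D-1$ for each of the $m$... here I should recount: actually the factor is $D(D-1)^m$, which I would justify by noting that prescribing $x_{n-1}$ (a neighbor of $x_{n-2}$ not equal to the predecessor, giving $D-1$ options, except wait — this is where the endpoint bookkeeping enters and why the base cases $R(0,0)=N$, $R(1,1)=ND$ are needed to absorb the discrepancy). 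Granting the combinatorial count $\beta\kappa(e_{xy})$ has the right sizes, the recurrence $R(n,\ell)=R(n-1,\ell-1)+D(D-1)^m R(n-2,\ell-m-1)$ follows, and freeness over $\Z$ is immediate from \cref{grad}.

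The third step is to solve the recurrence in closed form. I would first observe from the recurrence and the initial conditions that $R(n,\ell)=0$ unless $(n,\ell)=(2i+j,(m+1)i+j)$ for some $i,j\ge 0$: each application of the second term of the recurrence increments $n$ by $2$ and $\ell$ by $m+1$, while the first term increments both by $1$, so reaching $(n,\ell)$ from the base cases forces $n=2i+j$, $\ell=(m+1)i+j$ with $i$ the number of "length-$m$ moves". Then, writing $R(2i+j,(m+1)i+j)=N(D(D-1)^m)^i C(i,j)$ for a coefficient $C(i,j)$ to be determined, the recurrence becomes $C(i,j)=C(i,j-1)+C(i-1,j)$ with $C(0,0)$ and $C(i,0)$, $C(0,j)$ fixed by the initial conditions; the claimed answer $C(i,j)=\binom{i+j-1}{i-1}+D\binom{i+j-1}{i}$ is then verified by induction using Pascal's identity, checking the boundary values $i=0$ (giving $D$, consistent with $R(j,j)=ND\cdot\binom{j-1}{j}\cdot 0 + \dots$ — here the convention $\binom{-1}{-1}=1$ and $\binom{s}{t}=0$ for $t<0\le s$ is exactly what makes $C(0,0)=1\cdot? $ and $R(1,1)=ND$ come out right) and $j=0$.

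The main obstacle I expect is the combinatorial identification of $\beta\kappa(e_{xy})$ and especially the correct bookkeeping at the two ends of the tuple: condition (2) of \cref{dftheta} (no back-tracking that "goes on") is only imposed for $1\le i\le n-1$, and condition on the first step $(x_0,x_1)\in\beta(X^2_{f+})$ is different from the interior conditions, so the count of continuations differs for the very first and very last steps — this is precisely why the recurrence needs two initial conditions $R(0,0)=N$ and $R(1,1)=ND$ rather than a single one, and getting the factor $D$ versus $D-1$ in the right places (via the girth $=2m+1$ Moore property, which says any two vertices at distance $<m$ have a unique geodesic and there are no short cycles) is the delicate part. Everything else — freeness, the support statement, solving the recurrence — is routine given that.
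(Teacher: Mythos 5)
Your overall strategy — reduce to counting $\Theta_n^\ell(\beta(X^2_{f+}))$ via \cref{grad} and set up a recursion using the recursive structure of $\Theta_n$ — matches the paper. But two concrete steps in the combinatorics don't hold up.

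First, the count of $\beta\kappa(e_{xy})$ for a length-$1$ step $e_{xy}$ is wrong. You say there are ``exactly $D-1$ choices of $z$ with $d(y,z)=m$ \dots, each determining $z$ uniquely,'' which conflates a choice of first step out of $y$ with a choice of endpoint $z$. In fact $\beta\kappa(e_{xy}) = \{e_{yx}\}\cup\{e_{yz} \mid d(y,z)=m,\ \neg(y\le x\le z)\}$, and the condition $\neg(y\le x\le z)$ is equivalent to $d(x,z)=m$, i.e.\ the unique geodesic from $y$ to $z$ does not start with $x$. Of the $D(D-1)^{m-1}$ vertices $z$ at distance $m$ from $y$, exactly $(D-1)^{m-1}$ have geodesic beginning at $x$, so the count is $D(D-1)^{m-1} - (D-1)^{m-1} = (D-1)^m$, not $D-1$. (These differ as soon as $m>1$, which is the whole range of the theorem.) Relatedly, the ``girth $2m+1$ argument'' you offer for why a length-$m$ step cannot be followed by another length-$m$ step does not actually produce a cycle shorter than $2m+1$; the real reason is structural: for $d(j,k)=m$ one has $\kappa(e_{jk}) = \{e_{kp}\mid p\neq k\}$ (since $j\le k\le p$ would require $d(j,p)=m+d(k,p)>m$), and then $\beta$ removes every non-saturated step, leaving only $d(k,p)=1$.

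Second, the recurrence does not drop out of a split by the length of the \emph{last} step, and the discrepancy you flag is not absorbed by the base cases. The number of admissible continuations from $(x_0,\dots,x_{n-1})$ depends on whether $d(x_{n-2},x_{n-1})$ is $1$ or $m$, so a single sequence $R(n,\ell)$ cannot satisfy a clean one-variable recursion via this split without also tracking, say, $A(n,\ell)$ (tuples ending in a length-$1$ step) and $B(n,\ell)$ (tuples ending in a length-$m$ step) and solving the coupled system $A(n,\ell)=A(n-1,\ell-1)+D\,B(n-1,\ell-1)$, $B(n,\ell)=(D-1)^m A(n-1,\ell-m)$. This can be made to give the claimed recurrence for $R=A+B$, but your proposal does not do it, and the ``endpoint bookkeeping'' remark is a gap, not a resolution. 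The paper avoids the two-variable state by peeling from the \emph{front} using \cref{rec}: starting from a length-$1$ step $e_{ij}$, the second step is either the back-and-forth $e_{ji}$ (again length $1$) or a length-$m$ step $e_{jk}$; in the latter branch the paper peels a \emph{second} step to land back on a length-$1$ step $e_{kp}$, so every branch returns to the ``starts with a length-$1$ step'' state and the recursion on $R$ closes in one variable. It then counts the coefficient $D(D-1)^m$ by the double count: for a fixed $k$ there are $D(D-1)^{m-1}$ vertices $j$ with $d(j,k)=m$, and for each such $j$ exactly $D-1$ neighbors $i$ of $j$ have $d(i,k)=m$, because exactly one neighbor of $j$ lies on the unique $j$--$k$ geodesic. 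The rest of your plan (the support of $R$, solving the recurrence via \cref{lem:recurrence}, freeness from \cref{nongrad}/\cref{grad}) is fine.
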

\begin{rem}\label{rem:moore_cycle}
 From the proof of \cref{compute}, we have explicit basis of  $\MH^{(m+1)i+j}_{2i+j}(G)$ which are tuples $(x_0, \dots, x_{2i+j})$ described as follows:
\begin{enumerate}
    \item $d(x_0, x_1) = 1, d(x_k, x_{k+1}) \in \{1, m\}$ for $1 \leq k \leq 2i+j-1$, and $k$ with $d(x_k, x_{k+1}) = m$ appears exactly $i$ times,
    \item $d(x_{k-1}, x_k) = d(x_k, x_{k+1}) = 1$ implies $x_{k-1} = x_{k+1}$,
    \item $d(x_k, x_{k+1}) = m$ implies $d(x_{k-1}, x_k) = 1 $ and $\neg(x_k \leq x_{k-1}\leq x_{k+1})$,
\end{enumerate}
Namely the cycles are tuples of an arrange of ``back-and-forth'' and ``distance $m$''. Such tuples are classified into two cases by the distance of the last two vertices. The number of tuples of each class is counted as follows  :
\begin{itemize}
    \item {\bf The case $d(x_{2i+j-1}, x_{2i+j}) = 1$} : We have $N$ choices of $x_0$ and $D$ choices of $x_1$ for a fixed $x_0$. In the tuple $(x_1, \dots, x_{2i+j})$,  there are $i$ triples $(x_{k-1}, x_k, x_{k+1})$ such that $d(x_{k-1}, x_{k}) = m, d(x_{k}, x_{k+1}) = 1 $, and all remained consecutive vertices are distance $1$ apart. The choice of distances ($1$ or $m$) of consecutive vertices in such a tuple is ${i+j-1 \choose i}$. Moreover, for such a triple $(x_{k-1}, x_k, x_{k+1})$, we have $(D-1)^m$ choices of $x_k$ for a fixed $x_{k-1}$ since we have $\neg(x_{k-2} \leq x_{k-1}\leq x_{k})$. Also, we have $D$ choices of $x_{k+1}$ for a fixed $x_{k}$. In total, we have $ND\left(D(D-1)^m\right)^i{i+j-1 \choose i}$ tuples which are linearly independent cycles of $\MH^{(m+1)i+j}_{2i+j}(G)$.
    \item {\bf The case $d(x_{2i+j-1}, x_{2i+j}) = m$} : We have $N$ choices of $x_0$ and $D$ choices of $x_1$ for a fixed $x_0$. In the tuple $(x_1, \dots, x_{2i+j-1})$, there are $(i-1)$ triples $(x_{k-1}, x_k, x_{k+1})$ such that $d(x_{k-1}, x_{k}) = m, d(x_{k}, x_{k+1}) = 1 $, and all remained consecutive vertices are distance $1$ apart. Similarly to the above, we have $(D(D-1)^m)^{i-1}{i+j-1 \choose i-1}$ choices of such tuples. Finally, we have $(D-1)^m$ choices of $x_{2i+j}$ for a fixed $x_{2i+j-1}$. In total, we have $N\left(D(D-1)^m\right)^i{i+j-1 \choose i-1}$ tuples which are linearly independent cycles of $\MH^{(m+1)i+j}_{2i+j}(G)$.
\end{itemize}
Note that the total number of the above tuples coincides with \cref{combi}.
\end{rem}

For the proof of \cref{compute}, we use the following lemma which follows immediately from the definition of $\Theta^{\ell}_n$.

\begin{lem}\label{rec}
    Let $X$ be a quasi metric space and $e_{xx'} \in X^2_{f+}$. Then we have
    \[
    \Theta^{\ell}_n(\{e_{xx'}\}) = \{e_{xx'}\}\times \Theta_{n-1}^{\ell - d(x, x')}(\beta\kappa(e_{xx'})) \subset X^{n+1}_{f+}.
    \]\qed

\end{lem}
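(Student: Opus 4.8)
The plan is to prove \cref{rec} by unwinding the definitions of $\Theta_n$ (\cref{dftheta}) together with those of $\kappa$ and $\beta$. First I would note that, by condition~(1) of \cref{dftheta}, every tuple in $\Theta_n(\{e_{xx'}\})$ must begin with the pair $(x_0,x_1)=(x,x')$, so it can be written uniquely as $(x,x',x_2,\dots,x_n)$; the assignment $(x,x',x_2,\dots,x_n)\mapsto\bigl(e_{xx'},(x',x_2,\dots,x_n)\bigr)$ then identifies $\Theta_n(\{e_{xx'}\})$ with $\{e_{xx'}\}\times T$, where $T$ is the set of $n$-tuples $(x',x_2,\dots,x_n)$ arising this way. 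The statement then reduces to checking that $T=\Theta_{n-1}(\beta\kappa(e_{xx'}))$ and to tracking the length grading.

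For indices $2\le i\le n-1$, conditions~(1)--(3) of \cref{dftheta} imposed on $(x,x',x_2,\dots,x_n)$ read verbatim as conditions~(1)--(3) imposed on the shifted tuple $(x',x_2,\dots,x_n)$ at indices $1\le i-1\le n-2$, so these match immediately. The substantive point, and the one place that needs care, is the $i=1$ part: I would show that ``$(x',x_2)\in X^2_f$, $\lnot(x\le x'\le x_2)$, and $x'\le a\le x_2$ with $a\ne x_2$ implies $x\le x'\le a$'' is exactly equivalent to $e_{x'x_2}\in\beta\kappa(e_{xx'})$. For this I would expand the definitions: $\kappa(e_{xx'})=\kappa(e_{xx'},(\{x'\}\times X)\cap X^2_f)$ consists of those $e_{x'w}$ with $d(x',w)<\infty$ and $\lnot(x\le x'\le w)$, while $e_{x'x_2}\notin\iota(\kappa(e_{xx'}),e_{pq})$ for all $e_{pq}\in X^2_{f+}$ says precisely that there is no $a\ne x_2$ with $x'\le a\le x_2$ and $\lnot(x\le x'\le a)$ (the finiteness $d(x',a)<\infty$ being automatic once $x'\le a\le x_2$ and $d(x',x_2)<\infty$). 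Combining these gives the claimed equivalence, hence $T=\Theta_{n-1}(\beta\kappa(e_{xx'}))$.

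Finally, for a tuple $(x,x',x_2,\dots,x_n)$ one has $\sum_{i=0}^{n-1}d(x_i,x_{i+1})=d(x,x')+\sum_{i=1}^{n-1}d(x_i,x_{i+1})$, and the second sum is the length of $(x',x_2,\dots,x_n)$; so the superscript $\ell$ on the left corresponds to $\ell-d(x,x')$ on the right, yielding $\Theta^{\ell}_n(\{e_{xx'}\})=\{e_{xx'}\}\times\Theta^{\ell-d(x,x')}_{n-1}(\beta\kappa(e_{xx'}))$. The inclusion into $X^{n+1}_{f+}$ is then clear since $(x_0,x_1)=(x,x')\in X^2_{f+}$ and the total length is finite. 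There is no genuinely hard step here; the only delicate bookkeeping is translating ``$e_{x'x_2}\in\beta\kappa(e_{xx'})$'' through the definitions of $\beta$ and $\iota$. Alternatively, one could argue by induction on $n$ using \cref{theta1} and $\Theta_1(S)=S$, but the direct comparison above seems the most transparent.
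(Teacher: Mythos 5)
Your proof is correct and amounts to precisely the definition-unwinding that the paper implicitly relies on: the lemma carries only a \(\qed\), indicating it is considered immediate from \cref{dftheta} and the definitions of \(\kappa\), \(\iota\), \(\beta\). Your careful identification of the \(i=1\) conditions of \cref{dftheta} with membership in \(\beta\kappa(e_{xx'})\), together with the observations that the shifted conditions at \(i \ge 2\) match verbatim, that the finiteness \(d(x',a)<\infty\) is automatic from \(x'\le a\le x_2\), and that the length drops by exactly \(d(x,x')\), fills in exactly what the paper leaves to the reader.
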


 In the following, we denote $ \Theta^{\ell}_n(\{e_{xx'}\}) $ simply by $ \Theta^{\ell}_n(e_{xx'}) $. Note also that we have $\Theta_n^\ell(S) = \bigsqcup_{e \in S}\Theta_n^\ell(e)$ for all $S \subset X^2_{f+}$.

\begin{proof}[Proof of \cref{compute}]
Note that the condition $g = 2m+1$, where $g$ is the girth of $G$, implies that $G$ is geodetic.  Hence the freeness of the magnitude homology follows from \cref{nongrad}. From the definition, we have
\[
R(n, \ell) = |\Theta^{\ell}_n(\beta(X^2_+))| = \sum_{d(i, j)  =1}|\Theta^{\ell}_n(e_{ij})|.
\]
Also, we have $|\Theta^{\ell}_n(e_{ij})| = |\Theta^{\ell-1}_{n-1}(\beta\kappa(e_{ij}))|$ by \cref{rec}. Now the set $\kappa(e_{ij})$ with $d(i, j)  =1$ consists of pairs $e_{ji}$ and $e_{jk}$ with $d(j, k) = m$ by $g = 2m+1$. Further, the set $\beta\kappa(e_{ij})$ consists of pairs $e_{ji}$ and $e_{jk}$ with $d(j, k) = m, \neg(j\leq i \leq k)$. Let $T_{ij}$ be the set $\{ k \in V(G)\mid d(j, k) = m, \neg(j\leq i \leq k)\}$. Then, if $d(i, j)  =1$, we have
    \[
    |\Theta_n^\ell(e_{ij})| = |\Theta_{n-1}^{\ell-1}(e_{ji})| + \sum_{k\in T_{ij}}|\Theta_{n-1}^{\ell-1}(e_{jk})|.
    \]
    Also, any pair $e_{kp}$ belongs to the set $\kappa(e_{jk})$ with $k \in T_{ij}$. Further, the set $\beta\kappa(e_{jk})$ consists of pairs $e_{kp}$ with $d(k, p) = 1$. Hence we have
  \begin{align*}
  |\Theta_n^\ell(e_{ij})| &= |\Theta_{n-1}^{\ell-1}(e_{ji})| + \sum_{k\in T_{ij}}|\Theta_{n-1}^{\ell-1}(e_{jk})| \\
  &= |\Theta_{n-1}^{\ell-1}(e_{ji})| + \sum_{k\in T_{ij}}|\Theta_{n-2}^{\ell-m-1}(\beta\kappa(e_{jk}))| \\
   &= |\Theta_{n-1}^{\ell-1}(e_{ji})| + \sum_{k\in T_{ij}}\sum_{d(k, p) = 1}|\Theta_{n-2}^{\ell-m-1}(e_{kp})|,
  \end{align*}
  again by \cref{rec}. Now we  take $\sum_{d(i, j) = 1}$ and obtain
  \[
  R(n, \ell) = R(n-1, \ell-1) + \sum_{d(i, j) = 1}\sum_{k\in T_{ij}}\sum_{d(k, p) = 1}|\Theta_{n-2}^{\ell-m-1}(e_{kp})|.
  \]
  Note that, for all vertex $k \in V(G)$, the number of vertices $j$ satisfying $d(j, k) = m$ is $D(D-1)^{m-1}$. For such $j$, the number of vertices $i$ such that $k \in T_{ij}$ and $d(i, j) = 1$ is exactly $(D-1)$. Hence we conclude that $R(n, \ell) = R(n-1, \ell-1) + D(D-1)^mR(n-2, \ell - m - 1)$. The explicit formula immediately follows from \cref{lem:recurrence}.
\end{proof}


\begin{lem}
  \label{lem:recurrence}
  Let \(a,b,m,\lambda\in\Z\) and assume \(m\neq 1\).
  Then the condition
  \begin{equation*}
    R(n,\ell) =
    \begin{cases}
      a & \text{if \((n,\ell) = (0,0)\),} \\
      b & \text{if \((n,\ell) = (1,1)\),} \\
      0 & \text{if \(n < 0\) or \(\ell < 0\)}, \\
      R(n-1, \ell-1) + \lambda R(n-2, \ell-m-1) & \text{otherwise.}
    \end{cases}
  \end{equation*}
  uniquely determines the map \(R\colon \Z\times\Z\to\Z\).
  Moreover \(R(n,\ell)\) is given by the formula
  \begin{equation*}
    R(n,\ell) =
    \begin{cases}
      \lambda^i\left(a{i+j-1 \choose i-1} + b {i+j-1 \choose i}\right)
      & \text{if \((n,\ell) = (2i+j, (m+1)i+j)\) for some \(i,j\ge 0\)} \\
      0 & \text{otherwise,}
    \end{cases}
  \end{equation*}
  where \(s \choose t\) is the binomial coefficient and
  we define \({-1 \choose -1} = 1\) and \({s \choose t} = 0\) if \(s < t\) or \(t<0\leq s\).
\end{lem}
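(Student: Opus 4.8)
The plan is to proceed in two stages: first establish uniqueness, then verify that the closed formula satisfies the defining recurrence. Uniqueness is a straightforward induction: I would argue that the value of $R(n,\ell)$ is forced for every $(n,\ell)$ by induction on, say, $n+\ell$ (or simply on $n$, handling $n<0$ and $\ell<0$ by the stipulated zero values, and $(0,0)$, $(1,1)$ by the initial conditions). Every other pair $(n,\ell)$ with $n,\ell\ge 0$ that is not $(0,0)$ or $(1,1)$ has its value expressed via the recurrence in terms of $R(n-1,\ell-1)$ and $R(n-2,\ell-m-1)$, both of which have strictly smaller $n$ (and are covered by the inductive hypothesis or the boundary cases), so the map is determined. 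This requires a small sanity check that $(1,1)$ really is the only ``extra'' initial datum needed — i.e. that $(n,\ell)=(1,1)$ is not reachable from the recurrence with nonnegative indices, which holds because $R(0,0) + \lambda R(-1, -m)$ would need $R(-1,-m)$, already zero, giving $R(0,0)=a$ not conflicting, while applying the recurrence \emph{at} $(1,1)$ would give $R(0,0)+\lambda R(-1,-m)=a$, so one must check $a$ and $b$ are not forced to agree; the cleanest route is just to \emph{declare} $(0,0)$ and $(1,1)$ as base cases and note all other nonnegative pairs are handled by the recurrence, which is visibly consistent.

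For the formula, I would verify directly that the stated closed form obeys the recurrence and the initial/boundary conditions. The boundary part is immediate: if $n<0$ or $\ell<0$ then $(n,\ell)$ cannot be written as $(2i+j,(m+1)i+j)$ with $i,j\ge 0$ (since that forces $n,\ell\ge 0$), so the formula gives $0$. For $(0,0)$ we need $i=j=0$, giving $\lambda^0\bigl(a\binom{-1}{-1}+b\binom{-1}{0}\bigr)=a$ using the convention $\binom{-1}{-1}=1$ and $\binom{-1}{0}=0$; for $(1,1)$ we need $2i+j=1$, $(m+1)i+j=1$, forcing $i=0,j=1$ (here $m\ne 1$ is what rules out $i=1,j=-1$ and any other solution), giving $\lambda^0\bigl(a\binom{0}{-1}+b\binom{0}{0}\bigr)=b$. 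The main content is the recurrence step: fix $(n,\ell)$ not among the base cases with $n,\ell\ge 0$. Two sub-cases arise. If $(n,\ell)$ is \emph{not} of the form $(2i+j,(m+1)i+j)$, I must check that neither $(n-1,\ell-1)$ nor $(n-2,\ell-m-1)$ is of that form either (so the recurrence reads $0=0$); this is a little parity/linear-algebra bookkeeping: $(n-1,\ell-1)=(2i+j,(m+1)i+j)$ would give $(n,\ell)=(2i+(j+1),(m+1)i+(j+1))$, of the excluded form, and $(n-2,\ell-m-1)=(2i+j,(m+1)i+j)$ would give $(n,\ell)=(2(i+1)+j,(m+1)(i+1)+j)$, again of the excluded form. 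If instead $(n,\ell)=(2i+j,(m+1)i+j)$, then $(n-1,\ell-1)=(2i+(j-1),(m+1)i+(j-1))$ and $(n-2,\ell-m-1)=(2(i-1)+j,(m+1)(i-1)+j)$, and the recurrence becomes the Pascal-type identity
\begin{equation*}
\lambda^i\!\left(a\binom{i+j-1}{i-1}+b\binom{i+j-1}{i}\right)
= \lambda^i\!\left(a\binom{i+j-2}{i-1}+b\binom{i+j-2}{i}\right)
+ \lambda^i\!\left(a\binom{i+j-2}{i-2}+b\binom{i+j-2}{i-1}\right),
\end{equation*}
which is just $\binom{i+j-1}{i-1}=\binom{i+j-2}{i-1}+\binom{i+j-2}{i-2}$ and $\binom{i+j-1}{i}=\binom{i+j-2}{i}+\binom{i+j-2}{i-1}$ applied termwise (note the $\lambda^{i-1}$ from the $(n-2)$-term is promoted to $\lambda^i$ precisely by the coefficient $\lambda$ in the recurrence).

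The step I expect to be the main obstacle — really the only place that needs care rather than routine computation — is the careful handling of the degenerate ranges of the binomial coefficients near the boundary, i.e. making sure the extended conventions $\binom{-1}{-1}=1$ and $\binom{s}{t}=0$ for $s<t$ or $t<0\le s$ keep Pascal's rule valid in the corner cases (e.g. $i=0$ or $j=0$, where indices like $\binom{i+j-2}{i-2}$ or $\binom{i+j-1}{i-1}$ with a negative lower or upper entry appear). One must check that Pascal's identity $\binom{p}{q}=\binom{p-1}{q}+\binom{p-1}{q-1}$ survives with these conventions for all the $(p,q)$ that actually occur; this is a short finite verification by cases on whether $i$ or $j$ vanishes, and I would dispatch it with a brief case analysis rather than invoking a general lemma. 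Everything else is bookkeeping with the linear substitutions $(n,\ell)\mapsto(n-1,\ell-1)$ and $(n,\ell)\mapsto(n-2,\ell-m-1)$, which is where the hypothesis $m\ne 1$ gets used (it guarantees the map $(i,j)\mapsto(2i+j,(m+1)i+j)$ is injective on $\Z^2$, so the representation $(n,\ell)=(2i+j,(m+1)i+j)$ is unique when it exists).
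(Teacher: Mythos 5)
Your argument is correct and the strategy is sound, but it is a genuinely different route from the paper's. The paper's proof first passes to the rescaled quantity $R'(i,j) := \lambda^{-i}R(2i+j,(m+1)i+j)$, which converts the weighted recurrence into the plain Pascal recurrence $R'(i,j) = R'(i,j-1) + R'(i-1,j)$; it then observes that $R'$ counts weighted lattice paths from the origin to $(i,j)$ (weight $a$ through $(1,0)$, weight $b$ through $(0,1)$), from which the closed form $R'(i,j) = a\binom{i+j-1}{i-1} + b\binom{i+j-1}{i}$ is read off directly. You instead take the closed formula as given and verify by induction that it satisfies every clause of the defining recurrence, reducing the recursive step to two termwise applications of Pascal's identity (after absorbing the coefficient $\lambda$ on the $(n-2)$-term into the prefactor $\lambda^{i}$, exactly as you note). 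Both routes rest on the same pivot --- the injectivity of $(i,j)\mapsto(2i+j,(m+1)i+j)$ on $\Z^2$, guaranteed by $m\neq 1$, which ensures a unique representation (or none) of each $(n,\ell)$ and hence makes the case analysis well defined. What the paper's approach buys is a derivation rather than a verification: the lattice-path picture explains where the formula comes from and absorbs the boundary conventions into a combinatorial statement almost for free. What your approach buys is self-containedness and no need to introduce the path-counting interpretation, at the cost of the corner-case bookkeeping you rightly flag (checking Pascal's identity with the extended conventions when $i=0$, $j=0$, or $i+j\le 1$). You correctly observe that is where the only real work lies; since you've excluded the base pairs $(i,j)=(0,0)$ and $(0,1)$ before invoking Pascal, the problematic entries like $\binom{-2}{-2}$ never occur and the finitely many remaining corner cases all check out. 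One small addition worth making explicit: in your "not of that form" sub-case, the back-substitution produces indices $(i,j+1)$ respectively $(i+1,j)$ which remain nonnegative, so no hidden sign issues arise there.
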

\begin{proof}
 Note that $R(n, \ell)$ is non-zero only if $(n, \ell)$ can be written as $j(1, 1) + i(2, m+1) = (2i+j, (m+1)i+j)$ for some $i, j \geq 0$. For \(i,j\in\Z\), we denote \(R'(i,j) := \lambda^{-i}R(2i+j, (m+1)i+j)\).
  By the definition of \(R(n,\ell)\), we have
  \(R'(0,0) = a\), \(R'(0,1) = b\) and
  \(R'(i,j) = R'(i,j-1) + R'(i-1,j)\) for \(i,j\ge 0\) with \((i,j) \neq (0,0), (0,1)\).
  Since the map \(\Z^2\to\Z^2;\ (i,j) \mapsto (2i+j, (m+1)i+j)\)
  is injective by the assumption \(m\neq 1\),
  we have \(R'(i,j) = 0\) when \(i<0\) or \(j<0\). Hence $R'(i, j)$ is the number of shortest paths from $(0, 0)$ to $(i, j)$ counted with weights; a path going through $(1,0)$ has weight $R'(1, 0) = a$ and a path going through $(0, 1)$ has weight $R'(0, 1) = b$.
  Namely we have \(R'(i,j) = a{(i-1)+j \choose i-1} + b{i+(j-1) \choose i}\),
  which completes the proof.
\end{proof}

Applying \cref{compute}, we give concrete computations for Moore graphs.
Note that explicit cycles can be obtained by \cref{rem:moore_cycle}.

\begin{eg}\label{egodd}
    Let $C_{2m+1}$ be the cycle graph with $2m+1$ vertices. Its degree and diameter are $2$ and $m$ respectively. The magnitude homology $\MH^\ell_n(C_{2m+1})$ is a free $\Z$-module of rank $R(n, \ell)$ determined by the recurrence formula
    \[
    R(n, \ell) = R(n-1, \ell-1) + 2R(n-2, \ell - m-1),
    \]
    with the initial condition $R(0, 0) = 2m+1, R(1, 1) = 4m+2$. We have $\MH^\ell_n(C_{2m+1}) = 0$ except for
   \[
   \rank \MH^{(m+1)i+j}_{2i+j}(C_{2m+1}) = (2m+1)2^i\left({i+j-1 \choose i-1} + 2{i+j-1 \choose i}\right),
   \]
with $i, j\geq 0$.
    \qed
\end{eg}

\begin{eg}\label{egpeter}
    Let $P$ be the Petersen graph (\cref{P}, \cite[A.3.1]{HW}). It  has $10$ vertices, and its degree and diameter are $3$ and $2$ respectively. The magnitude homology $\MH^\ell_n(P)$ is a free $\Z$-module of rank $R(n, \ell)$ determined by the recurrence formula
    \[
    R(n, \ell) = R(n-1, \ell-1) + 12R(n-2, \ell - 3),
    \]
    with the initial condition $R(0, 0) = 10, R(1, 1) = 30$.  We have $\MH^\ell_n(P) = 0$ except for
   \[
   \rank \MH^{3i+j}_{2i+j}(P) = 10\cdot12^i\left({i+j-1 \choose i-1} + 3{i+j-1 \choose i}\right),
   \]
with $i, j\geq 0$.
     \begin{figure}[H]
\centering

\begin{tikzpicture}

    \foreach \x in {0,...,4}
    \filldraw[fill=black, draw=black] ({3*cos((\x+1/4)*2*pi/5 r)}, {3*sin((\x+1/4)*2*pi/5 r)}) circle (3pt) node[left] {} ;

    \foreach \x in {0,...,4}
    \foreach \y in {0, ..., 4}
    \filldraw[fill=black, draw=black] ({1.5*cos((\x+1/4)*2*pi/5 r)}, {1.5*sin((\x+1/4)*2*pi/5 r)}) circle (3pt) node[left] {} ;

\foreach \j in {0, ..., 4}
    \draw[thick]  ({3*cos((\j+1/4)*2*pi/5 r)}, {3*sin((\j+1/4)*2*pi/5 r)})--({1.5*cos((\j+1/4)*2*pi/5 r)}, {1.5*sin((\j+1/4)*2*pi/5 r)})  ;

    \foreach \j in {0, ..., 4}
    \draw[thick]   ({3*cos((\j+1/4)*2*pi/5 r)}, {3*sin((\j+1/4)*2*pi/5 r)})--({3*cos((\j+1+1/4)*2*pi/5 r)}, {3*sin((\j+1+1/4)*2*pi/5 r)})  ;

    \foreach \j in {0, ..., 4}
    \draw[thick]  ({1.5*cos((\j+1/4)*2*pi/5 r)}, {1.5*sin((\j+1/4)*2*pi/5 r)})--({1.5*cos((\j+2+1/4)*2*pi/5 r)}, {1.5*sin((\j+2+1/4)*2*pi/5 r)})  ;

\end{tikzpicture}
\caption{Petersen graph}
      \label{P}
\end{figure}
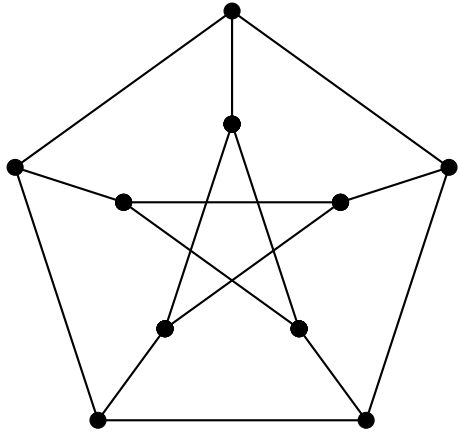\qed

\end{eg}

\begin{eg}\label{eghoffman}
    Let $HS$ be the Hoffman-Singleton graph (\cref{HS}). It has $50$ vertices and its degree and diameter are $7$ and $2$ respectively. The magnitude homology $\MH^\ell_n(HS)$ is a free $\Z$-module of rank $R(n, \ell)$ determined by the recurrence formula
    \[
    R(n, \ell) = R(n-1, \ell-1) + 252R(n-2, \ell - 3),
    \]
    with the initial condition $R(0, 0) = 50, R(1, 1) = 350$. We have $\MH^\ell_n(HS) = 0$ except for
   \[
   \rank \MH^{3i+j}_{2i+j}(HS) = 50\cdot252^i\left({i+j-1 \choose i-1} + 7{i+j-1 \choose i}\right),
   \]
with $i, j\geq 0$.

    \begin{figure}[H]
    \centering
     \begin{tikzpicture}

    \foreach \x in {0,...,4}
    \foreach \y in {0, ..., 4}
    \filldraw[fill=black, draw=black] ({4*cos((\x+\y/5)*2*pi/5 r)}, {4*sin((\x+\y/5)*2*pi/5 r)}) circle (3pt) node[left] {} ;

    \foreach \x in {0,...,4}
    \foreach \y in {0, ..., 4}
    \filldraw[fill=black, draw=black] ({2*cos((\x+\y/5)*2*pi/5 r)}, {2*sin((\x+\y/5)*2*pi/5 r)}) circle (3pt) node[left] {} ;

\foreach \j in {0, ..., 4}
    \foreach \h in {0,...,4}
    \foreach \i in {0, ..., 4}
    \draw  ({4*cos((\j-\h/5)*2*pi/5 r)}, {4*sin((\j-\h/5)*2*pi/5 r)})--({2*cos((\h*\i+\j+\i/5)*2*pi/5 r)}, {2*sin((\h*\i+\j+\i/5)*2*pi/5 r)})  ;

    \foreach \j in {0, ..., 4}
    \foreach \h in {0,...,4}
    \draw  ({4*cos((\j+\h/5)*2*pi/5 r)}, {4*sin((\j+\h/5)*2*pi/5 r)})--({4*cos((\j+1+\h/5)*2*pi/5 r)}, {4*sin((\j+1+\h/5)*2*pi/5 r)})  ;

    \foreach \j in {0, ..., 4}
    \foreach \h in {0,...,4}
    \draw ({2*cos((\j+\h/5)*2*pi/5 r)}, {2*sin((\j+\h/5)*2*pi/5 r)})--({2*cos((\j+2+\h/5)*2*pi/5 r)}, {2*sin((\j+2+\h/5)*2*pi/5 r)})  ;
\end{tikzpicture}
 \caption{Hoffman-Singleton graph}
      \label{HS}
\end{figure}
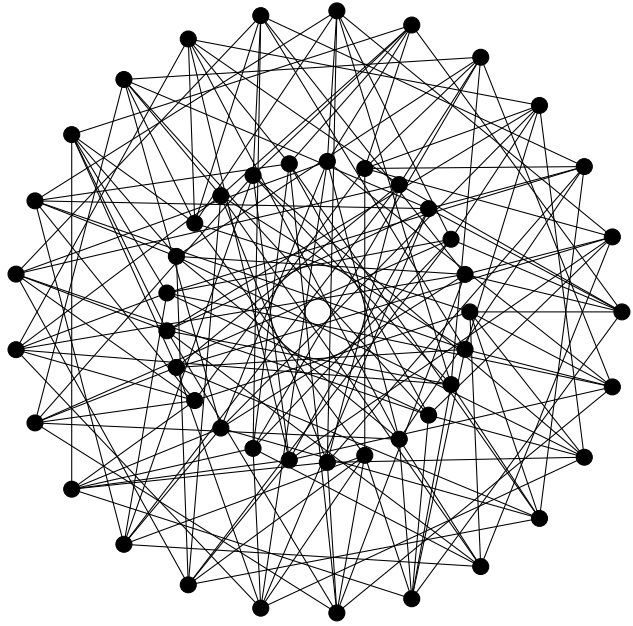
\qed
\end{eg}

\begin{eg}\label{egmoore}
    Suppose that we have a Moore graph $X$ with $D = 57$ (a missing Moore graph). It is known that if such a graph exists, its diameter is $2$ and it has $3250$ vertices. The magnitude homology $\MH^\ell_n(X)$ is a free $\Z$-module of rank $R(n, \ell)$ determined by the recurrence formula
    \[
    R(n, \ell) = R(n-1, \ell-1) + 178752R(n-2, \ell - 3),
    \]
    with the initial condition $R(0, 0) = 3250, R(1, 1) = 185250$. We have $\MH^\ell_n(X) = 0$ except for
     \[\rank \MH^{3i+j}_{2i+j}(X) = 3250\cdot178752^i\left({i+j-1 \choose i-1} + 57{i+j-1 \choose i}\right),
     \]
with $i, j\geq 0$.

    \qed
\end{eg}

\begin{rem}
    Since the odd cycle graphs, Petersen graph, and Hoffman-Singleton graph are vertex-transitive (even distance-transitive), we can compute their magnitude as
    \[
    {\rm Mag}(G) = |V(G)|/\sum_{y\in V(G)}q^{d(x, y)}
    \]
    for all $x \in V(G)$ by Speyer's lemma (\cite{L}). Although it is known that a missing Moore graph is not vertex transitive if it exists (\cite{Da}), we have a similar formula. More generally, we have the following formula for distance regular graphs containing Moore graphs. Here, a graph is {\it distance regular} if, for all $\ell \geq 0$, each two vertices $x, y$ have the same number of vertices  that is distance $\ell$ apart from $x$ and $y$ respectively.
    \begin{prop}
        For all finite distance regular graph $G$, we have
        \[
    {\rm Mag}(G) =  |V(G)|/\sum_{y\in V(G)}q^{d(x, y)}
    \] for all $x \in V(G)$.
    \end{prop}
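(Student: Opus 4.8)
The plan is to write down an explicit weighting on $G$ and invoke the standard fact that the magnitude of a finite metric space with symmetric distance equals the sum of the entries of any weighting. Recall that, for a parameter $q$ (a real number in $(0,1)$ via $q = e^{-t}$, or a formal variable over $\Q$), a \emph{weighting} on a finite metric space $(A, d)$ is a function $w \colon A \to \R$ with $\sum_{b \in A} q^{d(a,b)} w(b) = 1$ for all $a \in A$, and then ${\rm Mag}(A) := \sum_{a \in A} w(a)$. When $d$ is symmetric this value does not depend on the choice of $w$: if $w$ and $w'$ are weightings then $\sum_a w'(a) = \sum_a \bigl( \sum_b q^{d(a,b)} w(b) \bigr) w'(a) = \sum_b w(b) \bigl( \sum_a q^{d(b,a)} w'(a) \bigr) = \sum_b w(b)$.

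The key observation I would use is that distance-regularity forces $c := \sum_{y \in V(G)} q^{d(x,y)}$ to be independent of $x$. Writing $k_\ell := |\{\, y \in V(G) \mid d(x,y) = \ell \,\}|$, the definition of distance-regularity adopted here says precisely that each $k_\ell$ is the same for all vertices $x$, so $c = \sum_{\ell \ge 0} k_\ell q^\ell$ does not depend on $x$. Moreover $c \ne 0$: over $\R$ with $q \in (0,1)$ it is a sum of positive terms (the summand $k_0 q^0 = 1$ alone witnesses this), and as a polynomial in the formal variable $q$ it has constant term $1$, hence is a unit in the relevant coefficient ring.

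With this in place the remaining step is immediate: the constant function $w \equiv 1/c$ on $V(G)$ is a weighting, since $\sum_{y} q^{d(x,y)} \cdot (1/c) = c/c = 1$ for every $x$. Therefore ${\rm Mag}(G) = \sum_{x \in V(G)} (1/c) = |V(G)|/c = |V(G)| \big/ \sum_{y \in V(G)} q^{d(x,y)}$ for any $x \in V(G)$, which is the asserted formula.

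I do not expect a genuine obstacle here. The only points requiring care are pinning down the coefficient ring in which $q$ is taken and checking that $c$ is invertible there, and recalling Leinster's observation that the existence of a weighting makes the magnitude well-defined and computes it — which, in the symmetric case, is exactly the short identity in the first paragraph. It is worth noting that the argument uses only the constancy of the distance-$\ell$ spheres about each vertex, a consequence of distance-regularity that is weaker than vertex-transitivity; this is why the proposition is the natural common generalisation of Speyer's lemma, which handles the vertex-transitive case.
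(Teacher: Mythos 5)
Your proof is correct and rests on the same key observation as the paper — distance-regularity, in the sense adopted here (constancy of the distance-$\ell$ sphere sizes), forces $c = \sum_{y}q^{d(x,y)}$ to be independent of $x$ — but the execution differs. The paper assumes a magnitude weighting $w$ is given and simply sums the defining equations $\sum_{i} q^{d(i,j)} w_i = 1$ over all $j$; on the right-hand side this gives $N$, while on the left-hand side rearranging the double sum and using constancy of the coefficient $\sum_{\ell} D_\ell q^\ell$ of each $w_i$ yields $c\sum_i w_i$, hence $\sum_i w_i = N/c$. You instead exhibit the constant function $1/c$ explicitly and verify directly that it is a weighting, then conclude by the standard well-definedness of magnitude via symmetric weightings. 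Your route has the modest virtue of actually producing a weighting (and hence proving one exists, rather than assuming it), which is slightly more self-contained; the paper's route more quickly reuses the defining equations without constructing anything. Both arguments are short, correct, and rest on the same counting fact, so this is a different packaging of essentially the same idea rather than a genuinely different proof.
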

    \begin{proof}
        Let $D_\ell$ be the number of vertices that is distance $\ell$ apart from $x \in V(G)$. Since $G$ is distance regular, $D_\ell$ does not depend on the choice of $x$. Also let $N$ be the number of vertices of $G$, and we fix a numbering $1, \dots, N$ on the vertices. Suppose that we have a magnitude weightings $w_1, \dots, w_N$, where $w_i$ is a weighting on the vertex $i$. Then, for all $j \in V(G)$, we have
        \[
        \sum_{\ell=0}^kq^{\ell}\sum_{d(j, i) = \ell}w_i = 1,
        \]
        where $k$ denotes the diameter of $G$. By taking $\sum_{j \in V(G)}$, we obtain
        \begin{align*}
            N = \sum_{j \in V(G)}1 &= \sum_{j \in V(G)}\sum_{\ell=0}^kq^{\ell}\sum_{d(j, i) = \ell}w_i \\
            &= \sum_{i=1}^N\sum_{\ell=0}^kD_\ell q^\ell w_i.
        \end{align*}
        Hence we obtain that
        \[
        \sum_{i=1}^{N}w_i = \frac{N}{\sum_{\ell=0}^kD_\ell q^\ell},
        \]
        which shows the statement.
    \end{proof}
\end{rem}

Hence the magnitude of a missing Moore graph is $3250/(1 + 57q + 3192q^2)$.

\section{Magnitude homology of even cycle graphs}
\newcommand{\possum}{a}
\newcommand{\negsum}{b}
\newcommand{\la}{\lmult{\possum}}
\newcommand{\lb}{\lmult{\negsum}}
As well as the odd case, the magnitude homology of even cycle graphs has been already studied in \cite{HW} and  \cite{Gu}. Here we give an alternative and more detailed computation, namely not only we determine the rank of the magnitude homology but also give explicit cycles of them.  Let \(N = 2m \ge 6\) be a positive even integer.
Since the graph \(C_N\) is not geodetic in this case,
we need to directly construct a (minimal) projective resolution of \(\Z^N\) over \(\mr{C_N}\).

\subsection{Statement}\label{statement : even}
In the following, we denote by $\Z^2$ the lattice in $\R^2$ spanned by $(1, 0)$ and $(0, 1)$. We also consider it as a graph with vertices $\{(p, q) \in \Z^2\}$ and edges $\{\{(p, q), (p', q')\} \mid |p-p'|+|q-q'| = 1\}$. For $p, q \geq 0$, let $S(p, q)$ be the set of $(p, q)$-shuffles, namely the set of all shortest paths from $(0, 0)$ to $(p, q)$ in $\Z^2$. For each vertex $x \in C_N$ and each $s \in S(p, q)$, we define a tuple $\varphi(x; s) \in (C_N)^{p+q+1}$ below (\cref{phidef}). Now we define a chain $\theta_{pq}(x) \in \MC_n(C_N)$ for each $p, q \geq0$ and $x \in C_N$ by
\[
\theta_{pq}(x) = \sum_{s\in S(p, q)}(-1)^{\nu(s)}\varphi(x; s),
\]
where the signature $(-1)^{\nu(s)}$ is defined below (\cref{nudef}).
\begin{prop}\label{thm:even}
  The magnitude homology \(\MH_{n}(C_N)\) is a free \(\Z\)-module
  whose basis is given by the set \(\{\theta_{pq}(x)\}_{\substack{x\in C_N\\ \ p+q=n}}\) of cycles in \({\MC}_n(C_N)\).
\end{prop}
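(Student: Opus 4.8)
The plan is to deduce \cref{thm:even} from the minimal projective resolution of the trivial module over $\mr{C_N}$ constructed in \cref{thm:resol_C2m}, following the strategy already used in the proof of \cref{nongrad}. Write $P_\ast \to \Z^N$ for that resolution. Being a projective resolution of the same module as the normalized bar resolution of Section~\ref{mhdf} (cf.\ \cref{reduced} and \cref{rem:reducedMC}), it is chain homotopy equivalent to the latter over $\mr{C_N}$ by the fundamental theorem of homological algebra; fix a comparison $\mr{C_N}$-chain map $F_\ast$ lifting $\id_{\Z^N}$. Since $\MH_n(C_N) = \Tor_n^{\mr{C_N}}(\Z^N,\Z^N)$ is the homology of $P_\ast \otimes_{\mr{C_N}} \Z^N$, and minimality of $P_\ast$ forces every differential of this complex to vanish by \cref{zerodiff}, we get $\MH_n(C_N) \cong P_n \otimes_{\mr{C_N}} \Z^N$, a free $\Z$-module whose standard generators are, by the description of $P_n$ in \cref{thm:resol_C2m}, indexed by $\{(x,p,q) : x \in C_N,\ p,q\ge 0,\ p+q = n\}$, matching the size of the proposed basis.

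It remains to identify these generators, through $F_\ast$, with the chains $\theta_{pq}(x)$. Applying $-\otimes_{\mr{C_N}}\Z^N$ to $F_\ast$ yields a chain homotopy equivalence $\bar F_\ast \colon P_\ast \otimes_{\mr{C_N}}\Z^N \to \MC_\ast(C_N)$; since the source has zero differential, $\bar F_n$ induces an isomorphism $P_n \otimes_{\mr{C_N}}\Z^N \xrightarrow{\sim} \MH_n(C_N)$, so the homology classes of the images $\bar F_n(g)$ of the standard generators $g$ of $P_n\otimes_{\mr{C_N}}\Z^N$ form a $\Z$-basis of $\MH_n(C_N)$. Each such image is automatically a cycle in $\MC_n(C_N)$, since $\partial\,\bar F_n(g) = \bar F_{n-1}(\partial g) = 0$. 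Everything then reduces to the assertion — which the construction in \cref{thm:resol_C2m} is built to supply — that $F_\ast$ can be chosen so that $\bar F_n$ sends the generator indexed by $(x;p,q)$ to $\theta_{pq}(x) = \sum_{s \in S(p,q)}(-1)^{\nu(s)}\varphi(x;s)$; granting this, $\{\theta_{pq}(x)\}_{x\in C_N,\ p+q=n}$ is a $\Z$-basis of $\MH_n(C_N)$, which is the claim.

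Accordingly, the crux of the argument is the explicit comparison map. Since $C_N$ is not geodetic, the minimal resolution no longer embeds into the bar resolution as a subcomplex of single tuples (contrast \cref{thetap}): each generator $(x;p,q)$ must instead be resolved by a genuine linear combination, obtained by interleaving the two geodesic directions emanating from $x$ in $C_N$, spread over all $(p,q)$-shuffles $s\in S(p,q)$ with the Koszul-type signs $(-1)^{\nu(s)}$. Verifying that this assignment defines an $\mr{C_N}$-chain map amounts to matching the explicit differential of $P_\ast$ furnished by \cref{thm:resol_C2m} against the shuffle (Eilenberg--Zilber) identities, so that after $\otimes_{\mr{C_N}}\Z^N$ it lands exactly on the $\partial$-cycles $\theta_{pq}(x)$. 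This sign-and-shuffle bookkeeping — together with the verification that the constructed complex is exact and minimal — is the main obstacle, and is where the real work of the even-cycle case is concentrated.
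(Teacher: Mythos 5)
Your outline is essentially the paper's own proof of \cref{thm:even}: start from the resolution of \cref{thm:resol_C2m} (where $\partial\otimes\id = 0$ on $\tot_\ast D\otimes_{\mr{C_N}}\Z^N$ is supplied directly, equivalently via \cref{zerodiff} and minimality), compare it to the bar resolution $\MResol_\ast(C_N)$ of \cref{mhdf} by a chain map, use \cref{prop:MR_equiv} to upgrade this to a homotopy equivalence, and read off a basis of $\MH_n(C_N)$ as the images of the generators $v_{pq}\otimes e_x$. The ``sign-and-shuffle bookkeeping'' you correctly flag as the crux is exactly what the paper does: it fixes the comparison map to be $f(v_{pq}) = \sum_{x\in C_N}\sum_{s\in S(p,q)}(-1)^{\nu(s)}\widetilde\varphi(x,s)$ and verifies the chain-map identity in \cref{lem:even_diff} and \cref{dffd} by decomposing each face $\partial_{n,i}\widetilde\varphi(x,s)$ and pairing each shuffle $s$ against its reflection $s'$ (with the sign $(-1)^{\nu}$ flipping) so that the interior faces cancel and the boundary faces reproduce the horizontal and vertical differentials of $D$.
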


By unpacking the explicit definitions of $\varphi$ and $\nu$ given below, \cref{thm:even} is refined as follows (c.f.\ \cref{gradrem}).

\begin{thm}\label{thm:evenref}
  \begin{enumerate}
    \item \(\MH_n^l(C_N) \neq 0\) if and only if \((n,l) = (2i+j, mi+j)\) for some \(i,j\in\N\).
    \item For each \(i\ge 0\), the magnitude homology \(\MH_{2i}^{mi}(C_N)\) is a free \(\Z\)-module of rank \(N\).
      Its basis is given by the set \(\{\theta_{ii}(x)\}_{x\in C_N}\) of cycles in \({\MC}_{2i}^{mi}(C_N)\).
    \item For each \(i\ge 0\) and \(j>0\),
      the magnitude homology \(\MH_{2i+j}^{mi+j}(C_N)\) is a free \(\Z\)-module of rank \(2N\).
      Its basis is given by the set \(\{\theta_{i+j,i}(x), \theta_{i,i+j}(x)\}_{x\in C_N}\) of cycles in \({\MC}_{2i+j}^{mi+j}(C_N)\).
  \end{enumerate}
\end{thm}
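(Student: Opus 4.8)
The plan is to obtain the graded statement as a bookkeeping consequence of the ungraded \cref{thm:even}, the one substantive ingredient being the internal degree of the cycles $\theta_{pq}(x)$. So the first step is to unwind the definitions of $\varphi$ (\cref{phidef}) and $\nu$ (\cref{nudef}) just far enough to see that $\theta_{pq}(x)$ is \emph{homogeneous}: every tuple $\varphi(x;s)$ with $s\in S(p,q)$ has one and the same length $\ell(p,q):=\sum_i d\bigl(\varphi(x;s)_i,\varphi(x;s)_{i+1}\bigr)$, so that $\theta_{pq}(x)=\sum_{s}(-1)^{\nu(s)}\varphi(x;s)$ lies in the single summand $\MC_{p+q}^{\ell(p,q)}(C_N)$. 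The second step, which is essentially the same computation, is to evaluate $\ell(p,q)$; the claim to check is
\[
\ell(p,q)=m\min(p,q)+|p-q|,
\]
i.e. each shuffle step in the ``minority'' direction contributes $m$ to the length while each step in the ``majority'' direction contributes $1$, the homological degree being $p+q$ in all cases. I expect this length identity to be the main obstacle: it is the point where the geometry of $C_{2m}$ (the two length-$m$ geodesics between antipodal vertices) interacts with the combinatorics of $(p,q)$-shuffles, and one must verify that the answer really is independent of $s$. The signs $\nu(s)$ play no role in the internal degree; they enter only through \cref{thm:even}, where they make $\theta_{pq}(x)$ a cycle and the whole family a basis.

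Granting homogeneity, the graded refinement is formal. From $\MC_\ast(C_N)=\bigoplus_\ell \MC_\ast^\ell(C_N)$ we get $\MH_n(C_N)=\bigoplus_\ell \MH_n^\ell(C_N)$, and each homogeneous cycle $\theta_{pq}(x)$ of length $\ell(p,q)$ represents a class in $\MH_{p+q}^{\ell(p,q)}(C_N)$. Since \cref{thm:even} exhibits $\{\theta_{pq}(x): x\in C_N,\ p+q=n\}$ as a $\Z$-basis of $\MH_n(C_N)$ and each basis element is concentrated in a single internal degree, partitioning the basis by internal degree shows that $\MH_n^\ell(C_N)$ is free with basis $\{\theta_{pq}(x): x\in C_N,\ p+q=n,\ \ell(p,q)=\ell\}$; in particular $\MH_n^\ell(C_N)\neq 0$ exactly when this set is non-empty.

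It remains to read off which pairs $(p,q)$ contribute to a prescribed $(n,\ell)$. Setting $i=\min(p,q)$ and $j=|p-q|$ turns $(p+q,\ \ell(p,q))$ into $(2i+j,\ mi+j)$, and conversely every $(i,j)\in\N^2$ is realized, e.g. by $(p,q)=(i+j,i)$; moreover the assignment $(i,j)\mapsto(2i+j,mi+j)$ is injective because $m\ge 3\neq 2$ (the same injectivity invoked in \cref{lem:recurrence}). Hence for fixed $(n,\ell)=(2i+j,mi+j)$ the pairs $(p,q)$ with $p+q=n$ and $\ell(p,q)=\ell$ are exactly $\{(i,i)\}$ when $j=0$ and $\{(i+j,i),(i,i+j)\}$ when $j>0$. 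Together with the previous paragraph and $|C_N|=N$, this yields statement (1), the rank $N$ and basis $\{\theta_{ii}(x)\}_{x\in C_N}$ of statement (2), and the rank $2N$ and basis $\{\theta_{i+j,i}(x),\theta_{i,i+j}(x)\}_{x\in C_N}$ of statement (3).
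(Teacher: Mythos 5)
Your proposal matches the paper's approach exactly: \cref{thm:evenref} is obtained from \cref{thm:even} by checking that each $\theta_{pq}(x)$ is homogeneous, computing its internal degree, and re-indexing by $(i,j)=(\min(p,q),\,|p-q|)$. The length formula $\ell(p,q)=m\min(p,q)+|p-q|$ that you flag as the crux is indeed correct (the steps with $|\lambda|=m-1$ are exactly those where $|p_i-q_i|$ decreases, and a ballot-type count shows there are $\min(p,q)$ of them in every shuffle), and the paper itself treats this as the "unpacking of the explicit definitions" rather than spelling it out.
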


\subsection{Definition of $\varphi$ and $\nu$}
 Let $\xi : \R \too \R$ be a map defined by
\[
\xi(t) = \begin{cases}1 & t\geq 0 \\ 0 & t < 0\end{cases}.
\]

\begin{df}
 Let  $(p, q) \in \Z^2$ and $N=2m$. We define
    \begin{align*}
    \lambda((1, 0), (p, q)) &= (-1)^{p+q+1}((m-2)\xi(q-p)+1), \\
    \mu^{((1, 0),(p, q))} &= (-1)^{(p+q+1)\xi(q-p)}.
    \end{align*}
    We also define $\lambda((0, 1), (p, q)) = -\lambda((1, 0),(q, p))$ and $\mu^{((0, 1),(p, q))} = (-1)^{(p+q)\xi(p-q)}$.
    Namely we have maps
 \begin{align*}
     \lambda&: \{(1,0),(0,1)\}\times\Z^2 \to \{\pm 1, \pm (m-1)\}, \\
     \mu&: \{(1,0),(0,1)\}\times\Z^2 \to \{\pm 1\}.
 \end{align*}
\end{df}
Note that, for each $v\in\{(1,0),(0,1)\}$, we have $\mu^{(v,(p, q))} = -1$
if and only if $ \lambda(v, (p, q)) = -(m-1)$.
Let $\gamma$ be the rotation of $C_N$, namely an automorphism of $C_N$ that sends one vertex to the adjacent one (we don't care about the orientation of the rotation).
We denote a $(p, q)$-shuffle $s \in S(p, q)$ as a shortest path $s = (s_0, \dots, s_{p+q})$,
where $s_i \in \Z^2, s_0 = (0, 0)$ and $s_{p+q} = (p, q)$.
\begin{df}\label{phidef}
For $x \in C_N$ and $s = (s_0, \dots, s_{p+q}) \in S(p, q)$, we define a tuple $\varphi(x,s) = (x_0,\ldots,x_{p+q}) \in (C_N)^{p+q+1}$ by $ x_0 = x$ and
\[
x_i = \gamma^{\lambda(s_i-s_{i-1} , s_i)}x_{i-1},
\]
for $1\le i\le p+q$.
\end{df}
\begin{df}\label{nudef}
Let  $p, q \geq 0$ and $n := p+q$. For $s = (s_0,\ldots,s_{p+q}) \in S(p, q)$, we define
\[
\nu(s) = n(n+1)/2 + |\{i \mid \lambda(s_i-s_{i-1},s_i) = -(m-1)\}|.
\]
\end{df}

The following description helps us to understand the above definition; for $s = (s_0,\ldots,s_{p+q}) \in S(p, q)$, we have
\begin{align*}
(-1)^{\nu(s)} &= \prod_{i=1}^{n}(-1)^i\mu^{(s_i-s_{i-1},s_i)} \\
&= (-1)^{n+\nu(s_0, \dots, s_{n-1})}\mu^{(s_n-s_{n-1},s_n)}.
\end{align*}

\subsection{Example of a cycle in $\MH^\ell_n(C_N)$}

\begin{eg}
  Let \(x\in C_N\).
  We describe \(\theta_{2,1}(x) \in \MC_3^{m+1}(C_N)\) explicitly
  and prove \(\partial\theta_{2,1}(x) = 0\) directly.
  Since
  \begin{align*}
  S(2,1) = \bigl\{&\bigl((0, 0), (1, 0),(2, 0),(2, 1)\bigr), \\
  &\bigl((0, 0), (1, 0),(1, 1),(2, 1)\bigr), \\
  &\bigl((0, 0), (0, 1),(1,1),(2, 1)\bigr)\bigr\},
  \end{align*}
  we have
  \begin{align*}
    \theta_{2,1}(x)
    &= \sum_{s \in S(2,1)}
      (-1)^{\nu(s)}\varphi(x, s) \\
    &= -(x, \gamma x, x, \gamma^{-m+1}x) + (x, \gamma x, \gamma^m x, \gamma^{m+1}x) - (x, \gamma^{-1}x, \gamma^{-m}x, \gamma^{-m+1} x).
  \end{align*}
  Note that \(\gamma^{-m} x = \gamma^{m}x \in C_N\).
  Here we have
  \begin{itemize}
    \item \(\partial(x, \gamma x, x, \gamma^{-m+1}x) = (x, \gamma x, \gamma^{-m+1} x)\)
    \item \(\partial(x, \gamma x, \gamma^m x, \gamma^{m+1} x) = -(x, \gamma^m x, \gamma^{m+1}x) + (x, \gamma x, \gamma^{m+1}x)\)
    \item \(\partial(x, \gamma^{-1}x, \gamma^{-m}x, \gamma^{-m+1}x) = -(x, \gamma^{-m}x, \gamma^{-m+1}x)\)
  \end{itemize}
  and thus \(\partial\theta_{2,1}(x) = 0\).
  This example illustrates that each term in \(\theta_{2,1}(x)\) is not a cycle,
  but the sum \(\theta_{2,1}(x)\) of these terms is a cycle.
\end{eg}

\subsection{Step 1 for the proof : Construction of a free resolution}
In this subsection, we construct a free resolution of \(\Z^N\) over \(\mr{C_N}\) explicitly.

Define \(\possum, \negsum \in \mr{C_N}\) by
\[
\possum := \sum_{i=0}^{N-1}e_{\gamma^{i}x, \gamma^{i+1}x},\  \negsum := \sum_{i=0}^{N-1}e_{\gamma^{-i}x,\gamma^{-i-1}x},
\]
where $x$ is a fixed vertex of $C_N$.
Note that $a$ and $b$ do not depend on the choice of $x$.
We consider left multiplication maps
\(\la, \lb\colon \mr{C_N}\to\mr{C_N}\),
which are morphisms of right \(\mr{C_N}\)-modules.
Recall that \(\Z^N\) is the ``trivial'' \(\mr{C_N}\)-module
via the augmentation map \(\varepsilon: \mr{C_N} \to \mr{C_N}/rC_N \cong \Z^N\).
These maps satisfy the following relations:

\begin{lem}
  \label{lem:mult_rel}
  Let \(\la^k\) and \(\lb^k\) denote the \(k\)-times iteration of \(\la\) and \(\lb\), respectively.
  \begin{enumerate}
    \item \label{item:mult_comp} \(\la\lb = \lb\la = 0\) and \(\la^m = \lb^m\)
    \item \label{item:mult_exact} \(\ker\la = \im\lb\) and \(\ker\lb = \im\la\)
    \item \label{item:mult_im} \(\im\la^m = \im\lb^m = \im\la^{m-1}\cap\im\lb = \im\la\cap\im\lb^{m-1} = \im\la \cap \im\lb\)
    \item \label{item:mult_homol0} \(\ker\varepsilon = \im\la + \im\lb\)
    \item \label{item:mult_homol+}
      \(\ker\la^{m-1}\cap\ker\lb^{m-1} = \la(\ker\lb^{m-1}) + \lb(\ker\la^{m-1})\)
  \end{enumerate}
\end{lem}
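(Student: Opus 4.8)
The plan is to work concretely with the ring $\mr{C_N}$, whose additive basis is $\{e_{\gamma^i x,\gamma^j x}\}$ for pairs of vertices at finite (here always finite) distance, and to describe $\la$ and $\lb$ explicitly on this basis. First I would fix the vertex $x$ and identify $C_N$ with $\Z/N$ so that $\gamma$ is addition by $1$; then $e_{pq}$ (abbreviating $e_{\gamma^p x,\gamma^q x}$) satisfies $e_{pq}\cdot e_{qr}=e_{pr}$ exactly when $d(p,q)+d(q,r)=d(p,r)$ in $C_N$, and $a = \sum_p e_{p,p+1}$, $b=\sum_p e_{p,p-1}$. The key computation is that $\la$ sends $e_{pq}$ to $e_{p-1,q}$ when $d(p-1,p)+d(p,q)=d(p-1,q)$, i.e.\ when going from $p-1$ to $q$ the short way passes through $p$, and to $0$ otherwise; similarly for $\lb$ with $p+1$ in place of $p-1$. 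Iterating, $\la^k(e_{pq})$ is $e_{p-k,q}$ if the geodesic $p-k\to q$ runs ``increasingly'' through $p$ (and has length $k+d(p,q)\le m$), else $0$; and $\la^m=\lb^m$ because after $m$ steps in either direction one lands at the antipode, and the condition ``$d=m$'' is the only surviving case — this gives (1). For (2), $e_{pq}\in\ker\la$ means the increasing geodesic from $p-1$ does not pass through $p$, which (by geodeticity failing only at antipodal pairs) forces $q$ to be reached decreasingly from $p$, i.e.\ $e_{pq}\in\im\lb$; the reverse inclusion and the symmetric statement are analogous, with the antipodal case $d(p,q)=m$ needing a separate check since there both geodesics exist.

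For (3), everything reduces to bookkeeping on which basis elements $e_{pq}$ lie in each image: $\im\la^m$ is spanned by $e_{p,p+m}$ (antipodal pairs, increasing side), $\im\lb^m$ by $e_{p,p-m}=e_{p,p+m}$, and $\im\la\cap\im\lb$ is spanned by exactly those $e_{pq}$ that are simultaneously ``reachable increasingly'' and ``reachable decreasingly'' — which by the triangle inequality in $C_N$ happens only when $d(p,q)=m$. The intermediate intersections $\im\la^{m-1}\cap\im\lb$ and $\im\la\cap\im\lb^{m-1}$ are handled identically. Statement (4), $\ker\varepsilon=\im\la+\im\lb$, is immediate: $\ker\varepsilon=rC_N$ is spanned by $e_{pq}$ with $p\ne q$, and any such is $\la(e_{p+1,q})$ or $\lb(e_{p-1,q})$ according to the orientation of its geodesic (both if antipodal).

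The real work is (5), $\ker\la^{m-1}\cap\ker\lb^{m-1}=\la(\ker\lb^{m-1})+\lb(\ker\la^{m-1})$. The inclusion $\supseteq$ follows from $\la\lb=\lb\la=0$ and $\la^m=\lb^m$ (so $\la$ of something in $\ker\lb^{m-1}$ is killed by $\lb^{m-1}$, and by $\la^{m-1}$ since $\la^m=\lb^m$ annihilates $\ker\lb^{m-1}$ — one must be slightly careful here and use $\la^{m-1}\la = \la^m = \lb^m = \lb\lb^{m-1}$). For $\subseteq$, I would take a general element $\zeta\in\ker\la^{m-1}\cap\ker\lb^{m-1}$, expand it in the basis, and split the basis of $\mr{C_N}$ according to the orientation type of $e_{pq}$: ``purely increasing'' ($e_{p+1,q}$-type preimages under $\la$), ``purely decreasing'', ``$p=q$'', and ``antipodal''. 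The conditions $\la^{m-1}\zeta=0$ and $\lb^{m-1}\zeta=0$ pin down the coefficients of the long basis elements (those with $d(p,q)$ close to $m$), and one writes $\zeta = \la\eta + \lb\eta'$ by essentially peeling off one step of the geodesic from each term, checking that the resulting $\eta\in\ker\lb^{m-1}$ and $\eta'\in\ker\la^{m-1}$. I expect the main obstacle to be exactly this decomposition at the ``boundary'' length $m-1$: the antipodal basis elements $e_{p,p+m}$ lie in both $\im\la$ and $\im\lb$, so the splitting $\zeta=\la\eta+\lb\eta'$ is not unique and one must make a deliberate, consistent choice (e.g.\ always route antipodal terms through $\la$) and then verify the membership conditions for $\eta,\eta'$ survive; getting the off-by-one in the exponents $m-1$ versus $m$ right throughout is where care is needed.
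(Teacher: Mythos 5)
Your proposal is correct and is essentially the paper's argument: the paper also reduces everything to an explicit basis computation with the elements $e_{pq}$ (organizing it via the column decomposition $\mr{C_N}=\bigoplus_x(\mr{C_N})e_x$, which $\la$ and $\lb$ preserve since they act only on the left index) and then verifies the relations by the same kind of bookkeeping you describe. Your extra care in (5) about routing antipodal terms is harmless but unnecessary, since all kernels and images involved are spans of subsets of the basis, so the identity follows from comparing those subsets directly.
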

\begin{proof}
  Note that it is enough to show these relations as morphisms of \textit{abelian groups}.
  Consider the direct sum decomposition
  \(\mr{C_N} = \bigoplus_x (\mr{C_N})e_x\)
  as abelian groups.
  Moreover, the morphisms \(\la\) and \(\lb\) decompose to direct sums
  \(\la = \bigoplus_x\la^{(x)}\) and
  \(\lb = \bigoplus_x\lb^{(x)}\),
  where \(\la^{(x)}, \lb^{(x)}\colon (\mr{C_N})e_x\to(\mr{C_N})e_x\).
  Hence the relations are reduced to the similar relations
  for \(\la^{(x)}\) and \(\lb^{(x)}\),
  which can be proved by straightforward and easy computation.
\end{proof}

\newcommand{\dcpxgen}{v}
\begin{df}
  We define a double complex \((\dcpx, \partialh, \partialv)\) by
  \begin{align*}
    \dcpx_{pq} &=
    \begin{cases}
      \mr{C_N} & p,q\ge 0 \\
      0 & \text{otherwise},
    \end{cases}\\
    \partialh_{pq} &=
    \begin{cases}
     \mu^{((1, 0), (p, q))}\lmult{a^{h(p, q)}} & \text{\(p>0\) and \(q\ge 0\)} \\
      0 & \text{otherwise},
    \end{cases} \\
    \partialv_{pq} &=
    \begin{cases}
       \mu^{((0, 1), (p, q))}\lmult{a^{v(p, q)}} & \text{\(q>0\) and \(p\ge 0\)} \\
      0 & \text{otherwise,}
    \end{cases}
  \end{align*}
where $h(p, q) = \lambda((1, 0),(p, q)), v(p, q) = \lambda((0, 1),(p, q))$, and we formally set $a^{-1}= b$.
\end{df}

We can verify from  \cref{lem:mult_rel} \cref{item:mult_comp} that the above $D_{pq}$'s define a double complex. It is described as follows.

\begin{equation*}
  \xymatrix{
    \vdots\ar[d]^-{\la} & \vdots \ar[d]^-{\lb} & \vdots \ar[d]^-{\la} & \vdots \ar[d]^-{\lb} & \\
    \mr{C_N} \ar[d]^-{\lb} & \mr{C_N} \ar[l]^-{-\lb^{m-1}} \ar[d]^-{\la} & \mr{C_N} \ar[l]^-{\la^{m-1}} \ar[d]^-{\lb} & \mr{C_N} \ar[l]^-{-\lb^{m-1}} \ar[d]^-{\la^{m-1}} & \cdots \ar[l]^-{\la} \\
    \mr{C_N} \ar[d]^-{\la} & \mr{C_N} \ar[l]^-{\la^{m-1}} \ar[d]^-{\lb} & \mr{C_N} \ar[l]^-{-\lb^{m-1}} \ar[d]^-{\la^{m-1}} & \mr{C_N} \ar[l]^-{\la} \ar[d]^-{-\lb^{m-1}} & \cdots \ar[l]^-{\lb} \\
    \mr{C_N} \ar[d]^-{\lb} & \mr{C_N} \ar[l]^-{-\lb^{m-1}} \ar[d]^-{\la^{m-1}} & \mr{C_N} \ar[l]^-{\la} \ar[d]^-{-\lb^{m-1}} & \mr{C_N} \ar[l]^-{\lb} \ar[d]^-{\la^{m-1}} & \cdots \ar[l]^-{\la} \\
    \mr{C_N} & \mr{C_N} \ar[l]^-{\la} & \mr{C_N} \ar[l]^-{\lb} & \mr{C_N} \ar[l]^-{\la} & \cdots \ar[l]^-{\lb} \\
  }
\end{equation*}

\begin{prop}
  \label{thm:resol_C2m}
  The following sequence is a free resolution of \(\Z^N\) over \(\mr{C_N}\):

  \begin{equation*}
    \cdots\to
    \tot_2 \dcpx \xrightarrow{\partial}
    \tot_1 \dcpx \xrightarrow{\partial}
    \tot_0 \dcpx \xrightarrow{\varepsilon}
    \Z^N \to 0
  \end{equation*}

  Moreover, \(\partial\otimes\id = 0\) on the chain complex
  \(\tot_\ast D \otimes_{\mr{C_N}} \Z^N\).
\end{prop}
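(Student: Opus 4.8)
The plan is to dispose first of the formal parts and then to concentrate on exactness, which is the real content. Each \(\tot_n\dcpx=\bigoplus_{p+q=n}\dcpx_{pq}\) is a finite direct sum of copies of the free module \(\mr{C_N}\), hence free, and \((\tot_\ast\dcpx,\partial)\) is a complex by the computation noted just before the statement, which comes down to \cref{lem:mult_rel} \cref{item:mult_comp}. For the vanishing \(\partial\otimes\id=0\) I would argue through minimality. Every component of \(\partial\) is \(\pm\lmult{a^{k}}\) or \(\pm\lmult{b^{k}}\) with \(k\ge1\) (recall \(h(p,q),v(p,q)\in\{\pm1,\pm(m-1)\}\) and \(a^{-1}=b\)), so its image lies in \(a\,\mr{C_N}\) or \(b\,\mr{C_N}\), both contained in \(rC_N={\rm rad}(\mr{C_N})\) (\cref{rx}); hence \(\im\partial_n\subseteq{\rm rad}(\tot_{n-1}\dcpx)\) by \cref{radcoprod}. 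Once exactness is known, this, together with \(\ker\varepsilon=\im\lmult a+\im\lmult b=rC_N\) from \cref{lem:mult_rel} \cref{item:mult_homol0}, shows that \(\varepsilon\) and each \(\tot_n\dcpx\to\ker\partial_{n-1}\) is an essential epimorphism — their kernels are contained in \({\rm rad}(\tot_n\dcpx)\), which is small by \cref{projrad,nakayama} — so \(\tot_\ast\dcpx\to\Z^N\) is a \emph{minimal} projective resolution, and \(\partial\otimes\id=0\) is then exactly \cref{zerodiff}.

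It remains to show that \(\tot_\ast\dcpx\xrightarrow{\varepsilon}\Z^N\to0\) is exact. Exactness at degree \(0\), with \(H_0(\tot\dcpx)\cong\Z^N\) realized by \(\varepsilon\), is immediate from \(\im\partial_1=\im\lmult a+\im\lmult b=rC_N=\ker\varepsilon\) (\cref{lem:mult_rel} \cref{item:mult_homol0}). For \(n\ge1\) I would run the homology spectral sequence of the double complex \(\dcpx\) associated to the column filtration, which converges to \(H_{p+q}(\tot\dcpx)\). Each column is an alternating string of maps drawn from \(\{\pm\lmult{a^{\pm1}},\pm\lmult{a^{\pm(m-1)}}\}\), and its ``generic'' homology vanishes because \(\ker\lmult a=\im\lmult b\) and \(\ker\lmult b=\im\lmult a\) (\cref{lem:mult_rel} \cref{item:mult_exact}); a short inspection of the explicit differentials then shows \(E^1_{pq}=0\) unless \(0\le q\le p\), and in that range \(E^1_{pq}\) is one of the ``corner'' subquotients \(\ker\lmult{a^{m-1}}/\im\lmult b\), \(\ker\lmult{a^{m-1}}/\im\lmult{b^{m-1}}\), \(\ker\lmult{b^{m-1}}/\im\lmult{a^{m-1}}\), \(\mr{C_N}/\im\lmult{a^{m-1}}\) or \(\mr{C_N}/\im\lmult{b^{m-1}}\), the last two occurring only in the bottom row \(q=0\). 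The task then is to prove that every \(E^1\)-row (with \(q\) fixed) is an exact complex, the sole exception being that the row \(q=0\) leaves \(\Z^N\) at the corner \((0,0)\); granting this, \(E^2\) is concentrated at \((0,0)\) with value \(\Z^N\), the spectral sequence degenerates (\(E^\infty=E^2\)), and \(H_n(\tot\dcpx)=0\) for all \(n\ge1\), as required.

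The step I expect to be hardest is this exactness of the \(E^1\)-rows. For fixed \(q\ge1\) the row is, up to signs, a two-periodic complex whose terms are the corner subquotients \(\ker\lmult{b^{m-1}}/\im\lmult{a^{m-1}}\) and \(\ker\lmult{a^{m-1}}/\im\lmult{b^{m-1}}\) alternately, terminating in \(\ker\lmult{a^{m-1}}/\im\lmult b\to0\), with differentials induced by \(\lmult a\) and \(\lmult b\); one must verify its exactness, and the analogous statement for \(q=0\), using the finer relations \cref{lem:mult_rel} \cref{item:mult_im} and \cref{item:mult_homol+} — precisely the relations not needed merely to assemble the complex. Since \(\dcpx\) is symmetric under interchanging rows and columns together with \(a\leftrightarrow b\), the row filtration gives the mirror picture and serves as a consistency check. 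An alternative I would keep in reserve, which additionally produces the explicit cycles \(\theta_{pq}(x)\) through the shuffles \(S(p,q)\): the decomposition \(\mr{C_N}=\bigoplus_x\mr{C_N}e_x\) of abelian groups is preserved by \(\lmult a\) and \(\lmult b\) (as in the proof of \cref{lem:mult_rel}), so \(\tot_\ast\dcpx\) splits, as a complex of abelian groups, into a direct sum over \(x\) of total complexes of double complexes of copies of \(\Z^N\cong\mr{C_N}e_x\) with explicit truncated-cyclic-shift differentials; there exactness becomes a concrete combinatorial verification, and the basis elements that survive reassemble into the shuffle sums \(\theta_{pq}(x)\).
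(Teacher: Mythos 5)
Your treatment of the formal points is fine: the freeness of $\tot_n\dcpx$, the fact that $(\tot_\ast\dcpx,\partial)$ is a complex via \cref{lem:mult_rel}\,\cref{item:mult_comp}, and the degree-$0$ identification $H_0\cong\Z^N$ via \cref{lem:mult_rel}\,\cref{item:mult_homol0}. The argument for $\partial\otimes\id=0$ is also correct, although the paper reaches it more directly: since every component of $\partial$ is $\pm\lmult{a^k}$ or $\pm\lmult{b^k}$ with $k\geq 1$ and $a,b\in\ker\varepsilon=rC_N$, tensoring with $\Z^N\cong\mr{C_N}/rC_N$ immediately kills $\partial$; the paper observes minimality \emph{afterwards} as a remark, whereas you route through minimality and \cref{zerodiff}, which works but needs exactness first. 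That dependence is circular in your write-up as organized (you invoke minimality to get $\partial\otimes\id=0$ before establishing exactness), though easily fixed by either reordering or switching to the direct argument.

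The real content is exactness at $\tot_n\dcpx$ for $n\geq 1$, and here your approach is genuinely different from the paper's and, as written, has a genuine gap. You propose the column-filtration spectral sequence. Your analysis of $E^1$ is right: above the diagonal the columns are exact by \cref{lem:mult_rel}\,\cref{item:mult_exact}, so $E^1_{pq}=0$ unless $0\le q\le p$, and in that strip the entries are the subquotients you list. But the claim that the induced $E^1$ rows are exact away from $(0,0)$ is precisely the crux of the entire proposition, and you have only flagged it (``one must verify its exactness\ldots granting this\ldots'') rather than proven it. Parts \cref{item:mult_im} and \cref{item:mult_homol+} of \cref{lem:mult_rel} are stated as intersection and kernel identities for the raw multiplication maps on $\mr{C_N}$; converting them into exactness of a complex of subquotients like
\[
\cdots\to \ker\lmult{a^{m-1}}/\im\lmult{b^{m-1}} \to \ker\lmult{b^{m-1}}/\im\lmult{a^{m-1}} \to \ker\lmult{a^{m-1}}/\im\lmult{b} \to 0
\]
is an honest diagram chase that you have not carried out, and it is not a formal consequence of the lemma as stated. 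The paper avoids this entirely: instead of the spectral sequence, it uses the Bott--Chern-type invariant $\homolk{pq}{\dcpx}$ (\cref{prop:homolk_tot}, via \cref{lem:varphi_inj_surj} and \cref{lem:homolk_vanish}), whose hypotheses---$\im\partialh\cap\im\partialv=\im(\partialh\partialv)$ plus off-diagonal row/column exactness---are exactly \cref{lem:mult_rel} \cref{item:mult_im} and \cref{item:mult_exact}, no translation needed; the diagonal groups $\homolk{kk}{\dcpx}$ are then killed (for $k\neq 0$) directly by \cref{item:mult_homol+}. This sidesteps your $E^1$-row computation by never separating vertical from horizontal homology. Your alternative sketch via the abelian-group decomposition $\mr{C_N}=\bigoplus_x\mr{C_N}e_x$ is closer in spirit to how the lemmas themselves are proved, but again is only a sketch. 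So: a plausible and legitimately different strategy, but the hardest step---the one that actually uses \cref{item:mult_im} and \cref{item:mult_homol+}---is left unproved.
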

\newcommand{\homolk}[2]{H_{#1}^{\mathrm{K}}(#2)}
\begin{proof}
  By \cref{lem:mult_rel} \cref{item:mult_exact} and \cref{item:mult_im},
  we can apply \cref{prop:homolk_tot}, that will be proved later, to this double complex
  and hence \(H_{2k-1}(\tot\dcpx) = 0\) for all \(k\in\Z\).
  By \cref{lem:mult_rel} \cref{item:mult_homol0} and \cref{item:mult_homol+},
  we have
  \(H_0(\tot\dcpx) \cong \homolk{00}{\dcpx} \cong \Z^N\) and
  \(H_{2k}(\tot\dcpx) \cong \homolk{kk}{\dcpx} = 0\) for \(k\ne 0\), respectively.
  Hence the above sequence is exact.

  Since \(a,b\in \mathrm{rad}(\mr{C_N}) = \ker(\varepsilon\colon\mr{C_N}\twoheadrightarrow\Z^N)\),
  we have \(\partial\otimes\id = 0\) on \(\tot\dcpx \otimes_{\mr{C_N}}\Z^N\).
\end{proof}

Note that
the above resolution is a minimal projective resolution by the property
\(a,b\in \mathrm{rad}(\mr{C_N}) = \ker(\varepsilon\colon\mr{C_N}\twoheadrightarrow\Z^N)\).

\subsection{Step 2 for the proof : Comparison of complexes}\label{comparison}
\newcommand{\paths}[1]{P(#1)}
In this subsection, we give a proof of \cref{thm:even} by comparing two complexes;
the complex $\tot_\ast\dcpx$ obtained in the previous subsection and  \((\MResol_\ast(X), \partial_\ast)\), the projective resolution of the $\mr{X}$-module $\Z^N$ constructed in \cref{mhdf}. That is, \(\MResol_n(X) = \Z X^{n+2}_{f}\) for \(n\ge 0\) and
\(\MResol_n(X) = 0\) for \(n < 0\).

\begin{df}
  For \(p,q\ge 0\), let \(\dcpxgen_{pq}\)  be the unit \(1\in\mr{C_N}=\dcpx_{pq}\). We define a map \(f\colon \tot_n\dcpx = \bigoplus_{p+q=n}v_{pq}\mr{C_N} \to \MResol_n(C_N)\)
  of \(\mr{C_N}\)-modules by
  \[
  f(v_{pq}) = \sum_{x\in C_N}\sum_{s\in S(p,q)}(-1)^{\nu(s)}\wt\varphi(x,s),
  \]
  where we set $\wt\varphi(x,s) = (x_0, \dots, x_{n}, x_n)$ in association with $\varphi(x,s) = (x_0, \dots, x_{n})$.
\end{df}

The following lemma is a  key to prove that \(f\) is a chain map.

\begin{lem}
  \label{lem:even_diff}
  For all \((x, (s_0,\ldots,s_n)) \in C_N \times S(p, q)\) with $p + q = n$, we have the following. Here $\del_{n, i}$'s are the homomorphisms defined in \cref{mhdf}.
  \begin{enumerate}
    \item \(\partial_{n,0}(\wt\varphi(x,(s_0,\ldots,s_n))) = 0\)
    \item \label{item:even_diff_middle}
      For \(1\le i \le n-1\), let $s'_i$ be the reflection of $s_i$ through the line $\overline{s_{i-1}s_{i+1}}$. Then
\[
\partial_{n,i}(\wt\varphi(x,(s_0,\ldots,s_n))) = \begin{cases}
    0 & s_i = s'_i \\
    \partial_{n,i}(\wt\varphi(x,(s_0,\ldots, s_{i-1}, s'_i, s_{i+1}, \dots, s_n))) & s_i\neq s'_i
\end{cases}.
\]
When \(s_i\neq s'_i\),
 we also have
          \(\nu(s_0,\ldots,s_n) - \nu(s_0,\ldots, s_{i-1}, s'_i, s_{i+1}, \dots, s_n) = \pm 1\).
    \item \(\partial_{n,n}(\wt\varphi(x,(s_0,\ldots,s_n))) = \wt\varphi(x,(s_0,\ldots,s_{n-1}))\cdot a^{\lambda(s_n-s_{n-1},s_n)}\).
  \end{enumerate}
\end{lem}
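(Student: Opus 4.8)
The plan is to prove all three parts by unwinding the definitions of $\varphi$, $\nu$, and $\lambda$ directly, using only two facts about $C_N$: that $\gamma$ has order $N=2m$, and that $d(\gamma^{k}y,y)=|k|$ for every $y\in C_N$ whenever $|k|\le m$. Throughout I set $c_j:=\lambda(s_j-s_{j-1},s_j)$, so that $x_j=\gamma^{c_j}x_{j-1}$; note $c_j\in\{\pm1,\pm(m-1)\}$ always and $|c_j|\not\equiv 0\pmod N$ since $m\ge3$. I will also use that $a^{k}=\sum_{y\in C_N}e_{y,\gamma^{k}y}$ for every $k$ with $|k|\le m$, with the convention $a^{-1}=b$; this is immediate by induction on $|k|$. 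For part (1), the first step forces $s_1=s_1-s_0\in\{(1,0),(0,1)\}$, and the formula for $\lambda$ gives $c_1=\lambda((1,0),(1,0))=1$ or $c_1=\lambda((0,1),(0,1))=-1$, so $x_1=\gamma^{\pm1}x_0\ne x_0$; with the convention $x_{-1}=x_1$, the condition governing $\partial_{n,0}$ reads $x_1\le x_0\le x_1$, i.e.\ $2d(x_0,x_1)=d(x_1,x_1)=0$, which is impossible, so $\partial_{n,0}(\wt\varphi(x,s))=0$. For part (3), the last two coordinates of $\wt\varphi(x,s)$ both equal $x_n$, so the condition governing $\partial_{n,n}$ holds automatically and $\partial_{n,n}(\wt\varphi(x,s))=(x_0,\dots,x_{n-1},x_n)$; on the other hand $\wt\varphi(x,(s_0,\dots,s_{n-1}))=(x_0,\dots,x_{n-1},x_{n-1})$ and right multiplication by $a^{c_n}$ acts only on the last coordinate, sending it to $\gamma^{c_n}x_{n-1}=x_n$, so the two sides agree.

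The substance is part (2). Fix $1\le i\le n-1$. Suppose first that the steps $s_{i-1}\to s_i$ and $s_i\to s_{i+1}$ point in the same direction, so that $s_i=s_i'$. From the formula for $\lambda$ one reads off that $c_{i+1}=-c_i$ unless $s_i$ lies on the diagonal $\{(t,t)\}$, in which exceptional case $\{|c_i|,|c_{i+1}|\}=\{1,m-1\}$ with $c_i$ and $c_{i+1}$ of opposite sign. In the first situation $x_{i+1}=\gamma^{c_i+c_{i+1}}x_{i-1}=x_{i-1}$ while $x_i\ne x_{i-1}$, so $x_{i-1}\le x_i\le x_{i+1}$ fails; in the second, $c_i+c_{i+1}=\pm(m-2)$ so $d(x_{i-1},x_{i+1})=m-2$ whereas $d(x_{i-1},x_i)+d(x_i,x_{i+1})=|c_i|+|c_{i+1}|=m$, so the condition fails again. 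Either way $\partial_{n,i}(\wt\varphi(x,s))=0$, as required.

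Now suppose the two steps at $s_i$ point in different directions, so that $s_i$ and $s_i'$ are the two corners of a unit square and $s'':=(s_0,\dots,s_{i-1},s_i',s_{i+1},\dots,s_n)$ is again a shuffle. The key computation is that, writing $c_i',c_{i+1}'$ for the corresponding exponents along $s''$, both sums $c_i+c_{i+1}$ and $c_i'+c_{i+1}'$ equal $\pm m$ — a direct evaluation of the formula for $\lambda$ — and are therefore congruent mod $N$ because $m\equiv-m\pmod N$. Consequently $x_{i+1}=\gamma^m x_{i-1}$ is the antipode of $x_{i-1}$, and replacing $s_i$ by $s_i'$ alters neither $x_{i+1}$ nor — by induction on the remaining coordinates — any $x_j$ with $j>i$, so $\wt\varphi(x,s)$ and $\wt\varphi(x,s'')$ differ only in the $i$-th entry, namely $\gamma^{c_i}x_{i-1}$ versus $\gamma^{c_i'}x_{i-1}$ with $c_i,c_i'\in\{\pm1,\pm(m-1)\}$. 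A direct check gives $d(x_{i-1},\gamma^{c}x_{i-1})+d(\gamma^{c}x_{i-1},\gamma^m x_{i-1})=m$ for each $c\in\{\pm1,\pm(m-1)\}$, so both $x_i$ and $x_i'$ lie on a geodesic from $x_{i-1}$ to $x_{i+1}$; hence $\partial_{n,i}$ deletes the $i$-th entry of both $\wt\varphi(x,s)$ and $\wt\varphi(x,s'')$ and yields the same tuple, giving $\partial_{n,i}(\wt\varphi(x,s))=\partial_{n,i}(\wt\varphi(x,s''))$. Finally, the triangular terms of $\nu$ cancel, so $\nu(s)-\nu(s'')$ equals the number of indices in $\{i,i+1\}$ at which the exponent equals $-(m-1)$ minus the same count for $s''$; writing $s_{i-1}=(p,q)$ and reading these four exponents off from the sign $(-1)^{p+q}$ and the order of $p$ and $q$ — two corner orientations and two parities of $p+q$ to examine — one finds in every case that the difference is $+1$ or $-1$, i.e.\ $\nu(s)-\nu(s'')=\pm1$.

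The step I expect to be the main obstacle is exactly this last case: recognising that the two exponent sums straddling a corner land on $\pm m$ (the congruence mod $N$ here, powered by $m\equiv-m$, is precisely the identity that makes $\varphi$ behave consistently under exchanging the corner of a shuffle), and then running the four-fold indicator count that pins $\nu(s)-\nu(s'')$ to $\pm1$ uniformly across both corner orientations.
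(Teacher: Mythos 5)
Your proof is correct and follows essentially the same route as the paper: unwind the identity $(x_{i-1},x_i,x_{i+1}) = (x_{i-1},\gamma^{c_i}x_{i-1},\gamma^{c_i+c_{i+1}}x_{i-1})$, treat the collinear case $s_i=s'_i$ and the corner case $s_i\ne s'_i$ separately, and reduce the $\nu$-equation to the observation that exactly one of the four $\lambda$-values around the unit square is $-(m-1)$. Where the paper declares (1), (3), and the geometric facts in (2) ``immediate,'' you fill in the arithmetic: the computation that $c_{i+1}=-c_i$ off the diagonal and $c_i+c_{i+1}=\pm(m-2)$ on it (giving $d(x_{i-1},x_{i+1})<d(x_{i-1},x_i)+d(x_i,x_{i+1})$ in both subcases), the computation that both $c_i+c_{i+1}$ and $c'_i+c'_{i+1}$ land on $\pm m$ and hence agree mod $N=2m$ (so that $x_j=x'_j$ for $j\ne i$), the check that $d(x_{i-1},\gamma^c x_{i-1})+d(\gamma^c x_{i-1},\gamma^m x_{i-1})=m$ for all $c\in\{\pm1,\pm(m-1)\}$, and the four-case $\xi$-bookkeeping behind the $\nu(s)-\nu(s'')=\pm1$ claim. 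All of these details are verified correctly, so this is a valid, if more explicit, rendering of the paper's argument rather than a different one.
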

\begin{proof}
  (1) and (3) are immediate.
  For (2), recall that
  \begin{equation*}
    (x_{i-1}, x_i, x_{i+1}) = (x_{i-1}, \gamma^{\lambda(s_i-s_{i-1},s_i)}x_{i-1}, \gamma^{\lambda(s_i-s_{i-1},s_i)+\lambda(s_{i+1}-s_{i},s_{i+1})}x_{i-1})
  \end{equation*}
  for \((x_0, \dots, x_{n}, x_n) := \wt\varphi(x,s)\).
  When \(s_i=s'_i\), we have \(s_i-s_{i-1} = s_{i+1}-s_i\) and hence \(\neg(x_{i-1}\le x_i\le x_{i+1})\),
  which implies \(\partial_{n,i}(\wt\varphi(x,(s_0,\ldots,s_n)))=0\).
  When \(s_i\neq s'_i\), we have \((s_i-s_{i-1},s_{i+1}-s_i) \in \{((1,0),(0,1)), ((0,1),(1,0))\}\).
  In each case, we have \(x_{i-1}\le x_i\le x_{i+1}\) and \(x'_{i-1}\le x'_i\le x'_{i+1}\),
  where \((x'_0, \ldots, x'_{n}, x'_n) := \wt\varphi(x,(s_0,\ldots, s_{i-1}, s'_i, s_{i+1}, \ldots, s_n))\).
  Since \(x_k=x'_k\) except for \(k=i\), we have
  \begin{align*}
    \partial_{n,i}(\wt\varphi(x,(s_0,\ldots,s_n)))
    &= (x_0,\ldots, \hat{x}_i,\ldots, x_n, x_n) \\
    &= (x'_0,\ldots, \hat{x'}_i,\ldots, x'_n, x'_n) \\
    &= \partial_{n,i}(\wt\varphi(x,(s_0,\ldots, s_{i-1}, s'_i, s_{i+1}, \dots, s_n))).
  \end{align*}
  The equation for \(\nu\) follows from the fact that
  exactly one of
  \(\lambda(s_i-s_{i-1}, s_i)\),
  \(\lambda(s_{i+1}-s_i, s_{i+1})\),
  \(\lambda(s'_i-s_{i-1}, s'_i)\) or
  \(\lambda(s_{i+1}-s'_i, s_{i+1})\)
  is equal to \(-(m-1)\).
\end{proof}

\begin{prop}\label{dffd}
    The homomorphism \(f\colon \tot_\ast\dcpx \to \MResol_\ast(C_N)\) is a chain map over \(\mr{C_N}\).
\end{prop}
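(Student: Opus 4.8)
The plan is to verify the chain‑map identity $\partial\circ f = f\circ\partial$ on the right‑module generators $v_{pq}$ of $\tot_\ast\dcpx$, which suffices since $f$ and both differentials are maps of right $\mr{C_N}$‑modules. Fix $p,q\ge 0$ and put $n=p+q$; the task is to evaluate $\partial f(v_{pq}) = \sum_{i=0}^{n}(-1)^i\partial_{n,i}(f(v_{pq}))$ and split it according to whether $i=0$, $1\le i\le n-1$, or $i=n$, using the three parts of \cref{lem:even_diff} respectively. The range $i=0$ is immediate: \cref{lem:even_diff}(1) gives $\partial_{n,0}(\wt\varphi(x,s)) = 0$ for all $x$ and $s$, so that summand vanishes.

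For the middle range I would treat each $i$ separately and use the involution $\rho_i$ of $S(p,q)$ that replaces the $i$‑th vertex $s_i$ of a shuffle by its mirror image $s'_i$ across the line $\overline{s_{i-1}s_{i+1}}$. One checks that $\rho_i$ is well defined: a monotone lattice path either goes straight at its $i$‑th vertex, in which case $s_i = s'_i$ and $\rho_i$ fixes it, or turns there, in which case $s_{i-1}$ and $s_{i+1}$ differ by $(1,1)$ and $\rho_i$ merely interchanges the two intermediate lattice points, again yielding a path in $S(p,q)$. By \cref{lem:even_diff}(2) a $\rho_i$‑fixed shuffle contributes $0$ to $\partial_{n,i}(f(v_{pq}))$, while on a two‑element orbit $\{s,\rho_i(s)\}$ one has $\partial_{n,i}(\wt\varphi(x,s)) = \partial_{n,i}(\wt\varphi(x,\rho_i(s)))$ together with $\nu(s)-\nu(\rho_i(s)) = \pm1$, so the two contributions cancel. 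Hence $\sum_{i=1}^{n-1}(-1)^i\partial_{n,i}(f(v_{pq})) = 0$.

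It then remains to match the $i=n$ term with $f(\partial v_{pq})$. Combining \cref{lem:even_diff}(3) with the identity $(-1)^{\nu(s)} = (-1)^{n+\nu(s_0,\ldots,s_{n-1})}\mu^{(s_n-s_{n-1},s_n)}$ recorded after \cref{nudef} (so that $(-1)^n(-1)^{\nu(s)} = (-1)^{\nu(s_0,\ldots,s_{n-1})}\mu^{(s_n-s_{n-1},s_n)}$), one obtains
\[
(-1)^n\partial_{n,n}(f(v_{pq})) = \sum_{x\in C_N}\sum_{s\in S(p,q)} (-1)^{\nu(s_0,\ldots,s_{n-1})}\,\mu^{(s_n-s_{n-1},s_n)}\,\wt\varphi(x,(s_0,\ldots,s_{n-1}))\cdot a^{\lambda(s_n-s_{n-1},s_n)}.
\]
Partitioning $S(p,q)$ by whether the last step $s_n-s_{n-1}$ is $(1,0)$ or $(0,1)$ — in those cases $(s_0,\ldots,s_{n-1})$ ranges over $S(p-1,q)$, respectively $S(p,q-1)$, and $\lambda(s_n-s_{n-1},s_n)$ equals $h(p,q)$, respectively $v(p,q)$ — and using right $\mr{C_N}$‑linearity of $f$ with $v_{p-1,q}=v_{p,q-1}=1$, the two pieces become $\mu^{((1,0),(p,q))}f(v_{p-1,q})\cdot a^{h(p,q)} = f(\partialh_{pq}(v_{pq}))$ and $\mu^{((0,1),(p,q))}f(v_{p,q-1})\cdot a^{v(p,q)} = f(\partialv_{pq}(v_{pq}))$. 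Their sum is $f((\partialh_{pq}+\partialv_{pq})(v_{pq})) = f(\partial v_{pq})$, and together with the vanishing of the other two ranges this gives $\partial f(v_{pq}) = f(\partial v_{pq})$.

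Since the genuine content is already isolated in \cref{lem:even_diff}, the proposition amounts to bookkeeping; I expect the only delicate point to be the sign accounting in the $i=n$ step, namely checking that the factor $\mu^{(s_n-s_{n-1},s_n)}$ that appears on passing from a shuffle to its length‑$(n-1)$ truncation is precisely the sign attached to $\partialh_{pq}$ and $\partialv_{pq}$ in the double complex, and confirming that the involution $\rho_i$ in the middle range preserves the length grading so that \cref{lem:even_diff}(2) applies verbatim.
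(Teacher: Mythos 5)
Your proof is correct and follows the same route as the paper: evaluate $\partial f(v_{pq})$ on generators, split the sum according to $i=0$, $1\le i\le n-1$, and $i=n$, and apply the corresponding parts of \cref{lem:even_diff}. The only difference is presentational—you spell out the involution $\rho_i$ explicitly for the middle range, whereas the paper leaves this implicit in its appeal to \cref{lem:even_diff}(2)—but the substance is identical.
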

\begin{proof}
    Let $n = p+q$. By \cref{lem:even_diff} (1), we have
    \begin{align*}
        \del_n f(v_{pq}) &= \sum_{x \in C_N}\sum_{s \in S(p, q)}(-1)^{\nu(s)}\del_n \wt\varphi(x,s) \\
        &= \sum_{x \in C_N}\sum_{s \in S(p, q)}\sum_{i=0}^n(-1)^{\nu(s)+i}\del_{n, i} \wt\varphi(x,s) \\
        &= \sum_{x \in C_N}\sum_{s \in S(p, q)}\sum_{i=1}^{n-1}(-1)^{\nu(s)+i}\del_{n, i} \wt\varphi(x,s) +
        \sum_{x \in C_N}\sum_{s \in S(p, q)}(-1)^{\nu(s)+n}\del_{n, n} \wt\varphi(x,s).
    \end{align*}
    For each $1 \leq i \leq n-1$ and $x \in C_N$, we have $\sum_{s \in S(p, q)}(-1)^{\nu(s)+i}\del_{n, i} \wt\varphi(x,s) = 0$ by \cref{lem:even_diff} (2). Also, by \cref{lem:even_diff} (3), we have
    \begin{align*}
     \sum_{x \in C_N}\sum_{s \in S(p, q)}(-1)^{\nu(s)+n}\del_{n, n} \wt\varphi(x,s) &= \sum_{x \in C_N}\sum_{t \in S(p-1, q)}(-1)^{\nu(t)}\mu^{((1, 0),(p, q))}\wt\varphi(x,t)a^{\lambda((1, 0), (p, q))}\\
     &\ \ \ + \sum_{x \in C_N}\sum_{t \in S(p, q-1)}(-1)^{\nu(t)}\mu^{((0, 1),(p, q))}\wt\varphi(x,t)a^{\lambda((0, 1), (p, q))} \\
     &= f(v_{p-1, q})\mu^{((1, 0),(p, q))}a^{\lambda((1, 0), (p, q))} \\
     &\ \ \ + f(v_{p, q-1})\mu^{((0, 1),(p, q))}a^{\lambda((0, 1), (p, q))}  \\
     &= f(\del_n v_{p, q}). \qedhere
    \end{align*}
\end{proof}

\begin{prop}
  \label{prop:MR_equiv}
  The homomorphism \(f\colon \tot_\ast\dcpx \to \MResol_\ast(C_N)\) is a chain homotopy equivalence over \(\mr{C_N}\).
\end{prop}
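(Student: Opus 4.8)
The plan is to deduce this from the fundamental theorem of homological algebra, exactly as in the proof of \cref{nongrad}: a chain map between two projective resolutions of the same module which is compatible with the augmentations is automatically a chain homotopy equivalence. Both $\tot_\ast\dcpx$ and $\MResol_\ast(C_N)$ are resolutions of the $\mr{C_N}$-module $\Z^N$ by free modules, concentrated in non-negative degrees --- the former by \cref{thm:resol_C2m}, the latter by the construction in \cref{mhdf} --- and $f$ is a chain map over $\mr{C_N}$ by \cref{dffd}. Hence the only point still to check is that $f$ covers $\id_{\Z^N}$, i.e.\ that the square formed by $f_0$, the two augmentations and $\id_{\Z^N}$ commutes.

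For this I would simply compute $f_0$. Under the identifications $\tot_0\dcpx = \dcpx_{00} = \mr{C_N}$ (with $v_{00} = 1$) and $\MResol_0(C_N) = \Z(C_N)^2_f = \mr{C_N}$, the set $S(0,0)$ consists of the single constant path $s$, for which $\nu(s) = 0$ and $\wt\varphi(x,s) = (x,x) = e_x$. Therefore $f_0(v_{00}) = \sum_{x\in C_N}e_x = 1\in\mr{C_N}$, and since $f_0$ is $\mr{C_N}$-linear and $v_{00}$ generates $\tot_0\dcpx$ freely, this forces $f_0 = \id_{\mr{C_N}}$. The augmentation $\varepsilon\colon\tot_0\dcpx\to\Z^N$ of \cref{thm:resol_C2m} and the augmentation $\del_0\colon\MResol_0(C_N)\to\Z^N$ of \cref{mhdf} are both the canonical quotient $\mr{C_N}\to\mr{C_N}/rC_N\cong\Z^N$, so $\del_0\circ f_0 = \varepsilon$ and $f$ is a morphism of augmented complexes over $\id_{\Z^N}$.

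Now the comparison theorem applies: choosing a chain map $g\colon\MResol_\ast(C_N)\to\tot_\ast\dcpx$ that also lifts $\id_{\Z^N}$ (it exists because $\MResol_\ast(C_N)$ is a complex of projectives and $\tot_\ast\dcpx$ is a resolution), the composites $g\circ f$ and $f\circ g$ both lift $\id_{\Z^N}$ and are hence chain homotopic to the respective identity chain maps; thus $f$ is a chain homotopy equivalence over $\mr{C_N}$ with homotopy inverse $g$. I do not expect a genuine obstacle in this argument: once \cref{dffd} is available the statement is formal, and the only thing requiring (trivial) care is the identification of the two augmentations, which the computation of $f_0$ settles.
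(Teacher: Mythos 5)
Your argument is correct and is essentially the paper's own proof: the paper also deduces the statement from the comparison theorem, citing \cref{thm:resol_C2m} and \cref{barresol} to see that both $\tot_\ast\dcpx$ and $\MResol_\ast(C_N)$ are projective resolutions of $\Z^N$, so that the chain map $f$ of \cref{dffd} is automatically a chain homotopy equivalence. Your explicit computation of $f_0$ and the check that $f$ lifts $\id_{\Z^N}$ only spell out a compatibility that the paper leaves implicit, so there is no substantive difference.
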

\begin{proof}
  By \cref{thm:resol_C2m,barresol},
  both \(\tot_\ast\dcpx\) and \(\MResol_\ast(C_N)\) are projective resolutions of \(\Z^N\) over \(\mr{C_N}\).
  Hence the map \(f\) is a chain homotopy equivalence.
\end{proof}

Now we are ready to prove \cref{thm:even}.
\begin{proof}[Proof of \cref{thm:even}]
  By \cref{prop:MR_equiv}, we have
  \begin{equation*}
    f_\ast\colon
    H_n(\tot\dcpx \otimes_{\mr{C_N}}\Z^N)
    \xrightarrow{\cong} H_n(\MResol(C_N)\otimes_{\mr{C_N}}\Z^N)
    = \MH_n(C_N).
  \end{equation*}
  Here \cref{thm:resol_C2m} implies
  \(H_n(\tot\dcpx \otimes_{\mr{C_N}}\Z^N) = \tot_n\dcpx \otimes_{\mr{C_N}}\Z^N\) and
  its \(\Z\)-basis is given by \(\{v_{pq}\otimes e_x\}_{\substack{x\in C_N\\ \ p+q=n}}\), 
  whose image under \(f_\ast\) is \(\{\theta_{pq}(x)\}_{\substack{x\in C_N\\ \ p+q=n}}\).
\end{proof}

\begin{rem}
  Note that the method given here
  can be also applied to the odd case,
  and the odd case is much simpler than the even case
  since \(\im\la\cap\im\lb = 0\)
  and a complicated proposition (\cref{prop:homolk_tot}) in homological algebra is not necessary.
  This coincides with the comment
  ``It turns out that magnitude homology of odd cycles has
  more complicated description but easier computation''
  given in \cite[Section 4.4]{Gu}.
\end{rem}

\subsection{Homological algebra}
\label{subsection:homol_alg}
In this subsection,
we show \cref{prop:homolk_tot}, which is used in the proof of \cref{thm:resol_C2m}.

Let \(\dcpx = (\{\dcpx_{pq}\}_{p,q\in\Z}, \{\partialh_{pq}\}, \{\partialv_{pq}\})\) be a double complex,
where
\(\partialh_{pq}\colon \dcpx_{pq}\to \dcpx_{p-1,q}\),
\(\partialv_{pq}\colon \dcpx_{pq}\to \dcpx_{p,q-1}\) and
\(\partialh\partialh = \partialv\partialv = \partialh\partialv + \partialv\partialh = 0\).
Denote the total chain complex by
\(\tot \dcpx = (\{\tot_n \dcpx\}_{n\in\Z}, \partial = \partialh + \partialv)\),
where \(\tot_n \dcpx = \bigoplus_{p+q=n}\dcpx_{pq}\).

\begin{df}
  For a double complex \(\dcpx\) and \(p, q\in \Z\), we define
  \begin{equation*}
    \homolk{pq}{\dcpx}
    := \left(\frac{\ker\partialh \cap \ker\partialv}{\partialh(\ker\partialv) + \partialv(\ker\partialh)}\right)_{pq}
    = \frac{\ker\partialh_{pq} \cap \ker\partialv_{pq}}{\partialh_{p+1,q}(\ker\partialv_{p+1,q}) + \partialv_{p,q+1}(\ker\partialh_{p,q+1})}.
  \end{equation*}
  Let \(\varphi_{pq}\colon \homolk{pq}{\dcpx} \to H_{p+q}(\tot \dcpx)\)
  be the morphism induced by the inclusion
  \(\ker\partialh\cap\ker\partialv\to\tot \dcpx\).
\end{df}

\begin{rem}
  The Bott-Chern (co)homology
  \begin{equation*}
    H_{pq}^{\mathrm{BC}}(\dcpx) = \left(\frac{\ker\partialh\cap\ker\partialv}{\im(\partialh\partialv)}\right)_{pq}
  \end{equation*}
  plays an important role in complex geometry (c.f.\ \cite{St}).
  Note that the above homology \(\homolk{pq}{\dcpx}\) is slightly different from \(H_{pq}^{\mathrm{BC}}(\dcpx)\)
  and they are related by the natural surjection \(H_{pq}^{\mathrm{BC}}(\dcpx) \to \homolk{pq}{\dcpx}\).
\end{rem}

Although the following two lemmas can be proved by standard arguments in homological algebra,
we give proofs here for convenience of the reader.

\begin{lem}
  \label{lem:varphi_inj_surj}
  Assume that
  \(\im\partialh_{p+1,q}\cap\im\partialv_{p,q+1} = \im(\partialh_{p+1,q}\partialv_{p+1,q+1})\)
  holds for all \(p,q \in \Z\).
  Then
  \begin{enumerate}
    \item \label{item:varphi_inj}
      the map \(\varphi_{pq}\colon \homolk{pq}{\dcpx} \to H_{p+q}(\tot \dcpx)\)
      is injective for all \(p,q \in \Z\), and
    \item \label{item:varphi_surj}
      \(\sum_{p+q=n}\varphi_{pq}\colon \bigoplus_{p+q=n}\homolk{pq}{\dcpx}\to H_n(\tot \dcpx)\)
      is surjective for all \(n \in \Z\).
  \end{enumerate}
\end{lem}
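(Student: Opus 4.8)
The plan is to establish both parts by the same ``staircase'' mechanism: represent a class (for surjectivity) or a nullhomology (for injectivity) by a chain concentrated on an anti-diagonal of \(\dcpx\), and then repeatedly add total boundaries to shrink the support of that chain. The whole use of the hypothesis is packaged in one elementary step, which I will call the \emph{corner move}. Suppose \(\alpha\in\dcpx_{a,b}\) satisfies \(\partialh\alpha\in\im\partialv_{a-1,b+1}\). Applying the hypothesis with \((p,q)=(a-1,b)\) gives \(\partialh\alpha\in\im\partialh_{a,b}\cap\im\partialv_{a-1,b+1}=\im(\partialh_{a,b}\partialv_{a,b+1})\), so \(\partialh\alpha=\partialh\partialv c\) for some \(c\in\dcpx_{a,b+1}\); hence \(\alpha-\partialv c\) is \(\partialh\)-closed, has the same \(\partialv\)-image as \(\alpha\), and differs from \(\alpha\) only by the bidegree-\((a,b)\) component of a total boundary \(\partial c\) (whose other component sits in bidegree \((a-1,b+1)\)). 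Because of the anticommutativity \(\partialh\partialv=-\partialv\partialh\) the hypothesis is symmetric under swapping the two directions, so the mirrored corner move (exchanging \(\partialh\leftrightarrow\partialv\)) is available as well. Since every chain in sight is a finite sum, these reductions make sense even though \(\dcpx\) is \(\Z\times\Z\)-indexed.

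For part (2), I would take a cycle \(z=\sum_{p+q=n}z_{pq}\) in \(\tot_n\dcpx\) and induct on \(\#\mathrm{supp}(z)\). Let \((a,b)\) be the summand of largest \(p\)-coordinate. The cycle equation in bidegree \((a,b-1)\) forces \(\partialv z_{a,b}=0\), and the one in bidegree \((a-1,b)\) gives \(\partialh z_{a,b}=-\partialv z_{a-1,b+1}\). If \(z_{a-1,b+1}=0\), then \(z_{a,b}\in\ker\partialh\cap\ker\partialv\), so \(z-z_{a,b}\) is a cycle of strictly smaller support and \([z]=\varphi_{a,b}[z_{a,b}]+[z-z_{a,b}]\). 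Otherwise the corner move yields \(c\) with \(\partialh(z_{a,b}-\partialv c)=0\); replacing \(z\) by the homologous cycle \(z-\partial c\) makes its bidegree-\((a,b)\) term a \((\partialh,\partialv)\)-cycle, which can then be split off as above, again shrinking the support. Iterating writes \([z]\) as a sum of classes \(\varphi_{p_kq_k}[\zeta_k]\) with \(\zeta_k\in(\ker\partialh\cap\ker\partialv)_{p_kq_k}\), proving surjectivity.

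For part (1), suppose \(\xi\in(\ker\partialh\cap\ker\partialv)_{pq}\) and \(\xi=\partial w\) with \(w=\sum_{p'+q'=p+q+1}w_{p'q'}\). Here I would shrink \(\mathrm{supp}(w)\) while keeping \(\partial w=\xi\). If the top summand \(w_{a,b}\) has \(a>p+1\), then (both bidegrees \((a,b-1)\) and \((a-1,b)\) being different from \((p,q)\)) the boundary equations give \(\partialv w_{a,b}=0\) and \(\partialh w_{a,b}\in\im\partialv_{a-1,b+1}\); the corner move then lets us subtract a \(\partial c\) so that the bidegree-\((a,b)\) term becomes a \((\partialh,\partialv)\)-cycle, and subtracting that term — a total cycle, so \(\partial w=\xi\) is undisturbed — removes it and lowers \(\max p\). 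Symmetrically, the mirrored corner move raises \(\min p\) (without re-enlarging \(\max p\), since its modifications live in \(p\)-coordinate \(\le p\)) until \(\min p\ge p\). At that point \(w\) is supported on \(\{(p+1,q),(p,q+1)\}\), and matching \(\partial w\) against \(\xi\) forces \(\partialv w_{p+1,q}=0\), \(\partialh w_{p,q+1}=0\) and \(\xi=\partialh w_{p+1,q}+\partialv w_{p,q+1}\); thus \(\xi\in\partialh(\ker\partialv)_{pq}+\partialv(\ker\partialh)_{pq}\), i.e.\ \(\xi\) vanishes in \(\homolk{pq}{\dcpx}\), giving injectivity.

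The only real work is the bookkeeping of the staircase: verifying that each step strictly decreases the chosen size parameter, that running all the ``lower \(\max p\)'' steps before the ``raise \(\min p\)'' ones prevents interference, and that the base cases (support of size \(\le 1\)) already give the conclusion directly. The conceptual content is entirely in the corner move, which is the single place where the hypothesis \(\im\partialh_{p+1,q}\cap\im\partialv_{p,q+1}=\im(\partialh_{p+1,q}\partialv_{p+1,q+1})\) enters; I expect that step, together with the sign and anticommutativity bookkeeping it involves, to be the main fiddly point.
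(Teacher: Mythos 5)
Your proof is correct, and the essential engine (what you call the ``corner move'', which is exactly where the hypothesis enters) is the same one the paper uses. What differs is the organization. For injectivity, the paper never runs a staircase: it writes $x=\partialh y_{p+1,q}+\partialv y_{p,q+1}$, observes from the cycle condition at $(p+1,q-1)$ that $\partialv y_{p+1,q}=-\partialh y_{p+2,q-1}\in\im\partialh\cap\im\partialv$, and applies the hypothesis \emph{once at each of the two adjacent bidegrees} to replace $y_{p+1,q}$ by $y_{p+1,q}-\partialh z\in\ker\partialv$ and $y_{p,q+1}$ by $y_{p,q+1}-\partialv w\in\ker\partialh$ — no need to clear out the rest of the chain, since the equality $x=\partialh(\cdot)+\partialv(\cdot)$ is what you need, not $\partial y'=x$. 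For surjectivity the paper similarly constructs the correcting chain $z=(z_{p,n-p+1})_p$ in one parallel stroke and takes $x=y-\partial z$, each $x_{p,n-p}$ landing in $\ker\partialh\cap\ker\partialv$ by the same computation done at every bidegree simultaneously. Your sequential ``peel the top term'' induction accomplishes the same thing with more bookkeeping, and the two-phase staircase in part (1) is a genuine detour: you clear the whole anti-diagonal, whereas the paper touches only $(p+1,q)$ and $(p,q+1)$. One small exposition point in your argument: the induction measure in part (2) should really be $\max p$ of the support rather than $\#\mathrm{supp}$, since a corner move can create a new nonzero term at $(a-1,b+1)$; this only makes the bound tight rather than slack, and both measures do strictly decrease, but $\max p$ is the cleaner invariant to quote. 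Both proofs work; the paper's is worth internalizing because the insight that you need to modify only the two bidegrees bordering $(p,q)$ for injectivity removes essentially all the staircase bookkeeping you (correctly) flagged as the fiddly part.
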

\begin{proof}
  To prove \cref{item:varphi_inj}, take any element \([x]\in\ker\varphi_{pq}\).
  Then there is a sequence
  \((y_{i,p+q-i+1})_i \in \tot_{p+q+1}\dcpx\)
  of \(y_{i,p+q-i+1} \in \dcpx_{i,p+q-i+1}\) satisfying
  \begin{equation*}
    \partialh y_{i+1,p+q-i} + \partialv y_{i,p+q-i+1} =
    \begin{cases}
      x & \text{if \(i=p\)} \\
      0 & \text{otherwise}
    \end{cases}
  \end{equation*}
  for all \(i\in\Z\).
  Since
  \begin{math}
    \partialv y_{p+1,q}
    = \partialh y_{p+2,q-1}
    \in \im\partialh \cap \im\partialv
    =\im(\partialh\partialv)
  \end{math},
  there is an element \(z\in \dcpx_{p+2,q}\) such that
  \(\partialv\partialh z = \partialv y_{p+1,q}\).
  Similarly we have \(w\in\dcpx_{p,q+2}\) such that
  \(\partialh\partialv w = \partialh y_{p,q+1}\).
  Hence we have
  \begin{equation*}
    x = \partialh(y_{p+1,q} - \partialh z) + \partialv(y_{p,q+1}-\partialv w)
    \in \partialh(\ker\partialv) + \partialv(\ker\partialh)
  \end{equation*}
  and thus \([x] = 0 \in \homolk{pq}{\dcpx}\).
  This proves \cref{item:varphi_inj}.

  To prove \cref{item:varphi_surj},
  take any element \([\mathbf{y}] \in H_n(\tot\dcpx)\).
  Here \(\mathbf{y}=(y_{p,n-p})_p\in\tot_n\dcpx\) is
  a sequence of elements \(y_{p,n-p}\in\dcpx_{p,n-p}\) such that
  \(y_{p,n-p} = 0\) except for finitely many \(p\in\Z\) and
  \(\partialh y_{p,n-p} + \partialv y_{p-1,n-p+1} = 0\) for all \(p\in\Z\).
  For each \(p\in\Z\), we have
  \begin{math}
    \partialh y_{p,n-p}
    = -\partialv y_{p-1,n-p+1}
    \in \im\partialh \cap \im\partialv
    = \im(\partialh\partialv)
  \end{math}
  and hence there is an element \(z_{p,n-p+1}\in\dcpx_{p,n-p+1}\) such that
  \(\partialh\partialv z_{p,n-p+1} = \partialh y_{p,n-p} = -\partialv y_{p-1,n-p+1}\).
  Note that we can take \((z_{p,n-p+1})_p\)
  so that \(z_{p,n-p+1} = 0\) except for finitely many \(p\in\Z\).
  Now we define
  \(x_{p,n-p} = y_{p,n-p} - \partialh z_{p+1,n-p} - \partialv z_{p,n-p+1} \in \dcpx_{p,n-p}\)
  for each \(p\in\Z\).
  Then a straightforward computation shows that
  \([x_{p,n-p}] \in \homolk{p,n-p}{\dcpx}\) and
  \(\sum_p\varphi_{p,n-p}[x_{p,n-p}] = [\mathbf{y}] \in H_n(\tot\dcpx)\),
  which completes the proof.
\end{proof}

\begin{lem}
  \label{lem:homolk_vanish}
  Fix \(s,t\in\Z\).
  Assume that \(\dcpx\) satisfies the assumption in \cref{lem:varphi_inj_surj} and either of the following:
  \begin{itemize}
    \item
      \(\ker\partialh_{st} = \im\partialh_{s+1,t}\) and
      \(\ker\partialh_{s+1,t-1} = \im\partialh_{s+2,t-1}\)
    \item
      \(\ker\partialv_{st} = \im\partialv_{s,t+1}\) and
      \(\ker\partialv_{s-1,t+1} = \im\partialv_{s-1,t+2}\)
  \end{itemize}
  Then we have \(\homolk{st}{\dcpx} = 0\).
\end{lem}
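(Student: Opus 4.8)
The plan is to reduce to a single case by symmetry and then run a short diagram chase that leans on the hypothesis of \cref{lem:varphi_inj_surj}. Transposing the double complex (that is, swapping \(\partialh \leftrightarrow \partialv\) and \(p \leftrightarrow q\)) exchanges the two bulleted alternatives while preserving both the conclusion \(\homolk{st}{\dcpx} = 0\) and the standing hypothesis \(\im\partialh_{p+1,q}\cap\im\partialv_{p,q+1} = \im(\partialh_{p+1,q}\partialv_{p+1,q+1})\), so it suffices to treat the first alternative: \(\ker\partialh_{st} = \im\partialh_{s+1,t}\) and \(\ker\partialh_{s+1,t-1} = \im\partialh_{s+2,t-1}\).

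Fix a class in \(\homolk{st}{\dcpx}\), represented by \(x \in \ker\partialh_{st}\cap\ker\partialv_{st}\); the goal is to exhibit \(x\) as an element of \(\partialh_{s+1,t}(\ker\partialv_{s+1,t})\), which is part of the subgroup one divides out by. First I would use \(x \in \ker\partialh_{st} = \im\partialh_{s+1,t}\) to choose \(y \in \dcpx_{s+1,t}\) with \(\partialh y = x\). Since \(\partialh\partialv = -\partialv\partialh\) and \(\partialv x = 0\), the element \(\partialv y \in \dcpx_{s+1,t-1}\) is killed by \(\partialh\), hence lies in \(\ker\partialh_{s+1,t-1} = \im\partialh_{s+2,t-1}\); it also lies in \(\im\partialv_{s+1,t}\) tautologically. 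Applying the hypothesis of \cref{lem:varphi_inj_surj} with \((p,q) = (s+1,t-1)\) gives \(\partialv y \in \im(\partialh_{s+2,t-1}\partialv_{s+2,t})\), so I can pick \(w \in \dcpx_{s+2,t}\) with \(\partialh\partialv w = \partialv y\). Replacing \(y\) by \(y' := y + \partialh w\), we get \(\partialh y' = x\) (as \(\partialh\partialh = 0\)) and \(\partialv y' = \partialv y + \partialv\partialh w = \partialv y - \partialh\partialv w = 0\); thus \(x = \partialh y'\) with \(y' \in \ker\partialv_{s+1,t}\), so the class of \(x\) vanishes and \(\homolk{st}{\dcpx} = 0\).

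I do not expect a genuine obstacle here: the argument is a three-step chase. The only things needing care are the sign bookkeeping forced by the anticommutativity \(\partialh\partialv + \partialv\partialh = 0\) (it is precisely this that makes the correcting term \(+\partialh w\) rather than \(-\partialh w\)) and verifying that the index shift lands exactly on the instance \((p,q) = (s+1,t-1)\) of the hypothesis of \cref{lem:varphi_inj_surj}. Together with \cref{lem:varphi_inj_surj}, this lemma is what will be assembled into \cref{prop:homolk_tot} and then fed into the proof of \cref{thm:resol_C2m}.
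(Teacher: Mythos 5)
Your argument is correct and is essentially the paper's own proof: lift \(x\) along \(\partialh\), use the vanishing hypothesis to place \(\partialv y\) in \(\im\partialh\cap\im\partialv=\im(\partialh\partialv)\), and correct \(y\) by a term of the form \(\partialh w\) to land in \(\ker\partialv\), with the second bullet handled by the transposition symmetry. Your sign bookkeeping with the anticommutativity is careful and matches the intended computation.
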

\begin{proof}
  Here we give a proof under the first assumption.
  Take any element \([x] \in \homolk{st}{\dcpx}\).
  Since \(x\in\ker\partialh_{st} = \im\partialh_{s+1,t}\),
  there is an element \(y\in\dcpx_{s+1,t}\) such that \(\partialh y = x\).
  Then we have
  \(\partialv y \in \ker\partialh_{s+1,t-1} = \im\partialh_{s+2,t-1}\) and hence
  \(\partialv y \in \im\partialh \cap \im \partialv = \im(\partialh\partialv)\).
  Thus there is an element \(z\in\dcpx_{s+2,t}\) such that \(\partialv\partialh z = \partialv y\).
  Now we have \(x = \partialh(y - \partialv z) \in \partialh(\ker\partialv)\),
  which completes the proof.
\end{proof}

Now we are ready to prove

\begin{prop}
  \label{prop:homolk_tot}
  Assume that the double complex \(\dcpx\) satisfies the assumption in \cref{lem:varphi_inj_surj} and
  \begin{itemize}
    \item \(\ker\partialh_{pq} = \im\partialh_{p+1,q}\) if \(p > q\), and
    \item \(\ker\partialv_{pq} = \im\partialv_{p,q+1}\) if \(q > p\)
  \end{itemize}
  for all \(p,q\in\Z\).
  Then, for all \(k \in \Z\), we have
  \begin{enumerate}
    \item \(H_{2k-1}(\tot \dcpx) = 0\) and
    \item \(\varphi_{kk}\colon \homolk{kk}{\dcpx} \to H_{2k}(\tot \dcpx)\) is an isomorphism.
  \end{enumerate}
\end{prop}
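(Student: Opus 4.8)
The plan is to read off both assertions purely formally from \cref{lem:varphi_inj_surj,lem:homolk_vanish}, which already carry all the substantive homological algebra. By \cref{lem:varphi_inj_surj}, each map $\varphi_{pq}\colon\homolk{pq}{\dcpx}\to H_{p+q}(\tot\dcpx)$ is injective, and $\sum_{p+q=n}\varphi_{pq}\colon\bigoplus_{p+q=n}\homolk{pq}{\dcpx}\to H_n(\tot\dcpx)$ is surjective. So the whole problem reduces to identifying which groups $\homolk{pq}{\dcpx}$ vanish.

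The key step is to prove $\homolk{pq}{\dcpx}=0$ whenever $p\neq q$. Suppose $p>q$; then also $p+1>q-1$, so the standing off-diagonal hypothesis gives $\ker\partialh_{pq}=\im\partialh_{p+1,q}$ \emph{and} $\ker\partialh_{p+1,q-1}=\im\partialh_{p+2,q-1}$. These are exactly the two equalities required by the first bullet of \cref{lem:homolk_vanish} (with $(s,t)=(p,q)$), so $\homolk{pq}{\dcpx}=0$. If instead $q>p$, then $q+1>p-1$, the hypothesis yields $\ker\partialv_{pq}=\im\partialv_{p,q+1}$ and $\ker\partialv_{p-1,q+1}=\im\partialv_{p-1,q+2}$, and the second bullet of \cref{lem:homolk_vanish} gives $\homolk{pq}{\dcpx}=0$.

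From here the two conclusions are immediate. If $n=2k-1$ is odd, then $p+q=n$ forces $p\neq q$, so $\homolk{pq}{\dcpx}=0$ for all such $(p,q)$, and surjectivity of $\sum_{p+q=n}\varphi_{pq}$ then forces $H_{2k-1}(\tot\dcpx)=0$. If $n=2k$ is even, the only summand of $\bigoplus_{p+q=2k}\homolk{pq}{\dcpx}$ that can be nonzero is the one at $(k,k)$, so the surjection $\sum_{p+q=2k}\varphi_{pq}$ coincides with $\varphi_{kk}$; since $\varphi_{kk}$ is also injective by \cref{lem:varphi_inj_surj}, it is an isomorphism.

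I do not expect any real obstacle here: the argument is bookkeeping once the lemmas are available. The one place to be careful is checking that the off-diagonal exactness assumption, which is stated at a single index pair, actually applies at the shifted pairs $(p+1,q-1)$ and $(p-1,q+1)$ appearing in \cref{lem:homolk_vanish}; this is handled by the elementary implications $p>q\Rightarrow p+1>q-1$ and $q>p\Rightarrow q+1>p-1$, which keep the shifted indices strictly on the correct side of the diagonal.
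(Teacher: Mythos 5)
Your proof is correct and follows exactly the paper's route: apply \cref{lem:homolk_vanish} to kill the off-diagonal groups $\homolk{pq}{\dcpx}$ for $p \neq q$, then invoke the injectivity and surjectivity statements of \cref{lem:varphi_inj_surj} to obtain both conclusions. The only difference is that you spell out the index-shift bookkeeping ($p>q \Rightarrow p+1>q-1$, etc.) that the paper leaves implicit, which is a helpful verification rather than a departure.
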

\begin{proof}
  By \cref{lem:homolk_vanish},
  we have \(\homolk{pq}{\dcpx} = 0\) for all \(p,q\in\Z\) with \(p\neq q\).
  Hence \cref{lem:varphi_inj_surj} completes the proof.
\end{proof}

\bibliographystyle{alpha_plainsort}
\bibliography{references}

\begin{thebibliography}{DIMZ24}

\bibitem[AF74]{AF}
Frank~W. Anderson and Kent~R. Fuller.
\newblock {\em Rings and categories of modules}, volume~13 of {\em Grad. Texts Math.}
\newblock Springer, Cham, 1974.

\bibitem[Asa23]{As}
Yasuhiko Asao.
\newblock Magnitude homology and path homology.
\newblock {\em Bull. Lond. Math. Soc.}, 55(1):375--398, 2023.

\bibitem[AI24]{AI}
Yasuhiko Asao and Sergei~O. Ivanov.
\newblock Magnitude homology is a derived functor, 2024, arXiv:2402.14466.

\bibitem[BK20]{BK}
Rémi Bottinelli and Tom Kaiser.
\newblock Magnitude homology, diagonality, medianness, k\"unneth and mayer-vietoris, 2020, arXiv:2003.09271.

\bibitem[BCN89]{Brouwer}
Andries~E. Brouwer, Arjeh~M. Cohen, and Arnold Neumaier.
\newblock {\em Distance-regular graphs}, volume~18 of {\em Ergeb. Math. Grenzgeb., 3. Folge}.
\newblock Berlin etc.: Springer-Verlag, 1989.

\bibitem[Dal19]{Da}
C.~Dalf{\'o}.
\newblock A survey on the missing {Moore} graph.
\newblock {\em Linear Algebra Appl.}, 569:1--14, 2019.

\bibitem[DIMZ24]{I}
Shaobo Di, Sergei~O. Ivanov, Lev Mukoseev, and Mengmeng Zhang.
\newblock On the path homology of cayley digraphs and covering digraphs, 2024, arXiv:2305.15683.

\bibitem[Gu18]{Gu}
Yuzhou Gu.
\newblock Graph magnitude homology via algebraic morse theory, 2018, arXiv:1809.07240.

\bibitem[HGK04]{HGK}
Michiel Hazewinkel, Nadiya Gubareni, and V.~V. Kirichenko.
\newblock {\em Algebras, rings and modules. {Vol}. 1.}, volume 575 of {\em Math. Appl., Dordr.}
\newblock Dordrecht: Kluwer Academic Publishers, 2004.

\bibitem[HR24]{HR}
Richard Hepworth and Emily Roff.
\newblock Bigraded path homology and the magnitude-path spectral sequence, 2024, arXiv:2404.06689.

\bibitem[HW17]{HW}
Richard Hepworth and Simon Willerton.
\newblock Categorifying the magnitude of a graph.
\newblock {\em Homology Homotopy Appl.}, 19(2):31--60, 2017.

\bibitem[IPRZ21]{S}
Sergei~O. Ivanov, Fedor Pavutnitskiy, Vladislav Romanovskii, and Anatolii Zaikovskii.
\newblock On homology of {Lie} algebras over commutative rings.
\newblock {\em J. Algebra}, 586:99--139, 2021.

\bibitem[Jub18]{J}
Benoit Jubin.
\newblock On the magnitude homology of metric spaces, 2018, arXiv:1803.05062.

\bibitem[KY21]{KY}
Ryuki Kaneta and Masahiko Yoshinaga.
\newblock Magnitude homology of metric spaces and order complexes.
\newblock {\em Bull. Lond. Math. Soc.}, 53(3):893--905, 2021.

\bibitem[Lei19]{L}
Tom Leinster.
\newblock The magnitude of a graph.
\newblock {\em Math. Proc. Camb. Philos. Soc.}, 166(2):247--264, 2019.

\bibitem[LS21]{LS}
Tom Leinster and Michael Shulman.
\newblock Magnitude homology of enriched categories and metric spaces.
\newblock {\em Algebr. Geom. Topol.}, 21(5):2175--2221, 2021.

\bibitem[Ste21]{St}
Jonas Stelzig.
\newblock On the structure of double complexes.
\newblock {\em J. Lond. Math. Soc., II. Ser.}, 104(2):956--988, 2021.

\bibitem[TY23]{TY}
Yu~Tajima and Masahiko Yoshinaga.
\newblock Magnitude homology of graphs and discrete {Morse} theory on {Asao}-{Izumihara} complexes.
\newblock {\em Homology Homotopy Appl.}, 25(1):331--343, 2023.

\bibitem[Wei95]{Wei}
Charles~A. Weibel.
\newblock {\em An introduction to homological algebra. 1st pbk-ed}, volume~38 of {\em Camb. Stud. Adv. Math.}
\newblock Cambridge: Cambridge Univ. Press, 1st pbk-ed. edition, 1995.

\end{thebibliography}

\end{document}